\documentclass[a4paper,11pt,reqno]{amsart}

\usepackage[utf8]{inputenc}
\usepackage[margin=1in]{geometry}

\usepackage{amsmath}
\usepackage{amssymb}
\usepackage{amscd}
\usepackage{mathtools}
\usepackage{comment}
\mathtoolsset{showonlyrefs}

\usepackage{bbm}

\usepackage{graphicx}

\usepackage{enumitem}

\usepackage[dvipsnames]{xcolor}
\definecolor{cornellred}{rgb}{0.7, 0.11, 0.11}

\usepackage[colorlinks=true, urlcolor=RoyalBlue, citecolor=RoyalBlue, linkcolor=RoyalBlue, linktocpage, pdfpagelabels, bookmarksnumbered, bookmarksopen]{hyperref}

\newtheorem{theorem}{Theorem}[section]
\newtheorem{lemma}[theorem]{Lemma}
\newtheorem{proposition}[theorem]{Proposition}

\theoremstyle{definition}
\newtheorem{definition}[theorem]{Definition}
\newtheorem{remark}[theorem]{Remark}

\numberwithin{equation}{section}

\allowdisplaybreaks

\begin{document}

\title[Klein--Gordon--Born--Infeld equations]{Klein--Gordon--Born--Infeld equations driven \\ by mixed local--nonlocal operators}

\author[N.~Cangiotti]{Nicol\`o Cangiotti}
\author[M.~Caponi]{Maicol Caponi}
\author[A.~Maione]{Alberto Maione}
\author[E.~Vitillaro]{Enzo Vitillaro}

\address[N.~Cangiotti]{ISIS Paolino d'Aquileia\newline\indent
Via Istituto Tecnico Agrario 42\newline\indent
33043 Cividale del Friuli, Udine, Italy}
\email{nicolocangiotti@paolinodaquileia.edu.it}

\address[M.~Caponi]
{Dipartimento di Ingegneria e Scienze dell'Informazione e Matematica\newline\indent
Universit\`a degli Studi dell'Aquila\newline\indent
Via Vetoio 1, Coppito\newline\indent
67100 L'Aquila, Italy}
\email{maicol.caponi@univaq.it}

\address[A.~Maione]{Dipartimento di Matematica\newline\indent
Politecnico di Milano\newline\indent
Via Bonardi 9, Campus Leonardo\newline\indent
20133 Milan, Italy}
\email{alberto.maione@polimi.it}

\address[E.~Vitillaro]{Dipartimento di Matematica e Informatica\newline\indent
Universit\`a degli Studi di Perugia\newline\indent
Via Luigi Vanvitelli 1\newline\indent
06123 Perugia, Italy}
\email{enzo.vitillaro@unipg.it}

\subjclass[2020]{
35A01, 
35A15, 
35J50, 
35J60, 
35Q60, 
35R11} 

\keywords{
Born--Infeld electrodynamics,
Klein--Gordon--Born--Infeld system,
nonlocal operators,
fractional operators,
variational methods,
critical point theory}

\begin{abstract}
In this paper we study a class of Klein--Gordon--Born--Infeld systems in which the classical Laplacian appearing in the matter equation is replaced by a mixed local--nonlocal operator, whose nonlocal part is allowed to be nonpositive definite according to a real parameter. Both the approximate and the exact Born--Infeld field equations are considered in parallel. After making the functional framework explicit, we identify a range of real parameters for which infinitely many radially symmetric standing waves exist, obtained as Mountain Pass critical points of the associated reduced energy functionals. The mixed local--nonlocal operator reduces to the classical Laplacian as the nonlocal weight vanishes, and is motivated, in the spirit of effective field theories, by nonlocal corrections to the matter dispersion relation.
\end{abstract}

\maketitle

\section{Introduction and main result} \label{sec:intro}

\subsection{The physical models}

Classical electromagnetism, as formulated by Maxwell~\cite{Maxwell1865}, is a linear theory and, as such, it assigns an infinite energy to the field generated by a point charge: the electrostatic energy density of a Coulomb field behaves like $|x|^{-4}$ near the source and therefore is not integrable. Indeed, for a point charge $e$ placed at the origin, Maxwell's electrostatics yields the Coulomb potential $\phi(x)=e/|x|$ and the field
\begin{equation}\label{eq:coulomb}
\mathbf E(x)=-\nabla\phi(x)=\frac{e}{|x|^2}\cdot\frac{x}{|x|},
\end{equation}
whose energy, obtained by integrating the density $\rho_E=\frac{1}{8\pi}|\mathbf E|^2$, is
\begin{equation}\label{eq:Wdiv}
W=\frac{e^2}{8\pi}\int_{\mathbb{R}^3}\frac{{\rm d}x}{|x|^4}=\frac{e^2}{2}\int_0^\infty\frac{{\rm d}r}{r^2}=\infty.
\end{equation}
This divergence~\eqref{eq:Wdiv} lies at the heart of the classical models of the electron.
In the classical models going back to Lorentz~\cite{Lorentz1904}, the field energy contributes to the inertial mass, 
\[
m_{\mathrm{tot}}=m_{\mathrm{mech}}+\frac{W}{c^2},
\] 
so that $W=\infty$ would force the absurd conclusion $m_{\mathrm{tot}}=\infty$.
Introducing a cutoff radius $a$, that is, truncating the energy integral~\eqref{eq:Wdiv} at $r=a$, already makes it finite, $W(a)=e^2/(2a)$; the particular value $r_0$ is then fixed by identifying the field energy with the electron rest energy, $W(r_0)=m_ec^2$, which gives $r_0=e^2/(2m_ec^2)\approx1.4\times10^{-15}\,\mathrm{m}$, half the standard \emph{classical electron radius}, whose SI expression is $r_e=e^2/(4\pi\epsilon_0 m_ec^2)\approx2.8\times10^{-15}\,\mathrm{m}$~\cite{CODATA2024}, a purely phenomenological cutoff foreign to the underlying field theory.

This picture is, moreover, in tension with experiment: to present knowledge the electron is an elementary, point-like particle, with no substructure observed down to scales far below $r_0$, in accordance with the collider compositeness searches collected by the Particle Data Group~\cite{PDG2024}. The agreement between its anomalous magnetic moment, measured in a one-electron quantum cyclotron, and quantum electrodynamics bounds any internal structure of the electron~\cite{Dehmelt1988}, while in deep-inelastic scattering at HERA the electron is treated as a point-like probe, there bounding the effective charge radius of the quarks~\cite{Chekanov2004}. 

An analogous ultraviolet pathology, now only logarithmic, resurfaces in quantum electrodynamics, where the electron self-energy is ultraviolet divergent~\cite{Weisskopf1939} and is handled only through renormalization, which absorbs the divergences into a redefinition of the physical parameters; this program runs from the covariant formulation of quantum electrodynamics~\cite{Feynman1949} to the renormalizability of gauge theories~\cite{tHooft1971}. 

The above-described long-standing inconsistency of the linear theory was the original motivation of Born and Infeld~\cite{BornInfeld1934}, who in 1934 proposed a nonlinear electrodynamics in which the field strength is bounded a priori by a parameter $b>0$.

In the purely electrostatic regime, with $\mathbf E=-\nabla\phi$, their Lagrangian density reads
\begin{equation}\label{eq:LBI}
\mathcal{L}_{BI}=\frac{b^2}{4\pi}\left(1-\sqrt{1-\frac{|\mathbf E|^2}{b^2}}\right),
\end{equation}
and the resulting theory enjoys two remarkable features: the electric field saturates at the value $b$, and the self-energy of a point charge becomes finite. We refer to Gibbons and Rasheed~\cite{GibbonsRasheed1995} for a more detailed account of this model.
The density~\eqref{eq:LBI} is real only for $|\mathbf E|\le b$; expanding the square root for $|\mathbf E|\ll b$,
\begin{equation}\label{eq:taylor}
\sqrt{1-x}=1-\frac{x}{2}-\frac{x^2}{8}+O(x^3),\quad x=\frac{|\mathbf E|^2}{b^2},
\end{equation}
gives $\mathcal{L}_{BI}=\frac{1}{8\pi}|\mathbf E|^2+\frac{1}{32\pi b^2}|\mathbf E|^4+O(b^{-4})$, whose leading term is exactly the Maxwell Lagrangian.

This theory is a genuine nonlinear extension of electromagnetism, departing from it only in the regime of strong fields.
Two structural consequences follow. First, the energy density associated with~\eqref{eq:LBI},
\begin{equation}\label{eq:rhoE}
\rho_E=\frac{b^2}{4\pi}\left(\frac{1}{\sqrt{1-|\mathbf E|^2/b^2}}-1\right),
\end{equation}
together with the nonlinear Gauss law
\begin{equation}\label{eq:gauss-nl}
\operatorname{div}\left(\frac{\mathbf E}{\sqrt{1-|\mathbf E|^2/b^2}}\right)=4\pi\rho,
\end{equation}
forces the saturation $|\mathbf E|\le b$: the energy density diverges as the field strength approaches $b$. Second, for a point charge the radial field obtained from~\eqref{eq:gauss-nl} satisfies
\begin{equation}\label{eq:asympt}
E(r)\sim\frac{e}{r^2}\quad(r\to\infty),\qquad |E(r)|\to b\quad(r\to0^+),
\end{equation}
so that the Coulomb behaviour is recovered at infinity while the field magnitude saturates at the value $b$ at the source. The corresponding self-energy is finite,
\begin{equation}\label{eq:WBI}
W_{BI}=\frac{\Gamma(1/4)^2}{6\sqrt{\pi}}\,b^{1/2}\,|e|^{3/2}<\infty.
\end{equation}
For further details see~\cite{BdAP16,GibbonsRasheed1995}. Thus, the Born--Infeld parameter $b$ regularizes the theory intrinsically, without any \emph{ad hoc} cutoff.

In the electrostatic case $\mathbf E=-\nabla\phi$, the Euler--Lagrange equation of~\eqref{eq:LBI} with a charge density $\rho$ is the \emph{exact} Born--Infeld equation
\begin{equation}\label{eq:exactBI}
-\operatorname{div}\left(\frac{\nabla\phi}{\sqrt{1-|\nabla\phi|^2/b^2}}\right)=4\pi\rho.
\end{equation}
Retaining only the first correction in~\eqref{eq:taylor} produces the approximate electrostatic Lagrangian
\begin{equation}\label{eq:approxLag}
\mathcal{L}_{BI}^{(2)}=\frac{1}{8\pi}|\nabla\phi|^2+\frac{\beta}{16\pi}|\nabla\phi|^4,\quad \beta\coloneqq\frac{1}{2b^2},
\end{equation}
whose Euler--Lagrange equation is an elliptic perturbation of the Poisson equation
\begin{equation}\label{eq:approxBI}
-\Delta\phi-\beta\Delta_4\phi=4\pi\rho,
\end{equation}
where 
\begin{equation}\label{eq:4_laplaciano}
\Delta_4\phi\coloneqq\operatorname{div}(|\nabla\phi|^2\nabla\phi).
\end{equation}
Equation~\eqref{eq:approxBI} keeps the leading nonlinear effect of the theory in moderately strong fields while remaining within reach of classical variational methods. Equation~\eqref{eq:exactBI} retains the full geometric content (and the field saturation) at the price of a singular, non-uniformly elliptic operator. Both regimes are physically meaningful, and we shall treat them in parallel.

\subsection{The mathematical problems and literature overview}

Coupling the electrostatic potential $\phi$ with a nonlinear Klein--Gordon field $\psi$ through the minimal coupling rule and looking for standing waves leads to the Klein--Gordon--Born--Infeld system, in which a nonlinear Klein--Gordon equation for $u$ is complemented by the field equation~\eqref{eq:approxBI}, or~\eqref{eq:exactBI}, with charge density $\rho=-e(\omega+e\phi)u^2$. Here, $u$ is real valued, $\omega>0$ is the frequency, $m>0$ is the mass of the field, and $e=\pm1$ is the charge.

In this paper we couple the Born--Infeld electrostatic field with matter fields depending on \emph{mixed local--nonlocal} operators. More precisely, for $s\in(0,1)$ we consider the family of operators
\begin{equation}\label{eq:Lalpha-intro}
\mathcal{L}_\alpha\coloneqq-\Delta+\alpha(-\Delta)^s,
\quad \alpha\in\mathbb{R},
\end{equation}
where $(-\Delta)^s$ denotes the fractional Laplacian. In accordance with the recent literature on mixed operators, we omit the dependence on $s$ in the notation $\mathcal{L}_\alpha$. 

Looking for standing waves
\[
\psi(x,t)=u(x)e^{i\omega t}
\]
of a nonlinear Klein--Gordon field minimally coupled with the electrostatic potential $\phi$, choosing $e=1$, and normalizing the physical constants, we are led to study, for $p\in(2,6)$ (with $6=2^*$ being the critical Sobolev exponent in dimension three), the two systems

\bigskip

\begin{itemize}
    \item \textbf{Approximate Klein--Gordon--Born--Infeld}
    \begin{equation}\label{eq:KGBI}\tag{$\mathrm{KGBI}_{\approx}$}
        \begin{cases}
        \mathcal{L}_\alpha u+m^2u-(\omega+\phi)^2u=|u|^{p-2}u &\text{in $\mathbb{R}^3$},\\[1mm]
        \Delta\phi+\beta\Delta_4\phi=4\pi(\omega+\phi)u^2 &\text{in $\mathbb{R}^3$},
        \end{cases}
    \end{equation}
    
    \bigskip
    
    \item \textbf{Exact Klein--Gordon--Born--Infeld}
    \begin{equation}\label{eq:NKGBI}\tag{$\mathrm{KGBI}$}
        \begin{cases}
        \mathcal{L}_\alpha u+m^2u-(\omega+\phi)^2u=|u|^{p-2}u &\text{in $\mathbb{R}^3$},\\[1mm]
        \operatorname{div}\left(\frac{\nabla\phi}{\sqrt{1-|\nabla\phi|^2/b^2}}\right)=4\pi(\omega+\phi)u^2 &\text{in $\mathbb{R}^3$},
        \end{cases}
    \end{equation}
\end{itemize}
where $\beta>0$, $b>0$, and $\Delta_4\phi$ is defined in~\eqref{eq:4_laplaciano}.

Over the last two decades, the variational study of Born--Infeld electrostatics and of its coupling with a matter field has been developed along two complementary lines, according to the two problems above. 

In the first one, the field equation is an elliptic perturbation of the Poisson equation; its solution is the minimizer of a smooth, strictly convex and coercive functional on a suitable reflexive Banach space. The functional groundwork for the electrostatic problem was laid by Fortunato, Orsina and Pisani~\cite{FOP02}, who introduced the natural function space for the potential and proved existence and uniqueness of the field generated by a given charge density. Building on it, d'Avenia and Pisani~\cite{DAP02} coupled the approximate Born--Infeld equation with a nonlinear Klein--Gordon field. They obtained, through an equivariant version of the Mountain Pass Theorem, infinitely many radially symmetric standing waves in equilibrium with their own electromagnetic field. Shortly afterwards, Mugnai~\cite{Mugnai04} established the same multiplicity result under general assumptions, by means of a $\mathbb{Z}_2$--Mountain Pass scheme and $L^\infty$ a priori estimates.

The second line addresses the \emph{exact} model, governed by the genuinely nonlinear, singular operator appearing in~\eqref{eq:exactBI} and~\eqref{eq:NKGBI}. The analysis is considerably more delicate. Indeed, the exact equation~\eqref{eq:exactBI} is governed by an operator that becomes singular as $|\nabla\phi|\to b$, and its natural variational formulation is a constrained minimization over the convex set $\{|\nabla\phi|\le b\}$. Producing a minimizer is comparatively easy; the difficulty lies in proving that it is a weak solution of~\eqref{eq:exactBI}: one must show that the contact set $\{|\nabla\phi|=b\}$ is Lebesgue-negligible, handle the constraint when computing variations, and upgrade the resulting variational inequality into an equation tested against the whole working space, a step that requires explicit justification.

The coupling with a nonlinear Klein--Gordon field was first treated by Yu~\cite{Yu10}, who obtained existence of least-action solitary waves of the exact system in $\mathbb{R}^3$ and on bounded domains and analyzed their profiles. The critical Sobolev exponent case was later addressed by Teng and Zhang~\cite{TZ11}, and, more recently, Shen and Zhang~\cite{SZ25} obtained a solution of the exact system as a limit of mollified problems.

On the electrostatic side, Bonheure, d'Avenia and Pomponio~\cite{BdAP16} carried out the rigorous variational study of the equation driven by an extended charge density. Working with a relaxed formulation, they proved existence, uniqueness and finiteness of the energy of the potential. The passage from minimizer to weak solution was established in the quoted paper only for radially distributed or locally bounded charge densities (the solution being classical when the charge is smooth). Bonheure, Colasuonno and F\"oldes~\cite{BCF17} considered a superposition of finitely many point charges, giving quantitative conditions on the charges that guarantee that the energy minimizer solves the Euler--Lagrange equation, and describing the singularities of the potential at the charges according to their sign. Bonheure, d'Avenia, Pomponio and Reichel~\cite{BdAPR20} developed the corresponding potential theory, proving existence of equilibrium measures for charges supported on the boundary of a domain, the uniqueness of the (interior-constant) equilibrium potential, and a characterization of balls among smooth domains. The regularity of the minimizer under suitable integrability assumptions on the charge density was then studied by Bonheure and Iacopetti~\cite{BI19}, who proved its $W^{2,2}_{\mathrm{loc}}$ regularity and, for sufficiently small data, that it is a $C^{1,\gamma}_{\mathrm{loc}}$ weak solution.

In the coupled system the difficulty is compounded by the fact that the charge density $-(\omega+\phi)u^2$ is itself part of the unknown.
Accordingly, the available existence results work within a radial setting, both for the least-action solitary waves of Yu~\cite{Yu10} and for the solutions obtained as limits of mollified problems of Shen and Zhang~\cite{SZ25}.

To the best of our knowledge, a \emph{general} existence theory for the exact coupled Klein--Gordon--Born--Infeld system in $\mathbb{R}^3$, without symmetry assumptions, is still missing. We do not settle this general question here: as in~\cite{Yu10}, we work throughout in the radially symmetric setting, which both yields the exact field as a genuine weak solution and restores the compactness needed for the multiplicity argument.

The Born--Infeld coupling has been explored in the local setting in several further directions, such as nonhomogeneous problems (Chen and Li~\cite{CL13}), the two-dimensional case with critical exponential growth (Albuquerque, Chen and Li~\cite{ACL20}), and the Schr\"odinger--Born--Infeld system (Azzollini, Pomponio and Siciliano~\cite{APS19}), none of which allows for a nonlocal kinetic term, the specific feature of the present study.

The subject remains under active investigation: very recently, De Filippis and Mingione~\cite{DFM26} have studied the electrostatic equation for rough (unbounded) charges by means of nonlinear potential theory, extending the anti-peeling theory of Bartnik and Simon (a rigidity principle for the lightlike singular set of the potential) to its sharp integrability threshold and yielding endpoint regularity of the potential and of its gradient.

In a different direction, a robust line of research has recently been devoted to \emph{mixed local--nonlocal} operators of the form~\eqref{eq:Lalpha-intro}, in which the classical second-order term coexists with a fractional one. Beyond their intrinsic analytical interest, mixed local--nonlocal operators arise naturally, in the spirit of effective field theories, as a way to encode soft nonlocal corrections to the dispersion relation of a matter field. Their analytical theory has been developed in several directions, including regularity, maximum principles, and variational methods in the possibly nonpositive definite setting; we refer the interested reader to~\cite{Ambrosio26,BDVV22,MMV23} and also~\cite{Maione26} for a general overview.

Motivated by this perspective, the authors of the present paper have recently extended the classical Benci--Fortunato theory of charged solitary waves~\cite{BenciFortunato02} to the mixed local--nonlocal setting. More precisely, they studied both the Klein--Gordon--Maxwell~\cite{CCMV23} and the Schr\"odinger--Maxwell~\cite{CCMV24} systems, allowing in both cases the nonlocal part of~\eqref{eq:Lalpha-intro} to be nonpositive definite.

The main novelty of the present work is that the local kinetic operator $-\Delta$ of the matter equation is replaced by the mixed operator $\mathcal{L}_\alpha$ defined in~\eqref{eq:Lalpha-intro}.\footnote{We borrow the term \emph{kinetic} from field theory for the part of the operator that involves derivatives of the field and governs its propagation, as opposed to a potential term depending on the field alone. In Fourier variables (see Section~\ref{sec:3}), $-\Delta$ and $(-\Delta)^s$ act on plane waves as multiplication by $|\xi|^2$ and $|\xi|^{2s}$, so $\mathcal{L}_\alpha$ modifies the dispersion relation of the matter field, $|\xi|^2\mapsto|\xi|^2+\alpha|\xi|^{2s}$; in this sense it is a kinetic, not a potential, modification, despite the nonlocality of $(-\Delta)^s$ in position space. The time derivative, absent from $\mathcal{L}_\alpha$, is carried by the frequency $\omega$ through the standing-wave ansatz $\psi=u\,e^{i\omega t}$.}

Up to the standard sign reductions one obtains the reduced systems~\eqref{eq:KGBI} and~\eqref{eq:NKGBI} stated above. The replacement of $-\Delta$ by $\mathcal{L}_\alpha$ is suggested by the same effective-theory heuristics that underlie the Born--Infeld construction: a local field equation may be viewed as the low-energy limit of a more fundamental theory whose ultraviolet completion is unknown, and integrating out the unresolved microscopic degrees of freedom (the short-distance fluctuations of the matter field, or a microscopic granularity of spacetime) can dress the kinetic term with nonlocal corrections. 
The freedom to let $\alpha$ have either sign is itself physically meaningful (indeed, it distinguishes corrections that enhance from those that soften the nonlocal contribution to the dispersion) and the delicate regime $\alpha<0$, in which $\mathcal{L}_\alpha$ ceases to be positive definite, is also mathematically interesting, as we shall underline in the next section.

The two ingredients are, moreover, \emph{complementary}: they act on two different equations of the system and modify two distinct objects. The Born--Infeld nonlinearity lives in the field equation (the nonlinear Gauss law~\eqref{eq:gauss-nl}), capping the field strength at $|\mathbf E|\le b$ and rendering the self-energy of the charge finite, whereas the mixed operator $\mathcal{L}_\alpha$ lives in the matter (Klein--Gordon) equation, reshaping the dispersion relation of the matter field. Neither can be recovered from the other, and switching off either mechanism ($b\to\infty$ or $\alpha\to0$) leaves the other fully in force.

\subsection{Main result}

We treat the approximate and the exact models in parallel, within a single variational scheme, and keep the \emph{energy space} and the \emph{solution space} of the potential carefully distinct (see Remark~\ref{rem:spaces}), a distinction that makes the reduction rigorous in the exact case.

To state our result, we introduce the function $\alpha_0\colon(0,1)\times(0,\infty)\to(0,\infty)$,
\begin{equation}\label{eq:alpha0-intro}
\alpha_0(s,t)\coloneqq s^{-s}(1-s)^{s-1}t^{1-s},\quad (s,t)\in(0,1)\times(0,\infty),
\end{equation}
which already governed the admissible range of the nonlocal weight in~\cite{CCMV23,CCMV24}, and we set
\[
\Lambda\coloneqq\max\left\{\frac{2}{p-2},1\right\},\quad p\in(2,6).
\]
We also introduce some basic notation, starting from the homogeneous spaces
\begin{equation}\label{eq:D12}
D^{1,2}(\mathbb{R}^3)\coloneqq\overline{C_c^\infty(\mathbb{R}^3)}^{\,\|\nabla\,\cdot\,\|_2},\qquad
D(\mathbb{R}^3)\coloneqq\overline{C_c^\infty(\mathbb{R}^3)}^{\,\|\nabla\,\cdot\,\|_2+\|\nabla\,\cdot\,\|_4},
\end{equation}
the former being a Hilbert space with the norm $\|u\|_{D^{1,2}}\coloneqq\|\nabla u\|_2$, the latter a Banach space with the norm $\|u\|_D\coloneqq\|\nabla u\|_2+\|\nabla u\|_4$. 
We finally set
\begin{equation}\label{sbirulino}
D_\star(\mathbb{R}^3)\coloneqq D^{1,2}(\mathbb{R}^3)\cap W^{1,\infty}(\mathbb{R}^3).
\end{equation}
Our main result reads as follows.

\begin{theorem}[\bf Existence of infinitely many standing waves]\label{thm:main}
Let $p\in(2,6)$, $s\in (0,1)$, $\alpha\in\mathbb{R}$, $m>0$, and $\omega>0$. Assume that
\[
m^2>\Lambda\omega^2\quad\text{and}\quad \alpha>-\alpha_0(s,m^2-\Lambda\omega^2).
\]
Then the following results hold.
\begin{itemize}
\item[$(i)$] \emph{\bf (Approximate Klein--Gordon--Born--Infeld)} For every $\beta>0$, system~\eqref{eq:KGBI} admits infinitely many radially symmetric weak solutions $(u_n,\phi_n)_n\subset H^1(\mathbb{R}^3)\times D(\mathbb{R}^3)$.
\item[$(ii)$] \emph{\bf (Exact Klein--Gordon--Born--Infeld)} For every $b>0$, system~\eqref{eq:NKGBI} admits infinitely many radially symmetric weak solutions $(u_n,\phi_n)_n\subset H^1(\mathbb{R}^3)\times D_\star(\mathbb{R}^3)$, with
\[
\|\nabla\phi_n\|_\infty\le b\quad\text{and}\quad\frac{|\nabla \phi_n|^2}{\sqrt{1-|\nabla \phi_n|^2/b^2}}\in L^1(\mathbb{R}^3).
\]
\end{itemize}
\end{theorem}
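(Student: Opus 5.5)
The plan is a reduction to a single-variable functional, as in Benci--Fortunato and d'Avenia--Pisani, followed by a $\mathbb{Z}_2$-symmetric Mountain Pass theorem on the radial subspace $H^1_r(\mathbb{R}^3)$. For each $u\in H^1(\mathbb{R}^3)$ we first solve the field equation. In the approximate case, $\phi=\phi_u\in D(\mathbb{R}^3)$ is the unique minimizer of the strictly convex, coercive functional
\[
\phi\mapsto \frac{1}{8\pi}\int|\nabla\phi|^2+\frac{\beta}{16\pi}\int|\nabla\phi|^4+\frac12\int(2\omega\phi+\phi^2)u^2 .
\]
In the exact case, we minimize the analogous functional with the Born--Infeld density $b^2(1-\sqrt{1-|\nabla\phi|^2/b^2})$ over the convex set $\{\phi\in D^{1,2}:\|\nabla\phi\|_\infty\le b\}$. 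This set is the energy space; the solution space is $D_\star$. We also prove the standard properties $-\omega\le\phi_u\le0$, that $\phi_u$ is radial when $u$ is, and that $u\mapsto\phi_u$ is continuous and even. Inserting $\phi_u$ then gives the reduced functional
\[
J(u)=\frac12\langle\mathcal{L}_\alpha u,u\rangle+\frac12\int(m^2-\omega^2)u^2-\frac12\int\omega\phi_u u^2-(\text{field energy of }\phi_u)-\frac1p\int|u|^p .
\]
By an implicit-function argument in the approximate case, and by a direct variational computation in the exact radial case, $J$ is $C^1$. Its critical points $u$ give weak solutions $(u,\phi_u)$.

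For the exact model, the key step is that in the radial setting the minimizer is a genuine weak solution. We plan to argue as Bonheure--d'Avenia--Pomponio do for radial charges. The charge $\rho=(\omega+\phi_u)u^2$ is radial and in $L^1$, so the constrained minimizer satisfies
\[
\frac{r^2\phi'}{\sqrt{1-\phi'^2/b^2}}=4\pi\int_0^r\rho\,t^2\,dt .
\]
Since the right-hand side is finite for every $r>0$, this forces $|\phi'|<b$ a.e., so the contact set is null. The first integral identity then upgrades to the weak equation tested on all of $D_\star$. Testing with $\phi_u$ itself yields $|\nabla\phi_u|^2/\sqrt{1-|\nabla\phi_u|^2/b^2}\in L^1$.

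The mixed operator enters only through the quadratic form. By Plancherel,
\[
\langle\mathcal{L}_\alpha u,u\rangle+\mu\|u\|_2^2=\int(|\xi|^2+\alpha|\xi|^{2s}+\mu)|\hat u|^2 .
\]
For $\alpha<0$, Young's inequality gives $|\xi|^{2s}\le\varepsilon|\xi|^2+C_\varepsilon$, and the optimal choice yields the sharp bound $\min_{t\ge 0}(t+\alpha t^s)/(\dots)$. Hence the form is coercive on $H^1$ exactly when $|\alpha|<\alpha_0(s,\mu)$. We apply this with $\mu=m^2-\Lambda\omega^2$.

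The Mountain Pass geometry then follows. Near zero we use the coercivity and the fact that the $\phi_u$ terms are of higher order. Along finite-dimensional radial subspaces we use $p>2$ together with the bound $-\int\omega\phi_u u^2\le\omega^2\int u^2$.

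The main obstacle is the Palais--Smale condition and the boundedness of PS sequences, which is where $\Lambda$ appears. Here we compute $J(u_n)-\frac1p J'(u_n)u_n$ (or use $\frac12$ instead of $\frac1p$ when $p\ge4$). We use $J'(u)u$ together with the field equation tested by $\phi_u$ to eliminate the field-energy terms, and the sign $-\omega\le\phi_u\le 0$. This controls $\langle\mathcal{L}_\alpha u_n,u_n\rangle+(m^2-\Lambda\omega^2)\|u_n\|_2^2$ by $C+o(1)\|u_n\|$. When $p<4$, the case $\Lambda=2/(p-2)$ arises from the term $(\frac12-\frac1p)^{-1}$ times the $\omega\phi_u u^2$ contribution; we expect to have to treat this case carefully, in the style of d'Avenia--Pisani. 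Boundedness then follows from the coercivity above.

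Compactness comes from Strauss' compact embedding $H^1_r\hookrightarrow L^q$ for $q\in(2,6)$, together with continuity of $u\mapsto\phi_u$ in $L^q$-type norms, which gives strong convergence. Finally, the symmetric Mountain Pass theorem (Ambrosetti--Rabinowitz, Fountain version), applied to the even functional $J$ on $H^1_r$, gives infinitely many critical points. By the principle of symmetric criticality these are critical points on all of $H^1$, since the field equation was solved on the whole space.
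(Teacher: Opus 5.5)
Your architecture matches the paper's. You minimize the field energy to get $\phi_u$ with $-\omega\le\phi_u\le0$, reduce to $J$, get coercivity from the interpolation bound that defines $\alpha_0$, and bound $J(u_n)-\frac1pJ'(u_n)[u_n]$ using the field equation tested with $\phi_u$, splitting at $p=4$. You then conclude with Strauss compactness, symmetric criticality and a symmetric Mountain Pass theorem on $H^1_r(\mathbb{R}^3)$.

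The genuine gap is in the step the paper singles out as crucial for the exact model. You assert that the constrained minimizer satisfies $r^2\phi'/\sqrt{1-\phi'^2/b^2}=4\pi\int_0^r\rho\,t^2\,{\rm d}t$, and from this you deduce that the contact set is null. But this identity is an integrated Euler--Lagrange equation. For a minimizer over $M_b(\mathbb{R}^3)$ you cannot vary freely until you control $A=\{|\nabla\phi_u|=b\}$: admissible radial perturbations only give the identity a.e. on the complement of $A$. So the argument runs backwards. Moreover, the radial theory of Bonheure--d'Avenia--Pomponio treats a \emph{given} charge, whereas here $\rho=(\omega+\phi_u)u^2$ depends on the unknown.

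The paper proceeds in the opposite order.
\begin{itemize}
\item It compares $F_u(\Phi(u))\le F_u((1-t)\Phi(u))$. The Born--Infeld term then drops by at least $\frac{b^2}{4\pi}\sqrt{2t-t^2}\,|A|$, while the potential term changes by $O(t)$. This forces $|A|=0$, and it also gives $|\nabla\Phi(u)|^2/\sqrt{1-|\nabla\Phi(u)|^2/b^2}\in L^1$ and the variational inequality.
\item For radial $u$, it tests with radial $\Psi_n$ whose profile derivative vanishes on $\{|\phi_u'|\ge b-\frac1n\}$, so that $\Phi(u)\pm t\Psi_n\in M_b(\mathbb{R}^3)$ for small $|t|$. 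It then lets $n\to\infty$ using $|A|=0$.
\item It reaches every $\psi\in D_\star(\mathbb{R}^3)$ by testing the radial identity with $\Phi(u)$ and combining the result with the variational inequality.
\end{itemize}
Alternatively, you can rescue your own route by freezing the charge. By convexity, $\phi_u$ also minimizes $\psi\mapsto\frac{b^2}{4\pi}\int(1-\sqrt{1-|\nabla\psi|^2/b^2})\,{\rm d}x+\int\rho\psi\,{\rm d}x$ on $M_b(\mathbb{R}^3)$ with $\rho$ held fixed. The function defined by your explicit first integral is a weak solution of this frozen problem, hence its minimizer, and uniqueness identifies the two.

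Second, the implicit-function argument for $J\in C^1$ in the approximate case is not available. The linearization of $\phi\mapsto E_u'(\phi)$ is not an isomorphism of $D(\mathbb{R}^3)$ onto its dual. At $u=0$ it equals $-\frac{1}{4\pi}\Delta$, and $-\Delta\psi=\operatorname{div}F$ with $F\in L^{4/3}\setminus L^2$ (which is an element of the dual) in general has no solution in $D(\mathbb{R}^3)$.

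The paper needs no differentiability of $\Phi$. By minimality, $G_{u+tv}(\Phi(u+tv))-G_u(\Phi(u))$ lies between $G_{u+tv}(\Phi(u+tv))-G_u(\Phi(u+tv))$ and $G_{u+tv}(\Phi(u))-G_u(\Phi(u))$, and both of these are explicit. Continuity of $\Phi$ into $L^\infty_{\mathrm{loc}}$, together with $-\omega\le\Phi\le0$, then gives the derivative. This works for both models on all of $H^1(\mathbb{R}^3)$, which is what the principle of symmetric criticality requires; your exact-case computation covers only the radial subspace.

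Smaller points:
\begin{itemize}
\item Your displayed reduced functional is not right: after using the field equation, the leftover field term enters with a plus sign. Work with $J(u)=\frac12\mathcal{B}_\alpha(u,u)+\frac{m^2}{2}\|u\|_2^2-\frac1p\|u\|_p^p-G_u(\phi_u)$.
\item In the exact case, the Born--Infeld energy is not proportional to what testing with $\phi_u$ produces. Eliminating it therefore needs $1-\sqrt{1-t}\le t/(2\sqrt{1-t})$.
\item The parenthetical $J-\frac12J'(u_n)[u_n]$ only controls $\|u_n\|_p^p$ minus the field energy, so keep $\frac1p$ for every $p$.
\item The geometry you check fits Rabinowitz's symmetric Mountain Pass theorem with $X_1=\{0\}$, not the Fountain theorem.
\end{itemize}
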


The two conclusions of Theorem~\ref{thm:main} share the same hypotheses on $(p,s,\alpha,m,\omega)$ and differ only in the field equation and in the space of the potential $\phi$. When $\alpha=0$, both parts of Theorem~\ref{thm:main} reduce to the classical local setting, and it is therefore natural to compare them with the existing literature. Our threshold reads
\[
m^2>\omega^2\quad\text{for }p\in[4,6),\qquad m^2>\frac{2}{p-2}\omega^2\quad\text{for }p\in(2,4).
\]
In the approximate case, part $(i)$ recovers the multiplicity results of d'Avenia--Pisani~\cite{DAP02} (established for $p\in(4,6)$ under $\omega^2<m^2$) and of Mugnai~\cite{Mugnai04} (who covered the whole range $p\in(2,6)$). Our condition $m^2>\Lambda\omega^2$ coincides exactly with the one assumed by Mugnai, and with the threshold under which D'Aprile--Mugnai~\cite{DAM04} obtained infinitely many radially symmetric solitary waves of the Klein--Gordon--Maxwell system. For the Klein--Gordon--Maxwell system, Azzollini, Pisani and Pomponio~\cite{APP11} also established the existence of a single nontrivial solitary wave in the limiting case $\omega=m$, under different assumptions on the nonlinearity.

In the exact case, part $(ii)$ \emph{improves the multiplicity result of Yu}~\cite{Yu10} by enlarging its admissible frequency range. Indeed, Yu required $(\frac{p}{2}-1)m^2>\frac{p}{2}\omega^2$, that is
\[
m^2>\frac{p}{p-2}\,\omega^2,
\]
whereas we only need
\[
m^2>\max\left\{\frac{2}{p-2},1 \right\}\omega^2.
\]
Since $\frac{p}{p-2}>\max\{\frac{2}{p-2},1\}$ for every $p\in(2,6)$, our condition is strictly weaker throughout the range. In particular, the exact Born--Infeld coupling is here shown to enjoy the very same threshold already available for the approximate coupling and for the Klein--Gordon--Maxwell system.\footnote{This improvement concerns the existence of \emph{infinitely many} solutions. For a \emph{single} radially symmetric solitary wave, wider frequency ranges for $p\in(2,4)$ are known for the exact Klein--Gordon--Born--Infeld system; see Shen and Zhang~\cite{SZ25}. To the best of our knowledge, no such refinement is available at the level of multiplicity, which is the regime we address here.}

The systems~\eqref{eq:KGBI} and~\eqref{eq:NKGBI} combine several genuinely distinct difficulties. For $\alpha\ge0$, the mixed operator $\mathcal{L}_\alpha$ is positive definite, and the bilinear form $\mathcal{B}_\alpha$ (see Section~\ref{sec:3}), after adding a positive mass term, induces a norm equivalent to the $H^1$ norm, so that the existence theory follows the classical variational pattern. Ground states and least-energy solutions for related Maxwell- and Born--Infeld-coupled problems have also been obtained by Nehari-type methods, see~\cite{AP10,WTC19}.

The interesting and substantially harder regime is $\alpha<0$,
in which the nonlocal part is in competition: $\mathcal{L}_\alpha$ is no longer positive definite, $\mathcal{B}_\alpha$ no longer induces a norm,
the spectrum of $\mathcal{L}_\alpha$ has a negative part, and even maximum principles may fail (see~\cite{BDVV22,MMV23}). The threshold $\alpha_0$ of~\eqref{eq:alpha0-intro} quantifies how negative $\alpha$ may be taken before coercivity of $\mathcal{L}_\alpha+t\,\mathrm{Id}$, with $t>0$, is lost; it arises by optimizing the interpolation estimate~\eqref{eq:interp}. For fixed $t>0$, one has $\alpha_0(s,t)\to t$ as $s\to0^+$ and $\alpha_0(s,t)\to1$ as $s\to1^-$, recovering the coercivity conditions $\alpha>-t$ and $\alpha>-1$ for the respective formal limits of $\mathcal{L}_\alpha+t\,\mathrm{Id}$, namely $-\Delta+(\alpha+t)\,\mathrm{Id}$ and $-(1+\alpha)\Delta+t\,\mathrm{Id}$, as explained in~\cite{CCMV23}.

Symmetry plays a key role in the proof of Theorem~\ref{thm:main}. Restricting to radially symmetric functions restores compactness, the embedding $H^1_r(\mathbb{R}^3)\hookrightarrow L^p(\mathbb{R}^3)$ being compact for $p\in(2,6)$, unlike on all of $H^1(\mathbb{R}^3)$, which, together with the a priori estimates, yields the Palais--Smale condition. The Principle of Symmetric Criticality then transfers the radial critical points to the
whole space.

\medskip

In the rest of the paper we fix the functional framework in Section~\ref{sec:3}, paying particular attention to the space in which the potential is sought. We then deal with the Born--Infeld equation in Section~\ref{subsec:BI}, treating the two models in parallel. Finally, Section~\ref{sec:5} is devoted to setting up the variational scheme used to prove Theorem~\ref{thm:main}.

\section{Preliminaries}\label{sec:3}
\subsection{Notation and functional setting}
This subsection collects the notation and functional framework needed for our analysis. Throughout, $\|\cdot\|_q$ denotes the norm in $L^q(\mathbb{R}^3)$, while $B_r$ denotes the ball of radius $r$ centered at the origin.
For the readers' convenience, we recall the definition of the spaces $D^{1,2}(\mathbb{R}^3)$ and $D(\mathbb{R}^3)$ given in~\eqref{eq:D12}, i.e.\
\begin{equation*}
D^{1,2}(\mathbb{R}^3)=\overline{C_c^\infty(\mathbb{R}^3)}^{\,\|\nabla\,\cdot\,\|_2},\qquad
D(\mathbb{R}^3)=\overline{C_c^\infty(\mathbb{R}^3)}^{\,\|\nabla\,\cdot\,\|_2+\|\nabla\,\cdot\,\|_4},
\end{equation*}
the former being a Hilbert space with the norm $\|u\|_{D^{1,2}}\coloneqq\|\nabla u\|_2$, the latter a Banach space with the norm $\|u\|_D\coloneqq\|\nabla u\|_2+\|\nabla u\|_4$.
We also recall that
\[
H^1(\mathbb{R}^3)\coloneqq\{u\in L^2(\mathbb{R}^3):\nabla u\in L^2(\mathbb{R}^3;\mathbb{R}^3)\},
\]
which is a Hilbert space with the norm $\|u\|_{H^1}\coloneqq(\|u\|_2^2+\|\nabla u\|_2^2)^{1/2}$.
We notice that
\[
H^1(\mathbb{R}^3)\subsetneq D^{1,2}(\mathbb{R}^3),\qquad D(\mathbb{R}^3)\subsetneq D^{1,2}(\mathbb{R}^3),
\]
so that all three spaces $D^{1,2}(\mathbb{R}^3),D(\mathbb{R}^3),H^1(\mathbb{R}^3)$ continuously embed in $L^6(\mathbb{R}^3)$ by Sobolev's inequality. 
The next proposition records the two embeddings that will be used throughout.

\begin{proposition}\label{prop:embedding}
For all $p\in[2,6]$ there exists a constant $C_p>0$ such that
\begin{equation}\label{eq:emb-H}
\|u\|_p\le C_p\|u\|_{H^1}\quad\text{for all $u\in H^1(\mathbb{R}^3)$}.
\end{equation}
Moreover, for all $q\in[6,\infty]$ there exists a constant $D_q>0$ such that
\begin{equation}\label{eq:emb-D}
\|u\|_q\le D_q\|u\|_D\quad\text{for all $u\in D(\mathbb{R}^3)$}.
\end{equation}
\end{proposition}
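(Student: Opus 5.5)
The first embedding~\eqref{eq:emb-H} follows by interpolating between the two endpoint exponents $2$ and $6$. For $p=2$ the bound is trivial with $C_2=1$. For $p=6$ we invoke the Sobolev inequality in $\mathbb{R}^3$, $\|u\|_6\le S\|\nabla u\|_2$, which holds on $C_c^\infty(\mathbb{R}^3)$ and extends to $D^{1,2}(\mathbb{R}^3)\supset H^1(\mathbb{R}^3)$ by density; hence $\|u\|_6\le S\|u\|_{H^1}$. For $p\in(2,6)$ we write $\frac1p=\frac{\theta}{2}+\frac{1-\theta}{6}$ with $\theta\in(0,1)$ and apply H\"older's interpolation inequality:
\[
\|u\|_p\le\|u\|_2^\theta\|u\|_6^{1-\theta}\le S^{1-\theta}\|u\|_{H^1}.
\]
So we may take $C_p=\max\{1,S\}$.

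For~\eqref{eq:emb-D}, the case $q=6$ is again Sobolev, since $\|u\|_6\le S\|\nabla u\|_2\le S\|u\|_D$. The essential step is the endpoint $q=\infty$. Here $\nabla u\in L^2\cap L^4$, and in dimension three the exponent $4>3$ allows a Morrey-type argument. I would work first with $u\in C_c^\infty(\mathbb{R}^3)$ and fix $x\in\mathbb{R}^3$. Morrey's local estimate on the unit ball $B=B_1(x)$ gives
\[
|u(x)-u_B|\le C\,\|\nabla u\|_{L^4(B)},
\qquad u_B=\frac{1}{|B|}\int_B u.
\]
For the average, H\"older yields $|u_B|\le |B|^{-1/6}\|u\|_{L^6(B)}\le C\|\nabla u\|_2$. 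Summing the two bounds gives $\|u\|_\infty\le C(\|\nabla u\|_2+\|\nabla u\|_4)=C\|u\|_D$.

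Once the endpoints are in hand, the intermediate exponents $q\in(6,\infty)$ follow from $\|u\|_q\le\|u\|_6^{6/q}\|u\|_\infty^{1-6/q}\le \max\{S,D_\infty\}\|u\|_D$.

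The main obstacle is the passage from $C_c^\infty(\mathbb{R}^3)$ to the full space $D(\mathbb{R}^3)$. Take a sequence $u_n\in C_c^\infty(\mathbb{R}^3)$ converging to $u$ in $\|\cdot\|_D$. The estimates just proved show that $(u_n)$ is Cauchy both in $L^6$ and in $L^\infty$. Its limits in these two spaces coincide almost everywhere, because a subsequence converges a.e. Passing to the limit in the inequalities for $u_n$ therefore gives them for $u$, with the same constants, so $D_q$ can be taken uniform.
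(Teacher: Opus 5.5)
Your proposal is correct and takes the same route as the paper. For \eqref{eq:emb-H} you use Sobolev's inequality and interpolate between $L^2$ and $L^6$; for \eqref{eq:emb-D} you combine Sobolev at $q=6$ with an $L^\infty$ bound and interpolate. The one difference is that the paper simply cites \cite[Proposition~8]{FOP02} for $\|u\|_\infty\le M\|u\|_D$, whereas you prove it directly: the local Morrey estimate on unit balls (using $4>3$), a H\"older--Sobolev bound on the ball average, and the density argument, which also makes explicit the interpolation and limit steps the paper leaves implicit.
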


\begin{proof}
By Sobolev's inequality there exists $S>0$ with
\begin{equation}\label{eq:Sob}
\|u\|_6\le S\|u\|_{D^{1,2}}\quad\text{for all $u\in D^{1,2}(\mathbb{R}^3)$},
\end{equation}
which implies~\eqref{eq:emb-H}. Moreover, by~\cite[Proposition~8]{FOP02}, there exists $M>0$ with
\begin{equation}\label{eq:Minf}
\|u\|_\infty\le M\|u\|_D\quad\text{for all $u\in D(\mathbb{R}^3)$}.
\end{equation}
Combining~\eqref{eq:Sob} and~\eqref{eq:Minf}, we get~\eqref{eq:emb-D}.
\end{proof}

The spaces $D^{1,2}(\mathbb{R}^3)$ and $D(\mathbb{R}^3)$ are defined by completion. However, for the study of the field equation, it is convenient to characterize them as suitable function spaces defined by integrability conditions. This is the content of the next lemma.

\begin{lemma}\label{lem:D}
We have
\begin{align*}
D^{1,2}(\mathbb{R}^3)&=\{u\in L^6(\mathbb{R}^3):\nabla u\in L^2(\mathbb{R}^3;\mathbb{R}^3)\},\\
D(\mathbb{R}^3)&=\{u\in L^6(\mathbb{R}^3):\nabla u\in L^2(\mathbb{R}^3;\mathbb{R}^3)\cap L^4(\mathbb{R}^3;\mathbb{R}^3)\}.
\end{align*}
\end{lemma}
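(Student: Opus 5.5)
We prove each identity by a double inclusion, treating $D^{1,2}(\mathbb{R}^3)$ first; the argument for $D(\mathbb{R}^3)$ is the same with the $L^4$ gradient norm carried along. Set
\[
X\coloneqq\{u\in L^6(\mathbb{R}^3):\nabla u\in L^2\},\qquad Y\coloneqq\{u\in L^6(\mathbb{R}^3):\nabla u\in L^2\cap L^4\}.
\]

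\emph{Inclusion $D^{1,2}\subseteq X$ (and $D\subseteq Y$).} Take $u$ in the completion and a sequence $u_n\in C_c^\infty(\mathbb{R}^3)$ that is Cauchy for $\|\nabla\cdot\|_2$. By the Sobolev inequality~\eqref{eq:Sob}, $(u_n)$ is Cauchy in $L^6$, so $u_n\to u$ in $L^6$ for some $u$. Also $\nabla u_n\to G$ in $L^2$. Passing to the limit in $\int u_n\,\partial_i\varphi=-\int\partial_iu_n\,\varphi$ for $\varphi\in C_c^\infty$ gives $G=\nabla u$ in the distributional sense. This identifies the abstract completion with a subspace of $X$, and the identification is injective because $\|u\|_6\le S\|\nabla u\|_2$ passes to the limit. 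For $D$ the same argument applies, with $\nabla u_n$ also converging in $L^4$; the limit must coincide with the $L^2$ limit, since both are distributional limits.

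\emph{Inclusion $X\subseteq D^{1,2}$ (and $Y\subseteq D$).} Given $u\in X$, we approximate in two steps.

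\emph{Step 1: truncation.} Let $\chi_R(x)=\chi(x/R)$ with $\chi\in C_c^\infty(B_2)$ and $\chi\equiv1$ on $B_1$, and put $u_R=\chi_R u$. Then $\nabla u_R-\nabla u=(\chi_R-1)\nabla u+u\nabla\chi_R$. The first term tends to $0$ in $L^2$ (and in $L^4$) by dominated convergence. For the second term we use $|\nabla\chi_R|\le C/R$ on the annulus $A_R=B_{2R}\setminus B_R$, together with Hölder's inequality with exponents $6$ and $3$:
\[
\|u\nabla\chi_R\|_2\le \|u\|_{L^6(A_R)}\,\|\nabla\chi_R\|_{L^3}\le C\|u\|_{L^6(A_R)}\to0,
\]
since $\|\nabla\chi_R\|_3$ is bounded independently of $R$ by scaling in dimension three. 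For the $L^4$ norm we use the exponents $6$ and $12$:
\[
\|u\nabla\chi_R\|_4\le \|u\|_{L^6(A_R)}\,\|\nabla\chi_R\|_{12}\le C R^{-1+1/4}\|u\|_{L^6(A_R)}\to0.
\]

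\emph{Step 2: mollification.} The function $u_R$ has compact support and lies in $W^{1,2}$ (and in $W^{1,4}$ in the $D$ case). Its mollifications $u_R*\rho_\varepsilon\in C_c^\infty$ satisfy $\nabla(u_R*\rho_\varepsilon)=(\nabla u_R)*\rho_\varepsilon\to\nabla u_R$ in $L^2$ and in $L^4$. A diagonal argument then shows that $u$ lies in the closure of $C_c^\infty$ with respect to the relevant norm.

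\emph{Main obstacle.} The only delicate point is the cutoff estimate in Step 1. Here the hypothesis $u\in L^6$, rather than $u\in L^2$, is exactly what is needed: the exponent pairing $(6,3)$ is scale-invariant in $\mathbb{R}^3$, and the vanishing of $\|u\|_{L^6(A_R)}$ as $R\to\infty$ supplies the decay. Without the $L^6$ assumption the statement fails, since nonzero constants have zero gradient but do not belong to $D^{1,2}$.
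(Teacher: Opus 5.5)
Your proof is correct and follows essentially the same route as the paper. The forward inclusion comes from the Sobolev inequality, and the reverse one uses a cutoff plus mollification, with exactly the same H\"older pairings $(6,3)$ and $(6,12)$ on the annulus $B_{2R}\setminus B_R$; the only difference is that you truncate first and then mollify, whereas the paper mollifies first (reducing to smooth $u$) and then truncates, which is immaterial.
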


\begin{proof}
Denote by $\widetilde D^{1,2}(\mathbb{R}^3)$ and $\widetilde D(\mathbb{R}^3)$, respectively, the two sets on the right-hand sides. By Proposition~\ref{prop:embedding} and~\eqref{eq:Sob}, we have $D^{1,2}(\mathbb{R}^3)\subseteq\widetilde D^{1,2}(\mathbb{R}^3)$ and $D(\mathbb{R}^3)\subseteq\widetilde D(\mathbb{R}^3)$. 
Let us prove the reverse inclusions. 
We fix a family of standard mollifiers $(\rho_\varepsilon)_\varepsilon$ and a function $\eta\in C_c^\infty(\mathbb{R}^3)$ satisfying $\operatorname{supp}\eta\subset B_2$, $0\le\eta\le1$, $\eta=1$ on $B_1$ and $|\nabla\eta|\le2$. For all $R>0$, we set $\eta_R(x)\coloneqq\eta(x/R)$ for $x\in\mathbb{R}^3$.

Let us first show that $\widetilde D^{1,2}(\mathbb{R}^3)\subseteq D^{1,2}(\mathbb{R}^3)$.
Let $u\in\widetilde D^{1,2}(\mathbb{R}^3)$ be fixed. 
Then 
\[
u*\rho_\varepsilon\in C^\infty(\mathbb{R}^3)\cap\widetilde D^{1,2}(\mathbb{R}^3)
\]
and
\[
\|\nabla(u*\rho_\varepsilon)-\nabla u\|_2=\|\nabla u*\rho_\varepsilon-\nabla u\|_2\to0\quad\text{as $\varepsilon\to0$}.
\]
Therefore, we may assume $u\in C^\infty(\mathbb{R}^3)\cap\widetilde D^{1,2}(\mathbb{R}^3)$. 
Since $u\eta_R\in C_c^\infty(\mathbb{R}^3)$, the claim follows once we show that
\[
\|\nabla u-\nabla(u\eta_R)\|_2\to0\quad\text{ as }R\to\infty.
\]
We note that $\nabla(u\eta_R)=\eta_R\nabla u+u\nabla\eta_R$ and, by dominated convergence, $\|\nabla u(1-\eta_R)\|_2\to0$.
Then, it suffices to prove that 
\begin{equation}\label{eq:above}
    \|u\nabla\eta_R\|_2\to0\quad\text{ as }R\to\infty.
\end{equation}
By H\"older's inequality,
\begin{align*}
\int_{\mathbb{R}^3}|u|^2|\nabla\eta_R|^2\,{\rm d}x&=\int_{B_{2R}\setminus B_R}|u|^2|\nabla\eta_R|^2\,{\rm d}x\le\left(\int_{\mathbb{R}^3\setminus B_R}|u|^6\,{\rm d}x\right)^{1/3}\left(\int_{B_{2R}}|\nabla\eta_R|^3\,{\rm d}x\right)^{2/3}\\
&\le\left(\int_{\mathbb{R}^3\setminus B_R}|u|^6\,{\rm d}x\right)^{1/3}\frac{4}{R^2}|B_{2R}|^{2/3}=16|B_1|^{2/3}\left(\int_{\mathbb{R}^3\setminus B_R}|u|^6\,{\rm d}x\right)^{1/3}\to0,
\end{align*}
where we used that $u\in L^6(\mathbb{R}^3)$ and $|\nabla\eta_R|\le2/R$.

Let us finally show that
$\widetilde D(\mathbb{R}^3)\subseteq D(\mathbb{R}^3)$.
Let $u\in\widetilde D(\mathbb{R}^3)$ be fixed. 
As before, it holds that 
\[
u*\rho_\varepsilon\in C^\infty(\mathbb{R}^3)\cap\widetilde D(\mathbb{R}^3)
\]
and
\[
\|\nabla(u*\rho_\varepsilon)-\nabla u\|_2+\|\nabla(u*\rho_\varepsilon)-\nabla u\|_4\to0\quad\text{as $\varepsilon\to0$},
\]
so we may assume $u\in C^\infty(\mathbb{R}^3)\cap\widetilde D(\mathbb{R}^3)$. 
Then, $u\eta_R\in C_c^\infty(\mathbb{R}^3)$ and the conclusion follows once we show that
\[
\|\nabla u-\nabla(u\eta_R)\|_2\to0\quad\text{and}\quad\|\nabla u-\nabla(u\eta_R)\|_4\to0\quad\text{as $R\to\infty$}.
\]
As before, $\nabla(u\eta_R)=\eta_R\nabla u+u\nabla\eta_R$ and, by dominated convergence, $\|\nabla u(1-\eta_R)\|_2\to0$ and $\|\nabla u(1-\eta_R)\|_4\to0$.
Hence, to conclude, it suffices to prove that 
\[
\|u\nabla\eta_R\|_2\to0\quad\text{and}\quad\|u\nabla\eta_R\|_4\to0\quad\text{ as }R\to\infty.
\]
The first limit is the one just established in~\eqref{eq:above}. 
For the second, by H\"older's inequality,
\begin{align*}
\int_{\mathbb{R}^3}|u|^4|\nabla\eta_R|^4\,{\rm d}x&=\int_{B_{2R}\setminus B_R}|u|^4|\nabla\eta_R|^4\,{\rm d}x\le\left(\int_{\mathbb{R}^3\setminus B_R}|u|^6\,{\rm d}x\right)^{2/3}\left(\int_{B_{2R}}|\nabla\eta_R|^{12}\,{\rm d}x\right)^{1/3}\\
    &\le\left(\int_{\mathbb{R}^3\setminus B_R}|u|^6\,{\rm d}x\right)^{2/3}\frac{16}{R^4}|B_{2R}|^{1/3}\leq\frac{32|B_1|^{1/3}\|u\|_6^4}{R^3}\to0,
\end{align*}
since $u\in L^6(\mathbb{R}^3)$. This concludes the proof.
\end{proof}

We now turn to the potential of the \emph{exact} model. Here the field equation carries the pointwise constraint $|\nabla\phi|\le b$, which must be incorporated into the admissible set. This requires a uniform, rather than merely integrable, control of the gradient. We hence recall the space $D_\star(\mathbb{R}^3)$ introduced in~\eqref{sbirulino}, i.e.
\[
D_\star(\mathbb{R}^3)\coloneqq D^{1,2}(\mathbb{R}^3)\cap W^{1,\infty}(\mathbb{R}^3).
\]
In view of Proposition~\ref{prop:embedding} and Lemma~\ref{lem:D}, $D_\star(\mathbb{R}^3)$ can be characterized as
\[
D_\star(\mathbb{R}^3)=\{u\in L^6(\mathbb{R}^3):\nabla u\in L^2(\mathbb{R}^3;\mathbb{R}^3)\cap L^\infty(\mathbb{R}^3;\mathbb{R}^3)\}.
\]
It is a Banach space with the norm $\|u\|_{D_\star}\coloneqq\|\nabla u\|_2+\|\nabla u\|_\infty$. For all $b>0$ we introduce the sets
\begin{align}
M_b(\mathbb{R}^3)&\coloneqq\{u\in D_\star(\mathbb{R}^3):\|\nabla u\|_\infty\le b\},\label{eq:Mb}\\
K_b(\mathbb{R}^3)&\coloneqq\left\{u\in M_b(\mathbb{R}^3):\frac{|\nabla u|^2}{\sqrt{1-|\nabla u|^2/b^2}}\in L^1(\mathbb{R}^3)\right\}.\label{eq:Kb}
\end{align}
Here $M_b(\mathbb{R}^3)$ is the closed convex set on which the exact Born--Infeld energy is minimized, implicitly appearing in part ii) of our main result, while $K_b(\mathbb{R}^3)$ incorporates the additional
integrability condition satisfied by the reduced potential.

Having fixed the spaces for the potential, we now describe the matter sector, that is the fractional Sobolev scale and the mixed operator $\mathcal{L}_\alpha$ acting on it. 
It is here that the parameters $s$ and $\alpha$ enter, and that the coercivity estimate governing the admissible range of $\alpha$ is derived. 

We denote by $\mathcal{F}$ the Fourier transform, defined for $\varphi\in\mathcal{S}(\mathbb{R}^3;\mathbb{C})$ (the Schwartz space) by
\[
\mathcal{F}\varphi(\xi)=(2\pi)^{-3/2}\int_{\mathbb{R}^3}e^{-ix\cdot\xi}\varphi(x)\,{\rm d}x
\]
and extended by duality to tempered distributions. For $s\in(0,1)$ the fractional Sobolev space is
\[
H^s(\mathbb{R}^3)\coloneqq\left\{u\in L^2(\mathbb{R}^3):\int_{\mathbb{R}^3}(1+|\xi|^{2s})|\mathcal{F} u(\xi)|^2\,{\rm d}\xi<\infty\right\}.
\]
It is a Hilbert space with the norm $\|u\|_{H^s}=(\int_{\mathbb{R}^3}(1+|\xi|^{2s})|\mathcal{F} u(\xi)|^2\,{\rm d}\xi)^{1/2}$, and in addition $H^1(\mathbb{R}^3)\hookrightarrow H^s(\mathbb{R}^3)$. The fractional Laplacian is defined for $\varphi\in\mathcal{S}(\mathbb{R}^3)$ by
\[
(-\Delta)^s\varphi=\mathcal{F}^{-1}(|\xi|^{2s}\mathcal{F}\varphi),
\]
and by Plancherel's theorem
\[
\|(-\Delta)^{\frac{s}{2}}u\|_2^2=\int_{\mathbb{R}^3}|\xi|^{2s}|\mathcal{F} u(\xi)|^2\,{\rm d}\xi\quad\text{for $u\in H^s(\mathbb{R}^3)$}.
\]
In particular, for all $\varepsilon>0$ and $u\in H^1(\mathbb{R}^3)$,
\begin{equation}\label{eq:Deltas}
\|(-\Delta)^{\frac{s}{2}}u\|_2^2\le(1-s)\varepsilon^{-\frac{s}{1-s}}\|u\|_2^2+s\varepsilon\|\nabla u\|_2^2.
\end{equation}
Hence, the $s$-fractional Laplacian can be extended to a bounded linear operator
\[
(-\Delta)^s\colon H^s(\mathbb{R}^3)\to H^{-s}(\mathbb{R}^3)\coloneqq (H^s(\mathbb{R}^3))'
\]
by setting
\[
\langle (-\Delta)^s u, v\rangle_{H^{-s}\times H^s}=\int_{\mathbb{R}^3}
(-\Delta)^{\frac{s}{2}}u\,(-\Delta)^{\frac{s}{2}}v\,{\rm d}x\quad\text{for $u,v\in H^s(\mathbb{R}^3)$}.
\]
For all $\alpha\in\mathbb{R}$, the mixed local--nonlocal operator
\[
\mathcal{L}_\alpha\coloneqq -\Delta +\alpha(-\Delta)^s\colon H^1(\mathbb{R}^3)\to H^{-1}(\mathbb{R}^3)=(H^1(\mathbb{R}^3))'
\]
is defined by
\[
\langle \mathcal{L}_\alpha u, v\rangle_{H^{-1}\times H^1}=\int_{\mathbb{R}^3} \nabla u\cdot\nabla v\,{\rm d}x+\alpha\int_{\mathbb{R}^3}(-\Delta)^{\frac{s}{2}}u(-\Delta)^{\frac{s}{2}}v\,{\rm d}x\quad\text{for $u,v\in H^1(\mathbb{R}^3)$}.
\]
In particular, it is associated with the bounded bilinear form
\[
\mathcal{B}_\alpha(u,v)\coloneqq\int_{\mathbb{R}^3}\nabla u\cdot\nabla v\,{\rm d}x+\alpha\int_{\mathbb{R}^3}(-\Delta)^{\frac{s}{2}}u\,(-\Delta)^{\frac{s}{2}}v\,{\rm d}x\quad \text{for $u,v\in H^1(\mathbb{R}^3)$}.
\]
By~\eqref{eq:Deltas}, writing $\alpha^-=\max\{-\alpha,0\}$, for all $\varepsilon>0$ and $u\in H^1(\mathbb{R}^3)$,
we have
\begin{equation}\label{eq:interp}
\mathcal{B}_\alpha(u,u)\ge(1-\alpha^-s\varepsilon)\|\nabla u\|_2^2-\alpha^-(1-s)\varepsilon^{-\frac{s}{1-s}}\|u\|_2^2.
\end{equation}
The point of~\eqref{eq:interp} is that, once a mass term $t\,\|u\|_2^2$ with $t>0$ is added, the two requirements $1-\alpha^- s\varepsilon>0$ and $t-\alpha^-(1-s)\varepsilon^{-\frac{s}{1-s}}>0$ admit a common solution $\varepsilon>0$ if and only if $\alpha>-\alpha_0(s,t)$, with $\alpha_0(s,t)\coloneqq s^{-s}(1-s)^{s-1}t^{1-s}$, as in~\eqref{eq:alpha0-intro}. 
Such a mass term is produced by the coupling of the Klein--Gordon and Born--Infeld fields, and the corresponding admissible range of $\alpha$ will be studied in detail in Section~\ref{sec:5}.

\begin{remark}\label{rem:spaces}
The matter field $u$ always lives in $H^1(\mathbb{R}^3)$, or in its radial subspace $H^1_r(\mathbb{R}^3)$. The delicate choice is the space for the potential $\phi$, and it differs in the two models. In the approximate case the natural energy controls both $\|\nabla\phi\|_2$ and $\|\nabla\phi\|_4$, so $\phi$ is sought in $D(\mathbb{R}^3)$. The well-posedness of the reduction rests on the concrete characterization of $D(\mathbb{R}^3)$ given in Lemma~\ref{lem:D} and on the embedding $D(\mathbb{R}^3)\hookrightarrow L^\infty(\mathbb{R}^3)$ of~\eqref{eq:emb-D}. In the exact case the constraint $|\nabla\phi|\le b$ forces a uniform gradient bound, and the ambient space is $D_\star(\mathbb{R}^3)$, with the convex sets $M_b(\mathbb{R}^3)$ and $K_b(\mathbb{R}^3)$. Here one minimizes over $M_b(\mathbb{R}^3)$, but the minimizer solves the field equation only after the contact set $\{|\nabla\phi|=b\}$ has been shown to be negligible and the constrained variational inequality has been turned into an equation tested against all of $D_\star(\mathbb{R}^3)$. We therefore keep the \emph{energy space} and the \emph{solution space} distinct, and we verify that the reduced potential is a weak solution, in the sense made precise in Section~\ref{sec:weak}.
\end{remark}

\subsection{Weak solutions}\label{sec:weak}
In the rest of the paper $s\in (0,1)$, $\alpha\in\mathbb{R}$, $m>0$, $\omega>0$, and $p\in(2,6)$ are fixed, and we study the reduced systems~\eqref{eq:KGBI} and~\eqref{eq:NKGBI}. We conclude this section by specifying the notion of weak solution in the two cases, which differ only in the field equation and in the space of the potential.

\begin{definition}
Let $\beta>0$. A pair $(u,\phi)\in H^1(\mathbb{R}^3)\times D(\mathbb{R}^3)$ is a weak solution of~\eqref{eq:KGBI} if for all $v\in H^1(\mathbb{R}^3)$ and for all $\psi\in D(\mathbb{R}^3)$ we have
\begin{equation}
\mathcal{B}_\alpha(u,v)+m^2\int_{\mathbb{R}^3}uv\,{\rm d}x-\int_{\mathbb{R}^3}(\omega+\phi)^2uv\,{\rm d}x=\int_{\mathbb{R}^3}|u|^{p-2}uv\,{\rm d}x,\label{eq:wsol-a1}
\end{equation}
and
\begin{equation}
-\int_{\mathbb{R}^3}\nabla\phi\cdot\nabla\psi\,{\rm d}x-\beta\int_{\mathbb{R}^3}|\nabla\phi|^2\nabla\phi\cdot\nabla\psi\,{\rm d}x=4\pi\int_{\mathbb{R}^3}(\omega+\phi)\psi u^2\,{\rm d}x.
\label{eq:wsol-a2}
\end{equation}
Let $b>0$. A pair $(u,\phi)\in H^1(\mathbb{R}^3)\times K_b(\mathbb{R}^3)$ is a weak solution of~\eqref{eq:NKGBI} if~\eqref{eq:wsol-a1} holds for all $v\in H^1(\mathbb{R}^3)$ and for all $\psi\in D_\star(\mathbb{R}^3)$ one has
\begin{equation}\label{eq:wsol-b2}
-\int_{\mathbb{R}^3}\frac{\nabla\phi\cdot\nabla\psi}{\sqrt{1-|\nabla\phi|^2/b^2}}\,{\rm d}x=4\pi\int_{\mathbb{R}^3}(\omega+\phi)\psi u^2\,{\rm d}x.
\end{equation}
\end{definition}

\begin{remark}
\label{rem:wd}
All the integrals above are well defined, as we are going to explain in the sequel. Since $H^1(\mathbb{R}^3)\hookrightarrow L^p(\mathbb{R}^3)$ for $p\in[2,6]$ and $D(\mathbb{R}^3)\hookrightarrow L^\infty(\mathbb{R}^3)$, the terms in~\eqref{eq:wsol-a1}--\eqref{eq:wsol-a2} make sense for $u,v\in H^1(\mathbb{R}^3)$ and $\phi,\psi\in D(\mathbb{R}^3)$. In the exact case, if $\phi\in K_b(\mathbb{R}^3)$ then $4\pi(\omega+\phi)u^2\in L^1(\mathbb{R}^3)$, the set $\{|\nabla\phi|=b\}$ is Lebesgue-negligible by the definition of $K_b(\mathbb{R}^3)$, and for every $\psi\in D_\star(\mathbb{R}^3)$, splitting $\mathbb{R}^3$ into the sets $\{|\nabla\phi|\le b/2\}$ and $\{|\nabla\phi|> b/2\}$,
\[
\int_{\mathbb{R}^3}\frac{|\nabla\psi|^2}{\sqrt{1-|\nabla\phi|^2/b^2}}\,{\rm d}x\le\frac{2}{\sqrt3}\|\nabla\psi\|_2^2+\frac{4}{b^2}\|\nabla\psi\|_\infty^2\int_{\mathbb{R}^3}\frac{|\nabla\phi|^2}{\sqrt{1-|\nabla\phi|^2/b^2}}\,{\rm d}x<\infty,
\]
so that, by H\"older's inequality, the left-hand side of~\eqref{eq:wsol-b2} is finite.
\end{remark}

Both systems~\eqref{eq:KGBI} and~\eqref{eq:NKGBI} are solved by the same three steps. 
First, for every $u\in L^2(\mathbb{R}^3)$, a unique potential $\Phi(u)$ is obtained by minimizing a Born--Infeld energy, with suitable bounds and continuity
properties. It solves the field equation in the approximate case and, for radial $u$, in the exact case (Section~\ref{subsec:BI}). This is the step where the two models genuinely differ, and where the exact case requires the contact-set analysis of Remark~\ref{rem:spaces}. Second, inserting $\phi=\Phi(u)$ reduces the coupled system to the search for critical points of a single functional $J$ on $H^1(\mathbb{R}^3)$ (Subsection~\ref{sec:5.1}). Third, restricting $J$ to the radial subspace, we verify the geometric and compactness assumptions of an equivariant Mountain Pass Theorem and conclude (Subsection~\ref{subsec:proof}). The second and third steps are common to the two models, and we carry them out once.

\section{The Born--Infeld equation}\label{subsec:BI}

This is the only step in which the approximate and the exact models require genuinely different arguments, and we treat them in turn.

\subsection{The approximate equation}\label{subsubsec:BI-approx}

For $\beta>0$ and $u\in L^2(\mathbb{R}^3)$ we define the energy functional $E_u\colon D(\mathbb{R}^3)\to\mathbb{R}$ by
\begin{equation}\label{eq:E}
E_u(\phi)\coloneqq\frac{1}{8\pi}\|\nabla\phi\|_2^2+\frac{\beta}{16\pi}\|\nabla\phi\|_4^4+\frac{1}{2}\int_{\mathbb{R}^3}(\omega+\phi)^2u^2\,{\rm d}x.
\end{equation}
The following result shows some relevant properties of solutions of the approximate equation.
\begin{lemma}\label{lem:Phiu}
Let $\beta>0$. For all $u\in L^2(\mathbb{R}^3)$ there exists a unique $\Phi(u)\in D(\mathbb{R}^3)$ solving~\eqref{eq:wsol-a2}, namely the unique minimizer of $E_u$. Moreover, we have
\begin{equation}
\label{eq:Phi-est}
-\omega\le\Phi(u)\le0\quad\text{a.e.\ in $\mathbb{R}^3$.}
\end{equation}
Finally, the map $\Phi\colon L^2(\mathbb{R}^3)\to D(\mathbb{R}^3)$ is continuous.
\end{lemma}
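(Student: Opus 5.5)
We will use the direct method of the calculus of variations on $D(\mathbb{R}^3)$. Fix $u\in L^2(\mathbb{R}^3)$. The functional $E_u$ in~\eqref{eq:E} is well defined: by~\eqref{eq:emb-D} we have $\phi\in L^\infty(\mathbb{R}^3)$, so $(\omega+\phi)^2u^2\in L^1(\mathbb{R}^3)$. We will establish four properties of $E_u$.
\begin{itemize}
\item \emph{Continuity.} $E_u$ is continuous on $D(\mathbb{R}^3)$.
\item \emph{Strict convexity.} The maps $\phi\mapsto\|\nabla\phi\|_2^2$ and $\phi\mapsto\|\nabla\phi\|_4^4$ are strictly convex on $D(\mathbb{R}^3)$, since $\|\nabla\cdot\|_2$ is a norm there. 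The coupling term is convex.
\item \emph{Coercivity.} $E_u(\phi)\ge\frac{1}{8\pi}\|\nabla\phi\|_2^2+\frac{\beta}{16\pi}\|\nabla\phi\|_4^4$, so $E_u(\phi)\to\infty$ as $\|\phi\|_D\to\infty$.
\item \emph{Differentiability.} $E_u$ is $C^1$. Its derivative, computed by dominated convergence using~\eqref{eq:emb-D}, is
\[
E_u'(\phi)[\psi]=\frac{1}{4\pi}\int_{\mathbb{R}^3}\nabla\phi\cdot\nabla\psi\,{\rm d}x+\frac{\beta}{4\pi}\int_{\mathbb{R}^3}|\nabla\phi|^2\nabla\phi\cdot\nabla\psi\,{\rm d}x+\int_{\mathbb{R}^3}(\omega+\phi)\psi u^2\,{\rm d}x .
\]
\end{itemize}
$D(\mathbb{R}^3)$ is reflexive, being isometric to a closed subspace of $L^2\times L^4$ via $\phi\mapsto\nabla\phi$. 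A convex, continuous, coercive functional on it is weakly lower semicontinuous, so it has a minimizer, unique by strict convexity. The equation $E_u'(\phi)=0$ is, after multiplying by $-4\pi$, exactly~\eqref{eq:wsol-a2}. Conversely, any solution of~\eqref{eq:wsol-a2} is a critical point of the strictly convex $C^1$ functional $E_u$, hence its unique minimizer. This gives existence and uniqueness of $\Phi(u)$.

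For~\eqref{eq:Phi-est} we will use a truncation argument. Let $\phi=\Phi(u)$ and set $\tilde\phi\coloneqq\max\{-\omega,\min\{\phi,0\}\}$. By Lemma~\ref{lem:D}, $\tilde\phi\in D(\mathbb{R}^3)$: indeed $|\tilde\phi|\le|\phi|$ and $\nabla\tilde\phi=\nabla\phi\,\mathbbm{1}_{\{-\omega<\phi<0\}}$ a.e. Comparing the two energies:
\begin{itemize}
\item the gradient terms do not increase under truncation;
\item pointwise $(\omega+\tilde\phi)^2\le(\omega+\phi)^2$. Where $\phi>0$ we have $\omega^2<(\omega+\phi)^2$, where $\phi<-\omega$ we have $0\le(\omega+\phi)^2$, and elsewhere the two coincide.
\end{itemize}
Hence $E_u(\tilde\phi)\le E_u(\phi)$, and uniqueness of the minimizer gives $\tilde\phi=\phi$, i.e. $-\omega\le\phi\le0$ a.e.

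The step I expect to be the main obstacle is the continuity of $\Phi\colon L^2(\mathbb{R}^3)\to D(\mathbb{R}^3)$. Let $u_n\to u$ in $L^2(\mathbb{R}^3)$, and write $\phi_n=\Phi(u_n)$, $\phi=\Phi(u)$.

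\emph{Bound.} Testing~\eqref{eq:wsol-a2} for $u_n$ with $\psi=\phi_n$ gives
\[
\frac{1}{4\pi}\|\nabla\phi_n\|_2^2+\frac{\beta}{4\pi}\|\nabla\phi_n\|_4^4=-\int_{\mathbb{R}^3}(\omega+\phi_n)\phi_nu_n^2\,{\rm d}x\le\omega^2\sup_n\|u_n\|_2^2,
\]
by~\eqref{eq:Phi-est}. So $(\phi_n)_n$ is bounded in $D(\mathbb{R}^3)$.

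\emph{Strong convergence.} Subtract the equations for $\phi_n$ and $\phi$ and test with $\phi_n-\phi$. We use the monotonicity inequality $(|a|^2a-|b|^2b)\cdot(a-b)\ge c|a-b|^4$. The right-hand side splits as
\[
\int_{\mathbb{R}^3}(\omega+\phi_n)(\phi_n-\phi)(u_n^2-u^2)\,{\rm d}x+\int_{\mathbb{R}^3}(\phi_n-\phi)^2u^2\,{\rm d}x ,
\]
and the second term is nonnegative. Moving it to the good side leaves
\[
\|\nabla(\phi_n-\phi)\|_2^2+c\beta\|\nabla(\phi_n-\phi)\|_4^4\le4\pi\,\omega\,\|\phi_n-\phi\|_\infty\|u_n^2-u^2\|_1 .
\]
Here $\|\phi_n-\phi\|_\infty$ is bounded by~\eqref{eq:emb-D} and the bound above, or directly by~\eqref{eq:Phi-est}. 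Also $\|u_n^2-u^2\|_1\le\|u_n-u\|_2\|u_n+u\|_2\to0$. Hence $\phi_n\to\phi$ in $D(\mathbb{R}^3)$. The only care needed is the sign bookkeeping in this subtraction; the uniform $L^\infty$ bound from~\eqref{eq:Phi-est} is what makes the argument work with data only in $L^2$.
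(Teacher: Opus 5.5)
Your proposal is correct and follows the paper's proof essentially step by step. It uses the direct method on $D(\mathbb{R}^3)$ with the convexity characterization of the minimizer, the two-sided truncation compared through uniqueness, and continuity obtained by testing the difference of the field equations with $\Phi(u_n)-\Phi(u)$, using the monotonicity of $\xi\mapsto|\xi|^2\xi$ and the $L^\infty$ bound \eqref{eq:Phi-est}. The only cosmetic difference is that you split the charge term as $(\omega+\phi_n)(u_n^2-u^2)+(\phi_n-\phi)u^2$, whereas the paper writes it as $(\omega+\Phi(u))(u_n^2-u^2)+(\Phi(u_n)-\Phi(u))u_n^2$; both lead to the same estimate.
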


\begin{proof}
Let $u\in L^2(\mathbb{R}^3)$ be fixed. 
The functional $E_u$ is of class $C^1$ on $D(\mathbb{R}^3)$ and convex, so that
\[
E_u(\phi_2)\ge E_u(\phi_1)+E_u'(\phi_1)(\phi_2-\phi_1)\quad\text{for all $\phi_1,\phi_2\in D(\mathbb{R}^3)$}\,.
\]
Hence, $\phi$ minimizes $E_u$ if and only if $E_u'(\phi)[\psi]=0$ for all $\psi\in D(\mathbb{R}^3)$, that is if and only if~\eqref{eq:wsol-a2} holds. 
Since one has
\[
E_u(\phi)\ge\frac{1}{8\pi}\|\nabla\phi\|_2^2+\frac{\beta}{16\pi}\|\nabla\phi\|_4^4\quad\text{for all $\phi\in D(\mathbb{R}^3)$},
\]
the functional $E_u$ is strictly convex and coercive on $D(\mathbb{R}^3)$.
Then, by the direct method, $E_u$ has a unique minimizer $\Phi(u)$, which is the unique solution of~\eqref{eq:wsol-a2}.

Let us show the validity of~\eqref{eq:Phi-est}. 
We first show that
\begin{equation}\label{eq:tilde}
\widetilde\Phi(u)\coloneqq\min\{\max\{\Phi(u),-\omega\},0\}
\in D(\mathbb{R}^3).
\end{equation}
By definition, $\widetilde\Phi(u)\leq 0$ a.e.\ in $\mathbb{R}^3$, and
\begin{equation}\label{eq:tilde_1}
    \widetilde\Phi(u)=
\begin{cases}
\Phi(u)&\text{a.e.\ on $\{-\omega\le \Phi(u)\le 0\}$},\\
0&\text{a.e.\ on $\{\Phi(u)>0\}$},\\
-\omega&\text{a.e.\ on $\{\Phi(u)<-\omega\}$}.
\end{cases}
\end{equation}
Moreover, by the Sobolev chain rule for truncations, it holds that
\begin{equation}\label{eq:tilde_2}
    \nabla\widetilde\Phi(u)=
\begin{cases}
\nabla\Phi(u)&\text{a.e.\ on $\{-\omega\le \Phi(u)\le 0\}$},\\
0&\text{a.e.\ on $\{\Phi(u)>0\}\cup \{\Phi(u)<-\omega\}$}.
\end{cases}
\end{equation}
In particular, by~\eqref{eq:tilde_1} and~\eqref{eq:tilde_2},
\begin{equation}\label{eq:tilde1}
|\widetilde\Phi(u)|\le |\Phi(u)|\quad\text{and}\quad|\nabla\widetilde\Phi(u)|\le |\nabla\Phi(u)|
\quad\text{a.e.\ in $\mathbb{R}^3$}.
\end{equation}
By the characterization of $D(\mathbb{R}^3)$ given in
Lemma~\ref{lem:D}, the estimates~\eqref{eq:tilde1}
imply~\eqref{eq:tilde}. 
Furthermore, by~\eqref{eq:tilde_1}
\[
\omega+\widetilde\Phi(u)=
\begin{cases}
\omega+\Phi(u)&\text{a.e.\ on $\{-\omega\le \Phi(u)\le 0\}$},\\
\omega&\text{a.e.\ on $\{\Phi(u)>0\}$},\\
0&\text{a.e.\ on $\{\Phi(u)<-\omega\}$}.
\end{cases}
\]
Note that $\omega+\widetilde\Phi(u)\geq0$ a.e.\ in $\mathbb{R}^3$. 
In particular,
\begin{equation}\label{eq:tilde2}
|\omega+\widetilde\Phi(u)|
=\omega+\widetilde\Phi(u)
\le |\omega+\Phi(u)|
\quad\text{a.e.\ in $\mathbb{R}^3$}.
\end{equation}
Combining~\eqref{eq:tilde1} and~\eqref{eq:tilde2}, we obtain
\[
E_u(\widetilde\Phi(u))\le E_u(\Phi(u)).
\]
Since $\Phi(u)$ is the unique minimizer of $E_u$, we conclude that
\[
\widetilde\Phi(u)=\Phi(u)\quad\text{a.e.\ in }\mathbb{R}^3,
\]
and, by~\eqref{eq:tilde_1},~\eqref{eq:Phi-est} holds.

Finally, we are going to prove that $\Phi\colon L^2(\mathbb{R}^3)\to D(\mathbb{R}^3)$ is continuous. Let $u_n\to u$ in $L^2(\mathbb{R}^3)$ as $n\to\infty$. Consequently, $C_1\coloneqq\sup_n\|u_n\|_2<\infty$. Testing~\eqref{eq:wsol-a2} first for $\Phi(u_n)$ and then for $\Phi(u)$, in both cases with $\psi=\Phi(u_n)-\Phi(u)$, we obtain
\begin{align*}
&\int_{\mathbb{R}^3}\nabla\Phi(u_n)\cdot(\nabla\Phi(u_n)-\nabla\Phi(u))\,{\rm d}x+\beta\int_{\mathbb{R}^3}|\nabla\Phi(u_n)|^2\nabla\Phi(u_n)\cdot(\nabla\Phi(u_n)-\nabla\Phi(u))\,{\rm d}x\\
&=-4\pi\omega\int_{\mathbb{R}^3}(\Phi(u_n)-\Phi(u))u_n^2\,{\rm d}x-4\pi\int_{\mathbb{R}^3}\Phi(u_n)(\Phi(u_n)-\Phi(u))u_n^2\,{\rm d}x,
\end{align*}
and
\begin{align*}
&\int_{\mathbb{R}^3}\nabla\Phi(u)\cdot(\nabla\Phi(u_n)-\nabla\Phi(u))\,{\rm d}x+\beta\int_{\mathbb{R}^3}|\nabla\Phi(u)|^2\nabla\Phi(u)\cdot(\nabla\Phi(u_n)-\nabla\Phi(u))\,{\rm d}x\\
&=-4\pi\omega\int_{\mathbb{R}^3}(\Phi(u_n)-\Phi(u))u^2\,{\rm d}x-4\pi\int_{\mathbb{R}^3}\Phi(u)(\Phi(u_n)-\Phi(u))u^2\,{\rm d}x.
\end{align*}
Subtracting the second identity from the first and using the monotonicity inequality
\[
(|\xi_1|^2\xi_1-|\xi_2|^2\xi_2)\cdot(\xi_1-\xi_2)\ge\frac{1}{4}|\xi_1-\xi_2|^4\quad\text{for all $\xi_1,\xi_2\in\mathbb{R}^3$},
\]
together with~\eqref{eq:Phi-est} (so that $0\le\omega+\Phi(u)\le\omega$ and $|\Phi(u_n)-\Phi(u)|\le\omega$), we get
\begin{align*}
&\|\nabla\Phi(u_n)-\nabla\Phi(u)\|_2^2+\frac{\beta}{4}\|\nabla\Phi(u_n)-\nabla\Phi(u)\|_4^4\\
&\le4\pi\omega\int_{\mathbb{R}^3}(\Phi(u_n)-\Phi(u))(u^2-u_n^2)\,{\rm d}x+4\pi\int_{\mathbb{R}^3}(\Phi(u)u^2-\Phi(u_n)u_n^2)(\Phi(u_n)-\Phi(u))\,{\rm d}x\\
&=4\pi\int_{\mathbb{R}^3}(\omega+\Phi(u))(\Phi(u_n)-\Phi(u))(u^2-u_n^2)\,{\rm d}x-4\pi\int_{\mathbb{R}^3}(\Phi(u_n)-\Phi(u))^2u_n^2\,{\rm d}x\\
&\le4\pi\int_{\mathbb{R}^3}(\omega+\Phi(u))(\Phi(u_n)-\Phi(u))(u^2-u_n^2)\,{\rm d}x\\
&\le4\pi\omega^2\|u+u_n\|_2\|u-u_n\|_2\le8\pi\omega^2C_1\|u-u_n\|_2.
\end{align*}
Since $u_n\to u$ in $L^2(\mathbb{R}^3)$, we conclude that $\Phi(u_n)\to\Phi(u)$ in $D(\mathbb{R}^3)$, so proving the announced continuity.
\end{proof}

\subsection{The exact equation}
\label{subsubsec:BI-exact}

For $b>0$ and $u\in L^2(\mathbb{R}^3)$, we define the Born--Infeld energy $F_u\colon M_b(\mathbb{R}^3)\to[0,\infty)$ by
\begin{equation}\label{eq:E2}
F_u(\phi)\coloneqq\frac{b^2}{4\pi}\int_{\mathbb{R}^3}\left(1-\sqrt{1-\frac{|\nabla\phi|^2}{b^2}}\right){\rm d}x+\frac{1}{2}\int_{\mathbb{R}^3}(\omega+\phi)^2u^2\,{\rm d}x.
\end{equation}
 We are going to use the elementary inequality
\begin{equation}\label{eq:f-bound}
\frac{t}{2}\le1-\sqrt{1-t}\le t\quad\text{for all $t\in[0,1]$},
\end{equation}
which follows from $1-\frac{t}{2}\ge\sqrt{1-t}\Leftrightarrow\frac{t^2}{4}\ge0$ and $1-t\le\sqrt{1-t}\Leftrightarrow t(t-1)\le0$. By it we get $F_u(\phi)<\infty$ for all $\phi\in M_b(\mathbb{R}^3)$ and $F_u(\phi)\ge\frac{1}{8\pi}\|\nabla\phi\|_2^2$.

By following ~\cite{Yu10}, we are now going to establish the variational properties of $F_u$.

\begin{lemma}\label{lem:weak-closed}
Let $b>0$. The set $M_b(\mathbb{R}^3)$ is a closed convex subset of $D^{1,2}(\mathbb{R}^3)$, hence it is also weakly closed.
\end{lemma}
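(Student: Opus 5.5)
Convexity is immediate from the pointwise structure. If $\phi_1,\phi_2\in M_b(\mathbb{R}^3)$ and $\lambda\in[0,1]$, then $\lambda\phi_1+(1-\lambda)\phi_2\in D^{1,2}(\mathbb{R}^3)$. Moreover, a.e.\ in $\mathbb{R}^3$ we have $|\nabla(\lambda\phi_1+(1-\lambda)\phi_2)|\le\lambda|\nabla\phi_1|+(1-\lambda)|\nabla\phi_2|\le b$. In particular the combination lies in $W^{1,\infty}(\mathbb{R}^3)$, hence in $D_\star(\mathbb{R}^3)$, with gradient bound $b$. Here membership in $D_\star$ is read through the characterization $\{u\in L^6:\nabla u\in L^2\cap L^\infty\}$ given after Lemma~\ref{lem:D}, so no separate $L^\infty$ bound on the function itself is needed.

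For closedness, take $\phi_n\in M_b(\mathbb{R}^3)$ with $\phi_n\to\phi$ in $D^{1,2}(\mathbb{R}^3)$. Then $\nabla\phi_n\to\nabla\phi$ in $L^2(\mathbb{R}^3;\mathbb{R}^3)$, so along a subsequence $\nabla\phi_{n_k}\to\nabla\phi$ a.e. Passing to the limit in $|\nabla\phi_{n_k}|\le b$ gives $|\nabla\phi|\le b$ a.e., that is $\nabla\phi\in L^\infty$ with $\|\nabla\phi\|_\infty\le b$. Since $\phi\in D^{1,2}(\mathbb{R}^3)\subset L^6(\mathbb{R}^3)$ by \eqref{eq:Sob} and $\nabla\phi\in L^2\cap L^\infty$, the characterization of $D_\star(\mathbb{R}^3)$ gives $\phi\in D_\star(\mathbb{R}^3)$, hence $\phi\in M_b(\mathbb{R}^3)$.

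Weak closedness then follows from Mazur's theorem: in the Hilbert space $D^{1,2}(\mathbb{R}^3)$, a convex set that is strongly closed is weakly closed.

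The only point needing care, and it is minor, is the ambient space. One should not claim that $\phi$ lies in $W^{1,\infty}$ in the sense of requiring $\phi\in L^\infty$. Instead one uses the characterization of $D_\star$ via $L^6$ and gradient integrability, or alternatively observes that $\phi\in W^{1,\infty}_{\rm loc}$ with bounded gradient, consistent with the definition. Everything else is routine.
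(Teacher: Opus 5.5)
Your proof is correct and follows the paper's own argument: convexity from the pointwise triangle inequality, strong closedness via a.e.\ convergence of the gradients along a subsequence, and weak closedness by Mazur's theorem. Your closing caveat is unnecessary, though: $\phi$ is in fact bounded, because $\nabla\phi\in L^2\cap L^\infty\subset L^4$ places $\phi$ in $D(\mathbb{R}^3)\hookrightarrow L^\infty(\mathbb{R}^3)$ by Lemma~\ref{lem:D} and \eqref{eq:Minf}, so $\phi\in W^{1,\infty}(\mathbb{R}^3)$ in the full sense.
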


\begin{proof}
Convexity is clear. If $(\psi_n)_n\subset M_b(\mathbb{R}^3)$ and $\psi_n\to\psi$ in $D^{1,2}(\mathbb{R}^3)$ as $n\to\infty$, then $\nabla\psi_n\to\nabla\psi$ a.e.\ along a subsequence. Since $|\nabla\psi_n|\le b$ a.e., also $|\nabla\psi|\le b$ a.e., so $\psi\in M_b(\mathbb{R}^3)$.
\end{proof}

\begin{lemma}\label{lem:conv-unif}
Let $b>0$, $(\phi_n)_n\subset M_b(\mathbb{R}^3)$, and $\phi\in M_b(\mathbb{R}^3)$ with $\phi_n\rightharpoonup\phi$ in $D^{1,2}(\mathbb{R}^3)$ as $n\to\infty$. Then $\phi_n\to\phi$ in $L^\infty_{\mathrm{loc}}(\mathbb{R}^3)$ as $n\to\infty$.
\end{lemma}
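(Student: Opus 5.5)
The plan is to combine the uniform Lipschitz bound $\|\nabla\phi_n\|_\infty\le b$ with the weak convergence in $D^{1,2}(\mathbb{R}^3)$ through an Arzelà--Ascoli argument on balls. Every $\phi_n$ has an $L^\infty$ gradient, so each has a locally Lipschitz representative, and
\[
|\phi_n(x)-\phi_n(y)|\le b|x-y|\quad\text{for all $x,y\in\mathbb{R}^3$}.
\]
To get this for a.e.\ representative globally, first note that $W^{1,\infty}_{\mathrm{loc}}$ functions on convex sets are Lipschitz with constant $\|\nabla\cdot\|_\infty$. This is shown by mollifying and passing to the limit. The same holds for $\phi$. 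The family $(\phi_n)_n$ is thus equi-Lipschitz, hence equicontinuous on every ball $B_R$.

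The next step is a uniform bound on $\sup_{B_R}|\phi_n|$. Weak convergence in $D^{1,2}(\mathbb{R}^3)$ gives $\sup_n\|\nabla\phi_n\|_2<\infty$, so by the Sobolev inequality \eqref{eq:Sob} also $\sup_n\|\phi_n\|_6\le C$. If $|\phi_n(x_0)|=t$ for some $x_0\in B_R$, the Lipschitz bound gives $|\phi_n|\ge t/2$ on $B_{t/(2b)}(x_0)$. This forces $(t/2)^6\,|B_1|\,(t/(2b))^3\le C^6$, so $t$ is bounded independently of $n$ and $R$. In fact this gives a uniform bound on all of $\mathbb{R}^3$, not only on $B_R$.

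By Arzelà--Ascoli on each $\overline{B_R}$, combined with a diagonal argument over $R\in\mathbb{N}$, every subsequence of $(\phi_n)_n$ has a further subsequence converging locally uniformly to some continuous $g$. We then identify the limit as $g=\phi$. Weak convergence in $D^{1,2}(\mathbb{R}^3)$ implies $\phi_n\rightharpoonup\phi$ weakly in $L^6(\mathbb{R}^3)$, since $D^{1,2}\hookrightarrow L^6$ is continuous and hence weak-weak continuous. On the other hand, local uniform convergence gives $\int\phi_n\chi\to\int g\chi$ for every $\chi\in C_c(\mathbb{R}^3)$. Therefore $g=\phi$ a.e., and by continuity $g$ coincides with the continuous representative of $\phi$.

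Since the limit does not depend on the subsequence, the standard subsequence-of-subsequence argument yields convergence of the whole sequence in $L^\infty(B_R)$ for every $R$. The only mildly delicate point is the uniform pointwise bound, because the space is homogeneous and has no $L^\infty$ control a priori. The $L^6$-plus-Lipschitz argument above handles this. Alternatively, one can use Morrey's inequality on $B_{R+1}$, together with the local $L^2$ bound that follows from the $L^6$ bound.
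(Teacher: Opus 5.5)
Your proof is correct, and its skeleton is the same as the paper's: an equi-Lipschitz bound plus a uniform sup bound, then Arzel\`a--Ascoli. The differences are in how you obtain the uniform bound on $\|\phi_n\|_\infty$, and in how much of the Arzel\`a--Ascoli conclusion you make explicit.

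For the sup bound, the paper observes that $\|\nabla\phi_n\|_4^4\le b^2\|\nabla\phi_n\|_2^2$, so $(\phi_n)_n$ is bounded in $D(\mathbb{R}^3)$. It then invokes the embedding $D(\mathbb{R}^3)\hookrightarrow L^\infty(\mathbb{R}^3)$ of Proposition~\ref{prop:embedding}, which is imported from the literature. You instead prove directly, from the Lipschitz bound and Sobolev's inequality, the elementary interpolation estimate $\|\phi\|_\infty\le 2|B_1|^{-1/9}\|\nabla\phi\|_\infty^{1/3}\|\phi\|_6^{2/3}$. This makes the argument self-contained at the cost of a few extra lines. Your alternative via Morrey's inequality on $B_{R+1}$ would also work.

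You also carry out two steps the paper leaves implicit. Arzel\`a--Ascoli only yields subsequences converging locally uniformly to some continuous function. You identify that limit with $\phi$ by comparing with the weak $L^6$ convergence inherited from $D^{1,2}(\mathbb{R}^3)$. You then pass to the whole sequence by the subsequence-of-subsequence argument. Both steps are needed for the statement as written, so your version is, if anything, the more complete one.
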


\begin{proof}
The sequence $(\phi_n)_n$ is bounded in $D_\star(\mathbb{R}^3)$, hence in $D(\mathbb{R}^3)$. Therefore, by Proposition~\ref{prop:embedding}, it is bounded in $W^{1,\infty}(\mathbb{R}^3)$ as well. Hence it is locally uniformly equicontinuous and bounded. By the Arzel\`a--Ascoli theorem we thus get $\phi_n\to\phi$ in $L^\infty_{\mathrm{loc}}(\mathbb{R}^3)$.
\end{proof}

\begin{lemma}\label{lem:weak-semi}
Let $b>0$ and $u\in L^2(\mathbb{R}^3)$. The functional $F_u$ is coercive and sequentially weakly lower semicontinuous in $D^{1,2}(\mathbb{R}^3)$.
\end{lemma}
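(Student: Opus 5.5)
Coercivity follows at once from the lower bound $F_u(\phi)\ge\frac{1}{8\pi}\|\nabla\phi\|_2^2$, which comes from~\eqref{eq:f-bound} together with the nonnegativity of the coupling term. Indeed, if $\phi\in M_b(\mathbb{R}^3)$ and $\|\phi\|_{D^{1,2}}\to\infty$, then $F_u(\phi)\to\infty$. (We regard $F_u$ as $+\infty$ on $D^{1,2}(\mathbb{R}^3)\setminus M_b(\mathbb{R}^3)$ if one wants a functional defined on the whole space.)

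For weak lower semicontinuity, take $\phi_n\rightharpoonup\phi$ in $D^{1,2}(\mathbb{R}^3)$ with $(\phi_n)_n\subset M_b(\mathbb{R}^3)$. By Lemma~\ref{lem:weak-closed}, $\phi\in M_b(\mathbb{R}^3)$, so $F_u(\phi)$ is defined. We may pass to a subsequence realizing $\liminf_n F_u(\phi_n)$ and treat the two terms separately.

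The gradient term is handled by convexity. The integrand $g(\xi)\coloneqq\frac{b^2}{4\pi}\bigl(1-\sqrt{1-|\xi|^2/b^2}\bigr)$ is convex, continuous and nonnegative on the closed ball $\{|\xi|\le b\}$. Hence $\phi\mapsto\int_{\mathbb{R}^3}g(\nabla\phi)\,{\rm d}x$ is a convex functional on $M_b(\mathbb{R}^3)$. It is strongly lower semicontinuous in $D^{1,2}$: along an a.e.\ convergent subsequence of gradients, Fatou's lemma and the continuity of $g$ give the inequality. By Mazur's lemma, convexity together with strong lower semicontinuity yields weak lower semicontinuity. Alternatively, one can apply the standard lower semicontinuity theorem for convex integrands under weak $L^2$ convergence of gradients, restricted to balls $B_R$, and then let $R\to\infty$ using $g\ge0$.

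The coupling term is handled by local uniform convergence. By Lemma~\ref{lem:conv-unif}, $\phi_n\to\phi$ in $L^\infty_{\mathrm{loc}}(\mathbb{R}^3)$, so $(\omega+\phi_n)^2u^2\to(\omega+\phi)^2u^2$ a.e. Since these integrands are nonnegative, Fatou's lemma gives $\int(\omega+\phi)^2u^2\le\liminf_n\int(\omega+\phi_n)^2u^2$. Here only pointwise convergence and nonnegativity are needed, not a global bound.

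Combining the two terms, and using $\liminf a_n+\liminf b_n\le\liminf(a_n+b_n)$, we get $F_u(\phi)\le\liminf_n F_u(\phi_n)$. The only point needing care is the convex gradient term, where a pointwise argument alone fails because weak convergence does not give a.e.\ convergence of the gradients. Mazur's lemma is the tool that resolves this, and it applies cleanly because $M_b(\mathbb{R}^3)$ is convex and closed.
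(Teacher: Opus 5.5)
Your proof is correct and follows the paper's argument: coercivity comes from $F_u(\phi)\ge\frac{1}{8\pi}\|\nabla\phi\|_2^2$, the gradient term is handled by convexity of $\xi\mapsto 1-\sqrt{1-|\xi|^2/b^2}$ (you usefully spell out the Mazur/strong-lower-semicontinuity step that the paper leaves implicit), and the coupling term by Lemma~\ref{lem:conv-unif}. The only deviation is that you use Fatou's lemma on the coupling term, which gives just the inequality needed, whereas the paper uses dominated convergence to get actual convergence; that requires the uniform $L^\infty$ bound on $\phi_n$ coming from boundedness in $D(\mathbb{R}^3)$.
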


\begin{proof}
The coercivity follows from the estimate $F_u(\phi)\ge\frac{1}{8\pi}\|\nabla\phi\|_2^2$. Let $\phi_n\rightharpoonup\phi$ in $D^{1,2}(\mathbb{R}^3)$ as $n\to\infty$, with $(\phi_n)_n,\phi\in M_b(\mathbb{R}^3)$. Since $\xi\mapsto1-\sqrt{1-|\xi|^2/b^2}$ is convex on $\{|\xi|\le b\}$, we have
\[
\int_{\mathbb{R}^3}\left(1-\sqrt{1-\frac{|\nabla\phi|^2}{b^2}}\right){\rm d}x\le\liminf_{n\to\infty}\int_{\mathbb{R}^3}\left(1-\sqrt{1-\frac{|\nabla\phi_n|^2}{b^2}}\right){\rm d}x,
\]
while by Lemma~\ref{lem:conv-unif} and dominated convergence we get $\int_{\mathbb{R}^3}(\omega+\phi_n)^2u^2\,{\rm d}x\to\int_{\mathbb{R}^3}(\omega+\phi)^2u^2\,{\rm d}x$. Hence $F_u$ is sequentially weakly lower semicontinuous.
\end{proof}

\begin{proposition}\label{prop:exist-min}
Let $b>0$ and $u\in L^2(\mathbb{R}^3)$. There exists a unique minimizer $\Phi(u)\in M_b(\mathbb{R}^3)$ of $F_u$.
\end{proposition}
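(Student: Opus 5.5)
Existence will follow from the direct method on the weakly closed convex set $M_b(\mathbb{R}^3)\subset D^{1,2}(\mathbb{R}^3)$, and uniqueness from strict convexity. First, $M_b(\mathbb{R}^3)$ is nonempty (it contains $0$) and $F_u(0)=\frac{\omega^2}{2}\|u\|_2^2<\infty$, so $m\coloneqq\inf_{M_b}F_u\in[0,\infty)$. I take a minimizing sequence $(\phi_n)_n\subset M_b(\mathbb{R}^3)$. The coercivity in Lemma~\ref{lem:weak-semi}, namely $F_u(\phi)\ge\frac{1}{8\pi}\|\nabla\phi\|_2^2$, makes $(\phi_n)_n$ bounded in the Hilbert space $D^{1,2}(\mathbb{R}^3)$. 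Up to a subsequence, $\phi_n\rightharpoonup\phi$ in $D^{1,2}(\mathbb{R}^3)$, and $\phi\in M_b(\mathbb{R}^3)$ by Lemma~\ref{lem:weak-closed}. The sequential weak lower semicontinuity of Lemma~\ref{lem:weak-semi} (whose proof uses Lemma~\ref{lem:conv-unif} for the coupling term) then gives $F_u(\phi)\le\liminf_n F_u(\phi_n)=m$, so $\phi$ is a minimizer.

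For uniqueness, suppose $\phi_1,\phi_2\in M_b(\mathbb{R}^3)$ are two minimizers, and set $\bar\phi\coloneqq\frac12(\phi_1+\phi_2)\in M_b(\mathbb{R}^3)$, which is admissible by convexity. The key point is that $g(\xi)\coloneqq1-\sqrt{1-|\xi|^2/b^2}$ is \emph{strictly} convex on the closed ball $\{|\xi|\le b\}$. I would check this by writing $g(\xi)=1-h(|\xi|^2)$ with $h(t)=\sqrt{1-t/b^2}$ concave. Alternatively, since $-\sqrt{b^2-|\xi|^2}$ parametrizes the lower hemisphere, its graph is a strictly convex surface, strict even up to the boundary sphere along nondegenerate segments.

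Given strict convexity, both terms of $F_u$ are convex: the first because $g$ is, the second because $\phi\mapsto(\omega+\phi)^2u^2$ is convex pointwise. Hence
\[
m\le F_u(\bar\phi)\le\tfrac12F_u(\phi_1)+\tfrac12F_u(\phi_2)=m,
\]
so equality must hold in the gradient term. This gives $g\big(\tfrac{\nabla\phi_1+\nabla\phi_2}{2}\big)=\tfrac12 g(\nabla\phi_1)+\tfrac12 g(\nabla\phi_2)$ a.e. Strict convexity then yields $\nabla\phi_1=\nabla\phi_2$ a.e. Since $\phi_1-\phi_2\in D^{1,2}(\mathbb{R}^3)$ has zero gradient, the Sobolev inequality \eqref{eq:Sob} gives $\phi_1=\phi_2$.

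The step I expect to need the most care is this strict convexity of $g$ on the whole closed ball, including segments touching the sphere $|\xi|=b$, where $g$ is not differentiable. I would handle it directly. For $\xi\neq\eta$ with $|\xi|,|\eta|\le b$, the function $\xi\mapsto|\xi|^2$ is strictly convex, so $|\frac{\xi+\eta}{2}|^2<\frac{|\xi|^2+|\eta|^2}{2}$. Since $t\mapsto-\sqrt{1-t/b^2}$ is increasing and convex on $[0,b^2]$, this gives
\[
g\Big(\tfrac{\xi+\eta}{2}\Big)<1-\sqrt{1-\tfrac{|\xi|^2+|\eta|^2}{2b^2}}\le\tfrac12 g(\xi)+\tfrac12 g(\eta).
\]
The first inequality is strict because $-\sqrt{1-t/b^2}$ is strictly increasing. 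This argument avoids any differentiability issue at the boundary.
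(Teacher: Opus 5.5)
Your proposal is correct and follows the paper's proof: existence by the direct method via Lemmas~\ref{lem:weak-closed} and~\ref{lem:weak-semi}, and uniqueness from the strict convexity of $\xi\mapsto1-\sqrt{1-|\xi|^2/b^2}$. You add details the paper leaves implicit, all of them sound: the midpoint argument, strict convexity up to the sphere $|\xi|=b$ via strict monotonicity and convexity of $t\mapsto1-\sqrt{1-t/b^2}$, and the step from $\nabla\phi_1=\nabla\phi_2$ to $\phi_1=\phi_2$ via~\eqref{eq:Sob}.
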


\begin{proof}
Existence follows from Lemmas~\ref{lem:weak-closed} and~\ref{lem:weak-semi} by the direct method, and uniqueness from the strict convexity of $\xi\mapsto1-\sqrt{1-|\xi|^2/b^2}$.
\end{proof}

The next proposition is the crucial step discussed in Remark~\ref{rem:spaces}: it shows that the contact set is negligible and turns minimality into a variational inequality. Its proof follows~\cite[Proposition~2.7]{BdAP16}.

\begin{proposition}\label{prop:uniq}
Let $b>0$ and $u\in L^2(\mathbb{R}^3)$. Let $\Phi(u)\in M_b(\mathbb{R}^3)$ be the minimizer of $F_u$. Then, the set $A\coloneqq\{x\in\mathbb{R}^3:|\nabla\Phi(u)(x)|=b\}$ has zero Lebesgue measure and we have
\begin{equation}\label{eq:estimate}
\int_{\mathbb{R}^3}\frac{|\nabla\Phi(u)|^2}{\sqrt{1-|\nabla\Phi(u)|^2/b^2}}\,{\rm d}x\le-4\pi\int_{\mathbb{R}^3}(\omega+\Phi(u))\Phi(u)u^2\,{\rm d}x.
\end{equation}
In particular, $\Phi(u)\in K_b(\mathbb{R}^3)$. Moreover, $\phi\in K_b(\mathbb{R}^3)$ minimizes $F_u$ if and only if
\begin{equation}\label{eq:var-ineq}
\int_{\mathbb{R}^3}\frac{|\nabla\phi|^2}{\sqrt{1-|\nabla\phi|^2/b^2}}\,{\rm d}x-\int_{\mathbb{R}^3}\frac{\nabla\phi\cdot\nabla\psi}{\sqrt{1-|\nabla\phi|^2/b^2}}\,{\rm d}x\le4\pi\int_{\mathbb{R}^3}(\omega+\phi)(\psi-\phi)u^2\,{\rm d}x
\end{equation}
for all $\psi\in M_b(\mathbb{R}^3)$.
\end{proposition}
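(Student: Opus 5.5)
We follow the scheme of \cite[Proposition~2.7]{BdAP16}, comparing $\phi\coloneqq\Phi(u)$ with the competitors $\phi_t\coloneqq(1-t)\phi+t\psi\in M_b(\mathbb{R}^3)$, $t\in(0,1]$, $\psi\in M_b(\mathbb{R}^3)$. Set $f(\xi)\coloneqq b^2(1-\sqrt{1-|\xi|^2/b^2})$, which is convex on $\{|\xi|\le b\}$ and smooth in its interior, with $\nabla f(\xi)=\xi/\sqrt{1-|\xi|^2/b^2}$. Minimality gives $F_u(\phi_t)-F_u(\phi)\ge0$. We divide by $t$ and let $t\to0^+$. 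The quotient of the quadratic term converges to $\int(\omega+\phi)(\psi-\phi)u^2\,{\rm d}x$ by dominated convergence, since $\phi,\psi\in L^\infty$. For the Born--Infeld term, convexity makes the difference quotients $q_t\coloneqq[f(\nabla\phi_t)-f(\nabla\phi)]/t$ nonincreasing as $t\downarrow0$ and bounded above by $q_1=f(\nabla\psi)-f(\nabla\phi)\in L^1$. Monotone convergence therefore gives
\[
\frac{1}{4\pi}\int_{\mathbb{R}^3}\lim_{t\to0^+}q_t\,{\rm d}x+\int_{\mathbb{R}^3}(\omega+\phi)(\psi-\phi)u^2\,{\rm d}x\ge0.
\]

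The key choice is $\psi=0$, so that $\nabla\phi_t=(1-t)\nabla\phi$. On the set $A$ the one-sided derivative of $t\mapsto f((1-t)\nabla\phi)$ at $t=0$ is $-\infty$, because $\frac{d}{dt}\sqrt{1-(1-t)^2}\to\infty$. Off $A$ the limit of $q_t$ equals $-|\nabla\phi|^2/\sqrt{1-|\nabla\phi|^2/b^2}$. Since the integral of the limit must be $>-\infty$ while the remaining term is finite, we get $|A|=0$. Reading off the inequality then yields~\eqref{eq:estimate} with the stated constant; up to the normalization by $4\pi$, it is exactly the limiting inequality above with $\psi=0$. This shows $\Phi(u)\in K_b(\mathbb{R}^3)$.

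For the \emph{only if} part of the characterization we take $\phi\in K_b(\mathbb{R}^3)$ minimizing $F_u$ and a general $\psi\in M_b(\mathbb{R}^3)$. Since $|A|=0$, the pointwise limit of $q_t$ is a.e.\ $\nabla f(\nabla\phi)\cdot(\nabla\psi-\nabla\phi)$, which is integrable by the estimate of Remark~\ref{rem:wd}. Monotone convergence, with the same upper bound $q_1$, then turns the limiting inequality into~\eqref{eq:var-ineq}, after rearranging and multiplying by $-1$. For the \emph{if} part we use convexity pointwise: off $A$,
\[
f(\nabla\psi)\ge f(\nabla\phi)+\nabla f(\nabla\phi)\cdot(\nabla\psi-\nabla\phi),
\]
and $(\omega+\psi)^2\ge(\omega+\phi)^2+2(\omega+\phi)(\psi-\phi)$. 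Integrating both and invoking~\eqref{eq:var-ineq} gives $F_u(\psi)\ge F_u(\phi)$.

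The main obstacle is the monotone convergence argument in the first step. We need an integrable upper bound that is uniform in $t$, which $q_1$ provides, together with the observation that the limit may be $-\infty$ on $A$ without the argument breaking down. The negligibility of $A$ then has to be extracted from the finiteness of the resulting integral.
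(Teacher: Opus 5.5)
Your proof is correct and follows essentially the paper's argument. You use the competitors $(1-t)\Phi(u)$ to get the negligibility of $A$ and the estimate, convex combinations $\Phi(u)+t(\psi-\Phi(u))$ for the variational inequality, and pointwise convexity for the converse. The only difference is the limit passage $t\to0^+$: you handle it in one step, using monotonicity of the convex difference quotients together with the integrable upper bound $q_1$. The paper instead uses three separate tools: the explicit bound $b^2|A|\sqrt{2t-t^2}\le Ct$ on $A$, Fatou's lemma, and dominated convergence with the majorant $3(|\nabla\Phi(u)|^2+|\nabla\psi|^2)/\sqrt{1-|\nabla\Phi(u)|^2/b^2}$.
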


\begin{proof}
Since $0\in M_b(\mathbb{R}^3)$, we have $(1-t)\Phi(u)\in M_b(\mathbb{R}^3)$ for all $t\in[0,1]$, and by minimality $F_u(\Phi(u))\le F_u((1-t)\Phi(u))$, i.e.,
\begin{align}
&b^2\int_{\mathbb{R}^3}\left(\sqrt{1-\frac{(1-t)^2|\nabla\Phi(u)|^2}{b^2}}-\sqrt{1-\frac{|\nabla\Phi(u)|^2}{b^2}}\right){\rm d}x\nonumber\\
&\le-2\pi t\int_{\mathbb{R}^3}(2\omega+2\Phi(u)-t\Phi(u))\Phi(u)u^2\,{\rm d}x.\label{eq:sqrt}
\end{align}
By splitting the first integral over $\mathbb{R}^3\setminus A$ and $A$, since both resulting terms are non-negative, we get
\[
b^2|A|\sqrt{2t-t^2}\le-2\pi t\int_{\mathbb{R}^3}(2\omega+2\Phi(u)-t\Phi(u))\Phi(u)u^2\,{\rm d}x\quad\text{for $t\in(0,1]$,}
\]
that is
\[
b^2|A|\le-2\pi\sqrt\frac{t}{2-t}\int_{\mathbb{R}^3}(2\omega+2\Phi(u)-t\Phi(u))\Phi(u)u^2\,{\rm d}x\quad\text{for all $t\in(0,1]$}.
\]
Then, by letting $t\to0^+$, we get that $|A|=0$. Moreover, by dividing inequality~\eqref{eq:sqrt} by $t$, we have
\begin{align*}
&\int_{\mathbb{R}^3}\frac{(2-t)|\nabla\Phi(u)|^2}{\left(\sqrt{1-(1-t)^2|\nabla\Phi(u)|^2/b^2}+\sqrt{1-|\nabla\Phi(u)|^2/b^2}\right)}{\rm d}x\\
&\le-2\pi\int_{\mathbb{R}^3}(2\omega+2\Phi(u)-t\Phi(u))\Phi(u)u^2\,{\rm d}x.
\end{align*}
By Fatou's lemma, the former inequality yields~\eqref{eq:estimate} as $t\to0^+$.
In particular, $\Phi(u)\in K_b(\mathbb{R}^3)$.

Let now $\Phi(u)$ be the minimizer of $F_u$ and fix $\psi\in M_b(\mathbb{R}^3)$. Since $\Phi(u)+t(\psi-\Phi(u))\in M_b(\mathbb{R}^3)$ for $t\in[0,1]$, minimality gives, for $t\in(0,1]$,
\begin{align*}
&\int_{\mathbb{R}^3}\frac{(2-t)|\nabla\Phi(u)|^2-2(1-t)\nabla\Phi(u)\cdot\nabla\psi-t|\nabla\psi|^2}{\sqrt{1-|\nabla\Phi(u)+t(\nabla\psi-\nabla\Phi(u))|^2/b^2}+\sqrt{1-|\nabla\Phi(u)|^2/b^2}}\,{\rm d}x\\
&\le 2\pi\int_{\mathbb{R}^3}(2\omega+2\Phi(u)+t(\psi-\Phi(u)))(\psi-\Phi(u))u^2\,{\rm d}x.
\end{align*}
The integrand on the left is dominated by $3\left(|\nabla\Phi(u)|^2+|\nabla\psi|^2\right)\big/\sqrt{1-|\nabla\Phi(u)|^2/b^2}\in L^1(\mathbb{R}^3)$, since $\Phi(u)\in K_b(\mathbb{R}^3)$ and $\psi\in M_b(\mathbb{R}^3)$. By dominated convergence as $t\to0^+$ we obtain~\eqref{eq:var-ineq}.
Conversely, if $\phi\in K_b(\mathbb{R}^3)$ satisfies~\eqref{eq:var-ineq}, the convexity of $\xi\mapsto1-\sqrt{1-|\xi|^2/b^2}$ gives, for all $\psi\in M_b(\mathbb{R}^3)$,
\[
F_u(\psi)\ge F_u(\phi)+\frac{1}{4\pi}\int_{\mathbb{R}^3}\frac{\nabla\phi\cdot(\nabla\psi-\nabla\phi)}{\sqrt{1-|\nabla\phi|^2/b^2}}\,{\rm d}x+\int_{\mathbb{R}^3}(\omega+\phi)(\psi-\phi)u^2\,{\rm d}x\ge F_u(\phi).
\]
Thus, $\phi$ is the unique minimizer of $F_u$.
\end{proof}

\begin{lemma}\label{lem:Phiu2}
Let $b>0$ and $u\in L^2(\mathbb{R}^3)$. Let $\Phi(u)\in K_b(\mathbb{R}^3)$ be the minimizer of $F_u$. We have
\begin{equation}\label{eq:Phi-est2}
-\omega\le\Phi(u)\le 0\quad\text{a.e.\ in $\mathbb{R}^3$.}
\end{equation}
Moreover, the map $\Phi\colon L^2(\mathbb{R}^3)\to L^\infty_{\mathrm{loc}}(\mathbb{R}^3)$ is continuous.
\end{lemma}

\begin{proof}
To prove~\eqref{eq:Phi-est2}, we argue as in the proof of Lemma~\ref{lem:Phiu}. Indeed, the truncation
\[
\widetilde\Phi(u)\coloneqq
\min\{\max\{\Phi(u),-\omega\},0\}
\]
belongs to $M_b(\mathbb{R}^3)$ and satisfies
\[
|\nabla\widetilde\Phi(u)|\le|\nabla\Phi(u)|
\quad\text{and}\quad
|\omega+\widetilde\Phi(u)|\le|\omega+\Phi(u)|
\quad\text{a.e.\ in $\mathbb{R}^3$}.
\]
Consequently,
\[
F_u(\widetilde\Phi(u))\le F_u(\Phi(u)).
\]
Since $\Phi(u)$ is the unique minimizer of $F_u$, we conclude that
$\widetilde\Phi(u)=\Phi(u)$ a.e.\ in $\mathbb{R}^3$,
which implies~\eqref{eq:Phi-est2}.

By~\eqref{eq:estimate}, for all $u\in L^2(\mathbb{R}^3)$ we have
\begin{align}
\|\nabla\Phi(u)\|_2^2&\le\int_{\mathbb{R}^3}\frac{|\nabla\Phi(u)|^2}{\sqrt{1-|\nabla\Phi(u)|^2/b^2}}\,{\rm d}x\nonumber\\
&\le-4\pi\omega\int_{\mathbb{R}^3}\Phi(u)u^2\,{\rm d}x-4\pi\int_{\mathbb{R}^3}\Phi(u)^2u^2\,{\rm d}x\nonumber\\
&\le-4\pi\omega\int_{\mathbb{R}^3}\Phi(u)u^2\,{\rm d}x\le4\pi\omega^2 \|u\|_2^2.\label{eq:L2-bound}
\end{align}
Let now $u_n\to u$ in $L^2(\mathbb{R}^3)$ as $n\to\infty$. By~\eqref{eq:L2-bound} the sequence $(\Phi(u_n))_n$ is bounded in $D^{1,2}(\mathbb{R}^3)$, so up to a subsequence $\Phi(u_n)\rightharpoonup\phi$ in $D^{1,2}(\mathbb{R}^3)$ as $n\to\infty$. By Lemma~\ref{lem:weak-closed} we get $\phi\in M_b(\mathbb{R}^3)$, and by Lemma~\ref{lem:conv-unif} we obtain $\Phi(u_n)\to\phi$ in $L^\infty_{\mathrm{loc}}(\mathbb{R}^3)$. It remains to show $\phi=\Phi(u)$. For every $\psi\in M_b(\mathbb{R}^3)$ we have $F_{u_n}(\Phi(u_n))\le F_{u_n}(\psi)$. By dominated convergence $F_{u_n}(\psi)\to F_u(\psi)$, while arguing as in Lemma~\ref{lem:weak-semi} gives $F_u(\phi)\le\liminf_n F_{u_n}(\Phi(u_n))$. Hence $F_u(\phi)\le F_u(\psi)$ for all $\psi\in M_b(\mathbb{R}^3)$, and by uniqueness of the minimizer we get $\phi=\Phi(u)$.
\end{proof}

For $R\in\operatorname{O}(3)$ and $v\colon\mathbb{R}^3\to\mathbb{R}$ we write
\begin{equation}\label{eq:vR}
v_R(x)\coloneqq v(Rx)\quad\text{for $x\in \mathbb{R}^3$},
\end{equation}
and we recall that $v$ is radially symmetric if and only if $v_R=v$ for all $R\in\operatorname{O}(3)$. The following result, whose proof follows~\cite[Theorem~1.4]{BdAP16}, is where radial symmetry is used to turn the minimizer into a weak solution.

\begin{proposition}
\label{prop:BI-weak-sol}
Let $b>0$ and let $u\in L^2(\mathbb{R}^3)$ be radially symmetric. Let $\Phi(u)\in K_b(\mathbb{R}^3)$ be the minimizer of $F_u$. Then $\Phi(u)$ is radially symmetric and is a weak solution of~\eqref{eq:wsol-b2}.
\end{proposition}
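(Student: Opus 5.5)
Radial symmetry comes from uniqueness. For $R\in\operatorname{O}(3)$ the map $\phi\mapsto\phi_R$ preserves $M_b(\mathbb{R}^3)$ and the Born--Infeld integral. Since $u_R=u$, the change of variables $y=Rx$ gives $F_u(\phi_R)=F_u(\phi)$. Hence $\Phi(u)_R$ is also a minimizer, and Proposition~\ref{prop:exist-min} forces $\Phi(u)_R=\Phi(u)$ for every $R$. In particular $\Phi(u)(x)=\varphi(|x|)$ with $\varphi$ Lipschitz, $|\varphi'|\le b$, and $\nabla\Phi(u)(x)=\varphi'(r)x/r$.

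For the weak formulation I would follow~\cite[Theorem~1.4]{BdAP16} and reduce to the radial ODE. Write $\phi=\Phi(u)$ and $\rho(r)=(\omega+\varphi(r))u^2$, which is radial, nonnegative by~\eqref{eq:Phi-est2}, and integrable. I expect $\phi$ to satisfy the integrated Gauss law
\[
\frac{\varphi'(r)}{\sqrt{1-\varphi'(r)^2/b^2}}=\frac{4\pi}{r^2}\int_0^r\rho(\tau)\tau^2\,{\rm d}\tau=:\frac{Q(r)}{r^2}\quad\text{for a.e.\ }r>0 .
\]
To prove it, consider the explicit candidate $\chi$ defined by $\chi'(r)=g(Q(r)/r^2)$ and $\chi(\infty)=0$, where $g(z)=bz/\sqrt{b^2+z^2}$ inverts $\xi\mapsto\xi/\sqrt{1-\xi^2/b^2}$. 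Then $|\chi'|<b$ everywhere, so the contact set is empty, and $\chi'(r)\le Q(\infty)/r^2$ gives $\chi\in D_\star(\mathbb{R}^3)$.

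The next step is to check that $\chi$ satisfies~\eqref{eq:wsol-b2} tested against all $\psi\in D_\star(\mathbb{R}^3)$, with $\rho$ frozen as the given source. A general $\psi$ is not radial, so I would average it: set $\bar\psi=\int_{\operatorname{O}(3)}\psi_R\,{\rm d}R$ (Haar measure). Both sides of~\eqref{eq:wsol-b2} for the radial field $\chi$ and the radial density $\rho$ depend on $\psi$ only through $\bar\psi$, because the gradient term equals $\int g(\cdot)\,\partial_r\psi$. For radial test functions the identity is an integration by parts in $r$ against $r^2Q'=4\pi\rho r^2$. The boundary terms vanish because $\psi\in L^6(\mathbb{R}^3)$ and $Q$ is bounded; near $0$ this follows from $Q(r)=O(r^3)$ with some care, and at infinity from an approximation of $\psi$ by cut-offs as in Lemma~\ref{lem:D}. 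This averaging is how radiality upgrades the equation from radial directions to all of $D_\star(\mathbb{R}^3)$.

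It remains to identify $\chi$ with $\phi$, and I expect this to be the main obstacle. The difficulty is that $\rho$ depends on $\phi$, so the uniqueness of Proposition~\ref{prop:exist-min} does not apply directly; one must pass through the linearized functional. I would use the convexity argument from the end of Proposition~\ref{prop:uniq}. The variational inequality~\eqref{eq:var-ineq} holds for $\phi$, and the equation holds for $\chi$ with source $\rho$. Testing the first with $\psi=\chi$ and the second with $\psi=\chi-\phi\in D_\star(\mathbb{R}^3)$, then adding, gives
\[
\int_{\mathbb{R}^3}\bigl(a(\nabla\phi)-a(\nabla\chi)\bigr)\cdot(\nabla\phi-\nabla\chi)\,{\rm d}x\le0,\qquad a(\xi)=\frac{\xi}{\sqrt{1-|\xi|^2/b^2}} .
\]
The source terms cancel precisely because $\chi$ was built with $\rho=(\omega+\phi)u^2$. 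All integrals are finite because $\phi\in K_b(\mathbb{R}^3)$ and $\chi$ has strictly subcritical gradient, using the splitting of Remark~\ref{rem:wd}. Strict monotonicity of $a$ on $\{|\xi|<b\}$, which is a.e.\ by $|A|=0$, then yields $\nabla\phi=\nabla\chi$ a.e., and both functions lie in $L^6$, so $\phi=\chi$. Consequently $\phi$ inherits~\eqref{eq:wsol-b2} for every $\psi\in D_\star(\mathbb{R}^3)$. The step needing the most care is the integrability of the cross term $a(\nabla\phi)\cdot\nabla\chi$, which is handled by the estimate in Remark~\ref{rem:wd}.
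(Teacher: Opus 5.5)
Your argument is correct, but the key step follows a different route from the paper's. The paper derives the Euler--Lagrange equation directly from minimality. It perturbs $\Phi(u)$ along radial test functions $\Psi_n$ whose derivative is switched off on the near-contact sets $A_n=\{|\phi_u'|\ge b-1/n\}$, so that $\Phi(u)+t\Psi_n\in M_b(\mathbb{R}^3)$ for small $|t|$ of either sign. Passing to the limit gives~\eqref{eq:rad2} for radial $\Psi\in C_c^\infty(\mathbb{R}^3)$, which it extends by density to radial $\Psi\in M_b(\mathbb{R}^3)$. Testing with $\Psi=\Phi(u)$ then gives the energy identity~\eqref{eq:rad3}. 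Only at the end does it drop radiality of the test function, by inserting~\eqref{eq:rad3} into~\eqref{eq:var-ineq}, which holds for all $\psi\in M_b(\mathbb{R}^3)$, and using $\pm\Psi$.

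You instead integrate Gauss's law explicitly with the source frozen. You handle non-radial test functions by $\operatorname{O}(3)$-averaging against the explicit radial solution $\chi$. You then identify $\chi$ with $\Phi(u)$ by a monotonicity argument that uses~\eqref{eq:var-ineq} only once, with $\psi=\chi$. This avoids the cut-off construction on $A_n$ and the density step the paper delegates to~\cite[Lemma~2.10]{BdAP16}. It also yields more: $a(\varphi'(r))=Q(r)/r^2$ shows $|\nabla\Phi(u)(x)|<b$ for every $x\neq0$, not just outside a null set. The paper's route, in turn, never needs the one-dimensional integration and stays within the variational framework.

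Two details need fixing; neither affects the argument.
\begin{itemize}
\item Since $u$ is only in $L^2(\mathbb{R}^3)$, $\rho$ is only integrable. So $Q(r)=O(r^3)$ fails in general, and $Q(r)/r^2$ may blow up at the origin. The boundary term at $0$ needs only $Q(r)\to0$ together with the boundedness of $\bar\psi$.
\item For the same reason, $\chi'(r)$ may tend to $b$ as $r\to0^+$, so $\nabla\chi$ is strictly subcritical only pointwise. Before applying Remark~\ref{rem:wd} to the cross term $a(\nabla\chi)\cdot\nabla\phi$, you must check that $\chi\in K_b(\mathbb{R}^3)$. This holds because $g(z)\le\min\{b,z\}$ gives
\[
\int_{\mathbb{R}^3}\frac{|\nabla\chi|^2}{\sqrt{1-|\nabla\chi|^2/b^2}}\,{\rm d}x=4\pi\int_0^\infty g\bigl(Q(r)/r^2\bigr)\,Q(r)\,{\rm d}r\le4\pi Q(\infty)\int_0^\infty\min\Bigl\{b,\frac{Q(\infty)}{r^2}\Bigr\}\,{\rm d}r<\infty .
\]
\end{itemize}
Relatedly, the radial weight in the gradient term is $a(\chi')=Q(r)/r^2$, not $g$.
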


\begin{proof}
Since $u$ is radial, for every $R\in\operatorname{O}(3)$ the function $(\Phi(u))_R$ is the minimizer of $F_{u_R}=F_u$, so by uniqueness $(\Phi(u))_R=\Phi(u)$, namely $\Phi(u)$ is radially symmetric. In particular, there exists $\phi_u \colon [0,\infty)\to \mathbb{R}$ such that
\[
\Phi(u)(x)=\phi_u(|x|)\quad\text{for $x\in \mathbb{R}^3$}.
\]
Therefore, we get
\[
|\nabla \Phi(u)(x)|=|\phi_u'(|x|)|\quad\text{for a.e.\ $x\in \mathbb{R}^3$}.
\]

For $n\in\mathbb{N}$ we set $A_n\coloneqq\{r\in(0,\infty):|\phi_u'(r)|\ge b-\frac{1}{n}\}$. Since $A\coloneqq\{r:|\phi_u'(r)|=b\}$ has zero measure by Proposition~\ref{prop:uniq} and $\bigcap_n A_n=A$, we have $\chi_{A_n}\to0$ a.e.\ as $n\to\infty$. Let $\Psi\in C_c^\infty(\mathbb{R}^3)$ be radial, that is $\Psi(x)=\psi(|x|)$, and define
\[
\psi_n(r)\coloneqq-\int_r^\infty\psi'(s)[1-\chi_{A_n}(s)]\,{\rm d}s\quad\text{for $r\in [0,\infty)$},\qquad\Psi_n(x)\coloneqq\psi_n(|x|)\quad \text{for $x\in\mathbb{R}^3$}.
\]
By construction, $\Psi_n\in W^{1,\infty}(\mathbb{R}^3)$ has compact support. Thus $\Psi_n\in D_\star(\mathbb{R}^3)$. Moreover, we claim that, if $|t|$ is sufficiently small, then
\[
\Phi(u) + t\Psi_n \in M_b(\mathbb{R}^3).
\]
Indeed, since
\[
(\phi_u+t\psi_n)'=\phi_u' + t\psi'[1-\chi_{A_n}],
\]
then, we have
\[
|(\phi_u+t\psi_n)'(r)| = |\phi_u'(r)| \leq b\quad\text{for all $r\in A_n$}.
\]
Otherwise, taking
\[
|t| \leq \frac{1}{n\|\psi'\|_\infty},
\]
we obtain
\[
|(\phi_u+t\psi_n)'(r)|\le|\phi_u'(r)|+|t|\,\|\psi'\|_\infty<b-\frac{1}{n}+|t|\,\|\psi'\|_\infty\le b,
\]
so proving our claim.

Arguing as in Proposition~\ref{prop:uniq}, since $\Phi(u)$ minimizes $F_u$, we get, for every $n$,
\begin{equation}
\label{eq:rad1}
\int_0^\infty\frac{\phi_u'\psi'}{\sqrt{1-|\phi_u'|^2/b^2}}[1-\chi_{A_n}]\,r^2\,{\rm d}r+\int_{\mathbb{R}^3}(\omega+\Phi(u))\Psi_n u^2\,{\rm d}x=0.
\end{equation}
By dominated convergence the first integral tends to
\[
\int_0^\infty\frac{\phi_u'\psi'}{\sqrt{1-|\phi_u'|^2/b^2}}\,r^2\,{\rm d}r \quad \text{as $n \to \infty$},
\]
while $\Psi_n\to\Psi$ in $L^\infty(\mathbb{R}^3)$ gives
\[
\int_{\mathbb{R}^3}(\omega+\Phi(u))\Psi_n u^2\,{\rm d}x\to\int_{\mathbb{R}^3}(\omega+\Phi(u))\Psi u^2\,{\rm d}x \quad \text{as $n \to \infty$}.
\]
Passing to the limit in~\eqref{eq:rad1} we get
\begin{equation}\label{eq:rad2}
\int_{\mathbb{R}^3}\frac{\nabla\Phi(u)\cdot\nabla\Psi}{\sqrt{1-|\nabla\Phi(u)|^2/b^2}}\,{\rm d}x=-4\pi\int_{\mathbb{R}^3}(\omega+\Phi(u))\Psi u^2\,{\rm d}x,
\end{equation}
for all radial $\Psi\in C_c^\infty(\mathbb{R}^3)$. By approximation with radial functions in $C_c^\infty(\mathbb R^3)$, converging in $D^{1,2}(\mathbb R^3)$ and uniformly, with uniformly bounded gradients, and arguing as in the proof of~\cite[Lemma~2.10]{BdAP16}, equation~\eqref{eq:rad2} holds for every radial $\Psi\in M_b(\mathbb R^3)$. Choosing $\Psi=\Phi(u)$ gives
\begin{equation}
\label{eq:rad3}
\int_{\mathbb{R}^3}\frac{|\nabla\Phi(u)|^2}{\sqrt{1-|\nabla\Phi(u)|^2/b^2}}\,{\rm d}x=-4\pi\int_{\mathbb{R}^3}(\omega+\Phi(u))\Phi(u)u^2\,{\rm d}x.
\end{equation}
Combining~\eqref{eq:rad3} with~\eqref{eq:var-ineq}, for every $\Psi\in M_b(\mathbb{R}^3)$ we obtain
\[
-\int_{\mathbb{R}^3}\frac{\nabla\Phi(u)\cdot\nabla\Psi}{\sqrt{1-|\nabla\Phi(u)|^2/b^2}}\,{\rm d}x\le4\pi\int_{\mathbb{R}^3}(\omega+\Phi(u))\Psi u^2\,{\rm d}x.
\]
Applying this to $\Psi$ and to $-\Psi$ yields equality. Finally, given $\psi\in D_\star(\mathbb{R}^3)$, the choice $\Psi=b\psi/\|\nabla\psi\|_\infty\in M_b(\mathbb{R}^3)$ shows that $\Phi(u)$ is a weak solution of~\eqref{eq:wsol-b2}.
\end{proof}

\section{The variational scheme and proof of the main result}\label{sec:5}
\subsection{The reduced functional}\label{sec:5.1}
From now on the two models are treated together. We write $G_u$ for the Born--Infeld energy of the model under consideration, that is $G_u=E_u$ as in~\eqref{eq:E} in the approximate case and $G_u=F_u$ as in~\eqref{eq:E2} in the exact case. In both cases $\Phi(u)$ denotes its unique minimizer and
\begin{equation}\label{eq:G0}
G_u(0)=\frac{\omega^2}{2}\|u\|_2^2.
\end{equation}
We define the reduced functional $J\colon H^1(\mathbb{R}^3)\to\mathbb{R}$, as
\begin{equation}
\label{eq:J}
J(u)\coloneqq\frac{1}{2}\mathcal{B}_\alpha(u,u)+\frac{m^2}{2}\|u\|_2^2-\frac{1}{p}\|u\|_p^p-G_u(\Phi(u)).
\end{equation}
We set
\[
H^1_r(\mathbb{R}^3)\coloneqq\{u\in H^1(\mathbb{R}^3):u_R=u\text{ for all $R\in\operatorname{O}(3)$}\},
\]
where $u_R$ is defined in~\eqref{eq:vR}.

\begin{lemma}\label{lem:J-C1}
Let $\beta>0$ (resp.\ $b>0$). Then $J\in C^1(H^1(\mathbb{R}^3))$ and for all $u,v\in H^1(\mathbb{R}^3)$ we have
\begin{equation}\label{eq:der-J}
J'(u)[v]=\mathcal{B}_\alpha(u,v)+m^2\int_{\mathbb{R}^3}uv\,{\rm d}x-\int_{\mathbb{R}^3}|u|^{p-2}uv\,{\rm d}x-\int_{\mathbb{R}^3}(\omega+\Phi(u))^2uv\,{\rm d}x.
\end{equation}
Moreover, the functional $J$ is even and $J(0)=0$. Finally,
\renewcommand{\labelenumi}{{\roman{enumi})}}
\begin{enumerate}
\item in the approximate case, a pair $(u,\phi)\in H^1(\mathbb{R}^3)\times D(\mathbb{R}^3)$ is a weak solution of the system if and only if $\phi=\Phi(u)$ and $u$ is a critical point of $J$;
\item in the exact case, a pair $(u,\phi)\in H^1_r(\mathbb{R}^3)\times K_b(\mathbb{R}^3)$ is a weak solution of the system if and only if $\phi=\Phi(u)$ and $u$ is a critical point of $J$.
\end{enumerate}
\end{lemma}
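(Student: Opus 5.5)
The plan is to write $J(u)=\frac12\mathcal{B}_\alpha(u,u)+\frac{m^2}{2}\|u\|_2^2-\frac1p\|u\|_p^p-\mathcal{G}(u)$ with $\mathcal{G}(u)\coloneqq G_u(\Phi(u))=\min G_u$. The first three terms are standard $C^1$ functionals on $H^1(\mathbb{R}^3)$: $\mathcal{B}_\alpha$ is bounded bilinear, and $u\mapsto\|u\|_p^p$ is $C^1$ by Proposition~\ref{prop:embedding}. So everything reduces to showing that $\mathcal{G}$ is $C^1$ with
\[
\mathcal{G}'(u)[v]=\int_{\mathbb{R}^3}(\omega+\Phi(u))^2uv\,{\rm d}x.
\]
This is a Danskin/envelope argument. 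Only the last term of $G_u$ depends on $u$, and the admissible set ($D(\mathbb{R}^3)$ or $M_b(\mathbb{R}^3)$) does not depend on $u$.

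For $u,v\in H^1(\mathbb{R}^3)$ and $t\neq0$, minimality of $\Phi(u)$ and of $\Phi(u+tv)$ gives two one-sided bounds:
\begin{align*}
\mathcal{G}(u+tv)-\mathcal{G}(u)&\le G_{u+tv}(\Phi(u))-G_u(\Phi(u))=\tfrac12\int_{\mathbb{R}^3}(\omega+\Phi(u))^2\big(2tuv+t^2v^2\big)\,{\rm d}x,\\
\mathcal{G}(u+tv)-\mathcal{G}(u)&\ge\tfrac12\int_{\mathbb{R}^3}(\omega+\Phi(u+tv))^2\big(2tuv+t^2v^2\big)\,{\rm d}x.
\end{align*}
Next we divide by $t$ and let $t\to0^\pm$. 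The bounds $0\le\omega+\Phi\le\omega$ from \eqref{eq:Phi-est} and \eqref{eq:Phi-est2} give domination by $\omega^2(|uv|+|t|v^2)\in L^1$. The continuity of $\Phi$ gives a.e.\ convergence along subsequences: into $D(\mathbb{R}^3)\hookrightarrow L^\infty$ in the approximate case (Lemma~\ref{lem:Phiu}), and into $L^\infty_{\rm loc}$ in the exact case (Lemma~\ref{lem:Phiu2}). Since the limit is unique, the whole family converges. This yields Gateaux differentiability with the stated derivative.

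For continuity of $u\mapsto\mathcal{G}'(u)$ in $H^{-1}$, take $u_n\to u$ in $H^1(\mathbb{R}^3)$. Then
\[
\sup_{\|v\|_{H^1}\le1}\Big|\int_{\mathbb{R}^3}\big[(\omega+\Phi(u_n))^2u_n-(\omega+\Phi(u))^2u\big]v\,{\rm d}x\Big|\le\omega^2\|u_n-u\|_2+\big\|\big[(\omega+\Phi(u_n))^2-(\omega+\Phi(u))^2\big]u\big\|_2.
\]
The last term tends to $0$ by dominated convergence, with bound $4\omega^4u^2$, along subsequences where $\Phi(u_n)\to\Phi(u)$ a.e.; the subsequence principle again gives convergence of the full sequence. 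This step is the main obstacle in the exact case, where only local uniform convergence of $\Phi$ is available. The uniform bound $|\Phi|\le\omega$ is what lets us dispense with any global control. Together with the continuity of the Gateaux derivative, this gives $J\in C^1$ and \eqref{eq:der-J}.

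The remaining claims are short. Evenness follows from $G_{-u}=G_u$, so $\Phi(-u)=\Phi(u)$. For $J(0)=0$, note that $\Phi(0)=0$ since $G_0\ge0=G_0(0)$, hence $\mathcal{G}(0)=0$.

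For the characterization of solutions, in case i), Lemma~\ref{lem:Phiu} states that $\Phi(u)$ is the unique $\phi\in D(\mathbb{R}^3)$ satisfying \eqref{eq:wsol-a2}. Comparing \eqref{eq:der-J} with \eqref{eq:wsol-a1} then shows that \eqref{eq:wsol-a1} with $\phi=\Phi(u)$ is exactly $J'(u)=0$.

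In case ii), let $u$ be radial. By Proposition~\ref{prop:BI-weak-sol}, $\Phi(u)$ satisfies \eqref{eq:wsol-b2}, which gives the ``if'' direction. Conversely, suppose $\phi\in K_b(\mathbb{R}^3)$ satisfies \eqref{eq:wsol-b2}. Testing with $\psi-\phi\in D_\star(\mathbb{R}^3)$ for $\psi\in M_b(\mathbb{R}^3)$ yields \eqref{eq:var-ineq} with equality. Proposition~\ref{prop:uniq} then gives $\phi=\Phi(u)$, and we conclude as in case i).
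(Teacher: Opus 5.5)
Your proposal is correct and follows essentially the same route as the paper. The derivative comes from the two one-sided bounds given by the minimality of $\Phi(u)$ and $\Phi(u+tv)$, with dominated convergence based on $0\le\omega+\Phi\le\omega$ and on the continuity of $\Phi$ into $D(\mathbb{R}^3)$, resp.\ $L^\infty_{\mathrm{loc}}(\mathbb{R}^3)$; the characterization of weak solutions then follows from Lemma~\ref{lem:Phiu}, Proposition~\ref{prop:uniq} and Proposition~\ref{prop:BI-weak-sol}. You also fill in details the paper leaves implicit: the explicit estimate for the continuity of $J'$, evenness and $J(0)=0$, and the test with $\psi-\phi$ that turns \eqref{eq:wsol-b2} into \eqref{eq:var-ineq} with equality in the exact case.
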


\begin{proof}
Write $J=J_1-G_{(\cdot)}(\Phi(\cdot))$, where $J_1(u)=\frac{1}{2}\mathcal{B}_\alpha(u,u)+\frac{m^2}{2}\|u\|_2^2-\frac{1}{p}\|u\|_p^p$ is of class $C^1$ with the expected derivative. It thus suffices to show that, for all $u,v\in H^1(\mathbb{R}^3)$,
\begin{equation}\label{eq:lim-G}
\lim_{t\to0}\frac{G_{u+t v}(\Phi(u+t v))-G_u(\Phi(u))}{t}=\int_{\mathbb{R}^3}(\omega+\Phi(u))^2uv\,{\rm d}x.
\end{equation}
On the one hand, since $\Phi(u+t v)$ minimizes $G_{u+t v}$,
\begin{align*}
&G_{u+t v}(\Phi(u+t v))-G_u(\Phi(u))-t\int_{\mathbb{R}^3}(\omega+\Phi(u))^2uv\,{\rm d}x\\
&\le G_{u+t v}(\Phi(u))-G_u(\Phi(u))-t\int_{\mathbb{R}^3}(\omega+\Phi(u))^2uv\,{\rm d}x=\frac{t^2}{2}\int_{\mathbb{R}^3}(\omega+\Phi(u))^2v^2\,{\rm d}x=o(t),
\end{align*}
as $t\to 0$,
where we used that $G_{u+t v}(\Phi(u))-G_u(\Phi(u))=\frac{1}{2}\int(\omega+\Phi(u))^2[(u+t v)^2-u^2]\,{\rm d}x$, the gradient terms cancelling. On the other hand, since $\Phi(u)$ minimizes $G_u$, we get
\begin{align*}
&G_{u+t v}(\Phi(u+t v))-G_u(\Phi(u))-t\int_{\mathbb{R}^3}(\omega+\Phi(u))^2uv\,{\rm d}x\\
&\ge G_{u+t v}(\Phi(u+t v))-G_u(\Phi(u+t v))-t\int_{\mathbb{R}^3}(\omega+\Phi(u))^2uv\,{\rm d}x\\
&=\frac{t^2}{2}\int_{\mathbb{R}^3}(\omega+\Phi(u+t v))^2v^2\,{\rm d}x+t\int_{\mathbb{R}^3}(\Phi(u+t v)-\Phi(u))(2\omega+\Phi(u+t v)+\Phi(u))uv\,{\rm d}x.
\end{align*}
We claim that both terms on the right-hand side are $o(t)$ as $t\to 0$. Indeed, by Lemma~\ref{lem:Phiu} in the approximate case, and by Lemma~\ref{lem:Phiu2} in the exact case, we have
\begin{equation}\label{eq:u+v}
\|\omega+\Phi(u+t v)\|_\infty\le \omega\quad\text{for all $t\in\mathbb{R}$}.
\end{equation}
Hence
\[
\frac{t^2}{2}\int_{\mathbb{R}^3}(\omega+\Phi(u+t v))^2v^2\,{\rm d}x\le\frac{t^2\omega^2}{2}\|v\|_2^2=o(t)\quad\text{as $t\to0$}.
\]
Next, we show that
\[
\int_{\mathbb{R}^3}(\Phi(u+t v)-\Phi(u))(2\omega+\Phi(u+t v)+\Phi(u))uv\,{\rm d}x\to0\quad\text{as $t\to0$}.
\]
In both cases, the continuity properties of $\Phi$ give $\Phi(u+tv)\to\Phi(u)$ locally uniformly as $t\to 0$. Since $-\omega\le\Phi(u+tv),\Phi(u)\le 0$, the integrand is bounded in absolute value by $2\omega^2|uv|\in L^1(\mathbb R^3)$, so dominated convergence applies. Therefore both terms are $o(t)$, which proves~\eqref{eq:lim-G} and~\eqref{eq:der-J}. The continuity of this derivative follows from the continuity properties of $\Phi$, the bound $-\omega\le \Phi(u)\le 0$, and dominated convergence. Hence $J\in C^1(H^1(\mathbb{R}^3))$.

Finally, if $(u,\phi)$ is a weak solution then $\phi$ solves the field equation, hence $\phi=\Phi(u)$ by Lemma~\ref{lem:Phiu} (approximate case) or Proposition~\ref{prop:uniq} (exact case), and~\eqref{eq:wsol-a1} together with~\eqref{eq:der-J} says exactly that $u$ is a critical point of $J$. Conversely, if $u$ is a critical point of $J$, then $(u,\Phi(u))$ solves the field equation, by Lemma~\ref{lem:Phiu} in the approximate case, and by Proposition~\ref{prop:BI-weak-sol} in the exact case provided $u$ is radial, and~\eqref{eq:der-J} gives~\eqref{eq:wsol-a1}.
\end{proof}

To find critical points of $J$ we restrict it to the radial subspace $H^1_r(\mathbb{R}^3)$ defined above.

\begin{lemma}\label{lem:sym-crit}
Let $\beta>0$ (resp.\ $b>0$). Then $u\in H^1_r(\mathbb{R}^3)$ is a critical point of $J|_{H^1_r(\mathbb{R}^3)}$ if and only if it is a critical point of $J$.
\end{lemma}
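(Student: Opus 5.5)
This is Palais' Principle of Symmetric Criticality for the isometric action of $\operatorname{O}(3)$ on $H^1(\mathbb{R}^3)$ given by $u\mapsto u_R$. One direction is trivial: if $J'(u)=0$ on $H^1(\mathbb{R}^3)$, then in particular $J'(u)[v]=0$ for $v\in H^1_r(\mathbb{R}^3)$. For the converse, the plan is to verify that $J$ is $\operatorname{O}(3)$-invariant and then either quote Palais' theorem or give the short direct argument below, which works because $H^1$ is a Hilbert space and the action is by linear isometries.

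\emph{Step 1: invariance.} I check that $J(u_R)=J(u)$ for every $R\in\operatorname{O}(3)$.
\begin{itemize}
\item The map $u\mapsto u_R$ is a linear isometry of $H^1(\mathbb{R}^3)$ and of every $L^q$, so $\|u_R\|_2=\|u\|_2$ and $\|u_R\|_p=\|u\|_p$.
\item Since $\mathcal{F}(u_R)=(\mathcal{F}u)_R$ and $|R\xi|=|\xi|$, we get $\|(-\Delta)^{s/2}u_R\|_2=\|(-\Delta)^{s/2}u\|_2$. Hence $\mathcal{B}_\alpha(u_R,u_R)=\mathcal{B}_\alpha(u,u)$.
\item For the field term, $\phi\mapsto\phi_R$ is a bijection of $D(\mathbb{R}^3)$, and of $M_b(\mathbb{R}^3)$, because $|\nabla(\phi_R)|=|\nabla\phi|\circ R$. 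It satisfies $G_{u_R}(\phi_R)=G_u(\phi)$.
\item By uniqueness of the minimizer (Lemma~\ref{lem:Phiu} and Proposition~\ref{prop:exist-min}), $\Phi(u_R)=\Phi(u)_R$. Therefore $G_{u_R}(\Phi(u_R))=G_u(\Phi(u))$.
\end{itemize}

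\emph{Step 2: equivariance of the gradient.} Differentiating $J(u_R)=J(u)$ gives $J'(u_R)[v_R]=J'(u)[v]$. The same identity can be read off \eqref{eq:der-J} together with $\Phi(u_R)=\Phi(u)_R$. Let $\nabla J(u)\in H^1(\mathbb{R}^3)$ be the Riesz representative of $J'(u)$. Since $u\mapsto u_R$ is an orthogonal map on $H^1$, it follows that $\nabla J(u_R)=(\nabla J(u))_R$. If $u\in H^1_r(\mathbb{R}^3)$, then $u_R=u$, so $\nabla J(u)$ is fixed by every $R$, i.e.\ $\nabla J(u)\in H^1_r(\mathbb{R}^3)$.

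\emph{Step 3: conclusion.} Now let $u$ be critical for $J|_{H^1_r}$. Then $J'(u)[v]=(\nabla J(u),v)_{H^1}=0$ for all $v\in H^1_r(\mathbb{R}^3)$. Choosing $v=\nabla J(u)$, which is admissible by Step 2, gives $\|\nabla J(u)\|_{H^1}^2=0$, so $u$ is critical for $J$.

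\emph{Main obstacle.} The only genuinely nontrivial point is the invariance $\Phi(u_R)=\Phi(u)_R$. In the exact case this requires checking that rotations preserve the constraint set $M_b(\mathbb{R}^3)$ and leave $F_u$ unchanged. This is the same argument already used at the start of the proof of Proposition~\ref{prop:BI-weak-sol}, applied now to a general $u$ and $u_R$ rather than only to radial $u$. Everything else relies only on $J\in C^1$ from Lemma~\ref{lem:J-C1}.
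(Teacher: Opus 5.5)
Your proposal is correct and follows the paper's proof. Both establish $\operatorname{O}(3)$-invariance of $J$, with the key point $\Phi(u_R)=(\Phi(u))_R$ following from uniqueness of the minimizer of $G_{u_R}$, and then invoke Palais' Principle of Symmetric Criticality. The only difference is that you also write out the short Riesz-representative proof of that principle for this linear isometric action on the Hilbert space $H^1(\mathbb{R}^3)$, where the paper simply cites Palais.
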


\begin{proof}
Every critical point of $J$ in $H^1_r(\mathbb{R}^3)$ is clearly a critical point of $J|_{H^1_r(\mathbb{R}^3)}$. For the converse, by the Principle of Symmetric Criticality~\cite{Palais}, it suffices to show that $J$ is invariant under $\operatorname{O}(3)$. For $R\in\operatorname{O}(3)$ we have $\Phi(u_R)=(\Phi(u))_R$, since $(\Phi(u))_R$ minimizes $G_{u_R}$. Hence $G_{u_R}(\Phi(u_R))=G_{u_R}((\Phi(u))_R)=G_u(\Phi(u))$. As $\|\nabla u_R\|_2=\|\nabla u\|_2$, $\|(-\Delta)^{\frac{s}{2}}u_R\|_2=\|(-\Delta)^{\frac{s}{2}}u\|_2$ and $\|u_R\|_q=\|u\|_q$ for every $q$, each term of~\eqref{eq:J} is $\operatorname{O}(3)$--invariant, so $J(u_R)=J(u)$. Indeed, for all $R\in \operatorname{O}(3)$ we have
\begin{align*}
J(u_R)&=\frac{1}{2}\|\nabla u_R\|_2^2+\frac{\alpha}{2}\|(-\Delta)^\frac{s}{2}u_R\|_2^2+\frac{m^2}{2}\|u_R\|_2^2-\frac{1}{p}\|u_R\|_p^p-G_{u_R}(\Phi(u_R))\\
&=\frac{1}{2}\|\nabla u\|_2^2+\frac{\alpha}{2}\|(-\Delta)^\frac{s}{2}u\|_2^2+\frac{m^2}{2}\|u\|_2^2-\frac{1}{p}\|u\|_p^p-G_{u_R}((\Phi(u))_R)\\
&=\frac{1}{2}\|\nabla u\|_2^2+\frac{\alpha}{2}\|(-\Delta)^\frac{s}{2}u\|_2^2+\frac{m^2}{2}\|u\|_2^2-\frac{1}{p}\|u\|_p^p-G_u(\Phi(u))=J(u),
\end{align*}
which allows us to apply the Principle of Symmetric Criticality and concludes the proof.
\end{proof}

On the whole space the embedding $H^1(\mathbb{R}^3)\hookrightarrow L^p(\mathbb{R}^3)$ is continuous but, for $p\in(2,6)$, not compact, the standard obstruction to the Palais--Smale condition on $\mathbb{R}^3$. Compactness is recovered on the radial subspace, by the following classical result~\cite[Theorem~A.I$'$]{BL}, which will be used to verify the Palais--Smale condition (Proposition~\ref{prop:PS}).

\begin{proposition}\label{prop:compact}
The space $H^1_r(\mathbb{R}^3)$ is compactly embedded in $L^p(\mathbb{R}^3)$ for all $p\in (2,6)$.
\end{proposition}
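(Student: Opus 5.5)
The statement is the classical Strauss compactness lemma, quoted from~\cite[Theorem~A.I$'$]{BL}. I would prove it by combining a pointwise radial decay estimate, local compactness (Rellich), and interpolation between $L^2$ and $L^6$.

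First, the radial decay estimate. For $u\in H^1_r(\mathbb{R}^3)$ I would show the Strauss inequality
\[
|u(x)|\le C|x|^{-1}\|u\|_2^{1/2}\|\nabla u\|_2^{1/2}\quad\text{for a.e. } |x|\ge1 .
\]
By density it suffices to prove it for radial $u\in C_c^\infty(\mathbb{R}^3)$. Write $u(x)=\varphi(|x|)$ and compute
\[
r^2\varphi(r)^2=-\int_r^\infty \frac{d}{d\rho}\bigl(\rho^2\varphi^2\bigr)\,{\rm d}\rho
\le 2\int_r^\infty \rho^2|\varphi||\varphi'|\,{\rm d}\rho .
\]
Here the term $-2\rho\varphi^2$ coming from the derivative is nonpositive and can be dropped. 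The Cauchy--Schwarz inequality in $L^2(\rho^2\,{\rm d}\rho)$ then bounds the right-hand side by $C\|u\|_2\|\nabla u\|_2$.

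Next, take a bounded sequence $(u_n)_n\subset H^1_r(\mathbb{R}^3)$. Up to a subsequence $u_n\rightharpoonup u$ in $H^1(\mathbb{R}^3)$, and $H^1_r(\mathbb{R}^3)$ is weakly closed, so $u$ is radial. Set $w_n=u_n-u$, which is bounded in $H^1_r(\mathbb{R}^3)$ and tends weakly to $0$. Fix $\varepsilon>0$ and $p\in(2,6)$, and split
\[
\|w_n\|_p^p=\int_{B_R}|w_n|^p\,{\rm d}x+\int_{\mathbb{R}^3\setminus B_R}|w_n|^p\,{\rm d}x .
\]
For the exterior integral, write $|w_n|^p\le\|w_n\|_{L^\infty(\mathbb{R}^3\setminus B_R)}^{p-2}|w_n|^2$. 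By the decay estimate this is at most $C R^{-(p-2)}\sup_n\|w_n\|_{H^1}^p$. Since $p>2$, this is smaller than $\varepsilon$ uniformly in $n$ once $R$ is large. For the interior integral, Rellich--Kondrachov on $B_R$ gives $w_n\to0$ in $L^p(B_R)$, because $p<6$.

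Putting the two pieces together gives $\limsup_n\|w_n\|_p^p\le\varepsilon$, hence $u_n\to u$ in $L^p(\mathbb{R}^3)$. Alternatively, one can prove convergence in $L^q$ for a single $q\in(2,6)$ and then interpolate between the $L^2$ and $L^6$ bounds to cover the whole range.

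The step I expect to require the most care is the decay estimate. The issues are the justification by density, which needs a.e. pointwise convergence of a subsequence of the approximants, and the observation that the exponent $p-2>0$ is exactly what makes the tail small. The endpoints $p=2$ and $p=6$ genuinely fail, because of dilations and vanishing at infinity respectively.
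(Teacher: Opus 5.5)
Your argument is correct and is the classical Strauss argument behind the result that the paper does not prove but quotes from Berestycki--Lions (Theorem~A.I$'$): a radial decay bound controls the tails, using $p>2$, and Rellich--Kondrachov on balls handles the interior, using $p<6$. The only slip is in your closing aside, where the two mechanisms are swapped: at $p=6$ your own tail estimate still gives decay ($R^{-4}$), so compactness fails there by concentration at the origin (e.g.\ $n^{1/2}u(nx)$), whereas at $p=2$ it fails by mass escaping to infinity (e.g.\ $n^{-3/2}u(x/n)$).
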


\subsection{Proof of the main result}\label{subsec:proof}
By Lemmas~\ref{lem:J-C1} and~\ref{lem:sym-crit}, it suffices to find infinitely many critical points of $J|_{H^1_r(\mathbb{R}^3)}$. To reach this goal, since $J\in C^1$ is even and $J(0)=0$, we shall use the well-known equivariant Mountain Pass Theorem. We now briefly recall it.

\begin{definition}\label{def:PS}
A functional $I\in C^1(X)$ on a Banach space $X$ satisfies the Palais--Smale $(PS)$ condition if every sequence $(u_n)_n\subset X$ with $(I(u_n))_n$ bounded and $I'(u_n)\to0$ in $X'$ as $n\to\infty$ admits a convergent subsequence.
\end{definition}

\begin{theorem}[{\cite[Theorem~9.12]{Rabinowitz86}}] \label{thm:Rabinowitz}
Let $I$ be an even $C^1$ functional on an infinite dimensional Banach space $X$, with $I(0)=0$. Assume $X=X_1\oplus X_2$ with $\dim X_1<\infty$, and that:
\begin{itemize}
\item[$(i)$] there exist $\delta,\varrho>0$ such that $I(u)\ge\delta$ for all $u\in X_2$ with $\|u\|_X=\varrho$;
\item[$(ii)$] for every finite dimensional subspace $Y\subset X$ there is $R=R(Y)>0$ such that $I(u)\le0$ for all $u\in Y$ with $\|u\|_X\ge R$;
\item[$(iii)$] $I$ satisfies the $(PS)$ condition.
\end{itemize}
Then $I$ has an unbounded sequence of positive critical values.
\end{theorem}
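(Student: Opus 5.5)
The statement is the classical $\mathbb{Z}_2$--equivariant Mountain Pass Theorem, and we would reproduce Rabinowitz's minimax argument based on the Krasnoselskii genus $\gamma$. We take the genus, the odd deformation lemma under $(PS)$, and Borsuk--Ulam as standard.

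\emph{Minimax classes.} Let $j=\dim X_1$ and fix a basis $(e_i)_i$ of a dense subspace such that $X_1=\operatorname{span}\{e_1,\dots,e_j\}$. Set $E_m=\operatorname{span}\{e_1,\dots,e_m\}$ and choose radii $R_m=R(E_m)$ from $(ii)$, taken increasing and larger than $\varrho$. Let $D_m=\overline{B_{R_m}}\cap E_m$ and
\[
G_m=\{h\in C(D_m,X): h \text{ odd},\ h=\mathrm{id} \text{ on } \partial B_{R_m}\cap E_m\}.
\]
For $k\ge 1$ we take $\Gamma_k$ to be the family of sets $h(\overline{D_m\setminus Y})$, where $m\ge k$, $h\in G_m$, and $Y\subset D_m$ is closed, symmetric, with $\gamma(Y)\le m-k$. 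We then set
\[
c_k=\inf_{B\in\Gamma_k}\max_{u\in B} I(u).
\]
Three properties follow directly from the definitions and the monotonicity, subadditivity and continuity of the genus:
\begin{itemize}
\item[(a)] $\Gamma_{k+1}\subset\Gamma_k$, hence $c_k\le c_{k+1}$;
\item[(b)] $\Gamma_k$ is invariant under odd homeomorphisms that fix $\{I\le 0\}$;
\item[(c)] if $B\in\Gamma_k$ and $Z$ is closed and symmetric with $\gamma(Z)\le s<k$, then $\overline{B\setminus Z}\in\Gamma_{k-s}$.
\end{itemize}

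\emph{Lower bound via intersection.} The key geometric step, which we expect to be the main obstacle, is the intersection lemma: for $k>j$, every $B\in\Gamma_k$ meets $\partial B_\varrho\cap X_2$. For $B=h(\overline{D_m\setminus Y})$ we would consider the set
\[
\widetilde O=\{x\in D_m: \|h(x)\|<\varrho\}.
\]
It is a symmetric bounded neighbourhood of $0$ in $E_m$, because $h$ is odd and equals the identity on $\partial D_m$ with $R_m>\varrho$; let $O$ be its connected component containing $0$. Composing with the projection $P$ onto $X_1$ along $X_2$, the map $P\circ h\colon \partial O\to X_1\cong\mathbb{R}^j$ is odd. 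Borsuk--Ulam then gives $\gamma(\partial O\cap h^{-1}(X_2))\ge m-j$. Subtracting $Y$ via subadditivity leaves a point of $\partial O\setminus Y$ whose image lies in $X_2$ with norm $\varrho$. Combined with $(i)$, this yields $c_k\ge\delta>0$ for all $k>j$, and each $c_k$ is finite since $D_m$ is compact.

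\emph{Criticality, multiplicity and unboundedness.} Using $(iii)$, the odd deformation lemma shows that each $c_k$ with $k>j$ is a critical value. Otherwise, we could deform $\{I\le c_k+\varepsilon\}$ into $\{I\le c_k-\varepsilon\}$ by an odd homeomorphism fixing $\{I\le 0\}$, which contradicts (b). If $c_k=\dots=c_{k+l}=c$, the same argument applied with a symmetric neighbourhood $N$ of the compact set $K_c$ (using (c)) gives $\gamma(K_c)\ge l+1$; in particular $K_c$ is infinite when $l\ge1$. In either case we obtain infinitely many critical values or infinitely many critical points at a given level. Finally, suppose $c_k\uparrow\bar c<\infty$. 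Then $K_{\bar c}$ is compact by $(PS)$, so it has finite genus $q$ and admits a neighbourhood $N$ with $\gamma(\overline N)=q$. For $k$ large, deforming $\{I\le \bar c+\varepsilon\}\setminus N$ below $\bar c-\varepsilon$ and applying (c) yields $c_{k-q}\ge c_{k+1}$, hence $c_{k+1}\le c_{k-q}<\bar c$, which is the required contradiction. Hence $c_k\to\infty$, giving an unbounded sequence of positive critical values.
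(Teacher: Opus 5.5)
Your strategy is Rabinowitz's own proof of \cite[Theorem~9.12]{Rabinowitz86}, which the paper does not reproduce and only cites. The ingredients match: the classes $\Gamma_k$ built from $G_m$ and genus-bounded excisions, properties (a)--(c), the intersection lemma, and the odd deformation argument. Your version of the intersection lemma is a legitimate rephrasing of Rabinowitz's. You argue that the zero set of the odd map $P\circ h$ on $\partial O$ has genus at least $m-j$, so it cannot lie inside $Y$, since $\gamma(Y)\le m-k<m-j$. Rabinowitz instead shows $\gamma(\overline{\partial O\setminus Y})\ge k>j$ and notes that an odd map into $X_1\setminus\{0\}$ would force genus at most $j$. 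Two minor remarks. You do not need a basis of a dense subspace, and one need not exist when $X$ is nonseparable; any linearly independent $e_{j+1},e_{j+2},\dots$ completing a basis of $X_1$ will do. The multiplicity paragraph ($\gamma(K_c)\ge l+1$) is also not needed for the stated conclusion.

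The step that fails as written is the final one. You conclude ``$c_{k-q}\ge c_{k+1}$, hence $c_{k+1}\le c_{k-q}<\bar c$'', but this is not a contradiction. The bound $c_{k+1}<\bar c$ is compatible with $c_k\uparrow\bar c$. Combined with monotonicity, $c_{k-q}\ge c_{k+1}$ only says that $q+2$ consecutive values coincide, which is harmless at a level below $\bar c$. What the deformation really gives is an upper bound on $c_{k-q}$ that is uniform in $k$, and this needs a specific order of choices, which your sketch does not make explicit:
\begin{enumerate}
\item[1.] Fix $N$ closed and symmetric with $0\notin N$ and $\gamma(N)=q$, containing an open neighbourhood $O$ of $K_{\bar c}$.
\item[2.] Fix $\bar\varepsilon\in(0,\bar c)$. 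The deformation lemma at level $\bar c$ gives $\varepsilon\in(0,\bar\varepsilon)$ and an odd homeomorphism $\eta$ fixing $\{I\le0\}$ with $\eta(\{I\le\bar c+\varepsilon\}\setminus O)\subset\{I\le\bar c-\varepsilon\}$.
\item[3.] Only now choose $k>q$ with $c_{k-q}>\bar c-\varepsilon$.
\item[4.] Choose $B\in\Gamma_k$ with $\max_B I\le c_k+\varepsilon\le\bar c+\varepsilon$.
\end{enumerate}
By (c), $\overline{B\setminus N}\in\Gamma_{k-q}$, and this set lies in $\{I\le\bar c+\varepsilon\}\setminus O$. By (b), $\eta(\overline{B\setminus N})\in\Gamma_{k-q}$. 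Hence $c_{k-q}\le\bar c-\varepsilon$, contradicting the choice of $k$. The point is that $\varepsilon$ is chosen before $k$ and independently of it. With this correction the argument closes, $c_k\to\infty$, and the $c_k$ with $k>j$ form the required unbounded sequence of positive critical values.
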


We first check the geometric assumptions, which are common to the two models.

\begin{proposition}\label{prop:geometry}
Let $\beta>0$ (resp.\ $b>0$). Assume
\[
m^2>\omega^2\quad\text{and}\quad\alpha>-\alpha_0(s,m^2-\omega^2).
\]
Then
\begin{itemize}
\item[$(i)$] there exist $\delta,\varrho>0$ such that $J(u)\ge\delta$ for all $u\in H^1_r(\mathbb{R}^3)$ with $\|u\|_{H^1}=\varrho$;
\item[$(ii)$] for every finite dimensional subspace $Y\subset H^1_r(\mathbb{R}^3)$ there is $R=R(Y)>0$ such that $J(u)\le0$ for $u\in Y$ with $\|u\|_{H^1}\ge R$.
\end{itemize}
\end{proposition}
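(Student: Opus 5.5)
The whole argument rests on two-sided bounds for $G_u(\Phi(u))$. Since $\Phi(u)$ minimizes $G_u$, \eqref{eq:G0} gives the upper bound
\[
G_u(\Phi(u))\le G_u(0)=\frac{\omega^2}{2}\|u\|_2^2 .
\]
Since every term of $G_u$ is non-negative, we also have $G_u(\Phi(u))\ge0$. Hence, for all $u\in H^1(\mathbb{R}^3)$,
\[
\frac12\mathcal{B}_\alpha(u,u)+\frac{m^2-\omega^2}{2}\|u\|_2^2-\frac1p\|u\|_p^p\le J(u)\le \frac12\mathcal{B}_\alpha(u,u)+\frac{m^2}{2}\|u\|_2^2-\frac1p\|u\|_p^p .
\]
The field therefore only enters through these comparisons, and the same argument works for both models.

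For $(i)$, set $t\coloneqq m^2-\omega^2>0$. The hypothesis $\alpha>-\alpha_0(s,t)$ and the discussion following \eqref{eq:interp} provide $\varepsilon>0$ such that $c_1\coloneqq1-\alpha^-s\varepsilon>0$ and $c_2\coloneqq t-\alpha^-(1-s)\varepsilon^{-\frac{s}{1-s}}>0$. If $\alpha\ge0$ this holds trivially, since then $\mathcal{B}_\alpha(u,u)\ge\|\nabla u\|_2^2$. Using \eqref{eq:interp} in the lower bound yields
\[
J(u)\ge\frac{c}{2}\|u\|_{H^1}^2-\frac{C_p^p}{p}\|u\|_{H^1}^p,\qquad c\coloneqq\min\{c_1,c_2\}>0,
\]
where the last term comes from \eqref{eq:emb-H}. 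Since $p>2$, we choose $\varrho>0$ small so that $\delta\coloneqq\frac c2\varrho^2-\frac{C_p^p}{p}\varrho^p>0$. This bound holds on all of $H^1(\mathbb{R}^3)$, so in particular on the radial sphere $\{\|u\|_{H^1}=\varrho\}$.

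For $(ii)$, we use the upper bound together with $\mathcal{B}_\alpha(u,u)\le\|\nabla u\|_2^2+|\alpha|\|(-\Delta)^{s/2}u\|_2^2\le C\|u\|_{H^1}^2$, which follows from \eqref{eq:Deltas} with $\varepsilon=1$. This gives
\[
J(u)\le C'\|u\|_{H^1}^2-\frac1p\|u\|_p^p .
\]
On a finite dimensional subspace $Y$ all norms are equivalent, so $\|u\|_p\ge c_Y\|u\|_{H^1}$ for $u\in Y$, because $\|\cdot\|_p$ is a norm on $Y$. Hence $J(u)\le C'\|u\|_{H^1}^2-\frac{c_Y^p}{p}\|u\|_{H^1}^p\to-\infty$, and some $R(Y)$ works.

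I see no real obstacle. The step needing most care is making the choice of $\varepsilon$ in $(i)$ explicit: one optimizes $\varepsilon$ so that both coefficients are positive, and this is possible exactly when $\alpha>-\alpha_0(s,m^2-\omega^2)$. Note that only $m^2>\omega^2$ is needed here. The stronger threshold $\Lambda$ presumably enters later, in the Palais--Smale condition.
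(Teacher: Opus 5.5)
Your proposal is correct and follows the paper's proof essentially step for step. Both use the two-sided bound $0\le G_u(\Phi(u))\le G_u(0)=\frac{\omega^2}{2}\|u\|_2^2$. Part $(i)$ then follows from \eqref{eq:interp}, with an $\varepsilon$ making both coefficients positive, together with \eqref{eq:emb-H}; part $(ii)$ follows from \eqref{eq:Deltas} with $\varepsilon=1$ and the equivalence of norms on $Y$. Your observation that only $m^2>\omega^2$ is used here, with $\Lambda$ entering only in the Palais--Smale step, also matches how the paper separates the two propositions.
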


\begin{proof}
Since $\Phi(u)$ minimizes $G_u$, by~\eqref{eq:G0} we have
\[
G_u(\Phi(u))\le G_u(0)=\frac{\omega^2}{2}\|u\|_2^2,
\]
so by~\eqref{eq:interp}, for all $\varepsilon>0$ and $u\in H^1_r(\mathbb{R}^3)$,
\begin{align*}
J(u)&\ge\frac{1}{2}\mathcal{B}_\alpha(u,u)+\frac{m^2-\omega^2}{2}\|u\|_2^2-\frac{1}{p}\|u\|_p^p\\
&\ge\frac{1}{2}(1-\alpha^-s\varepsilon)\|\nabla u\|_2^2+\frac{1}{2}\left(m^2-\omega^2-\alpha^-(1-s)\varepsilon^{-\frac{s}{1-s}}\right)\|u\|_2^2-\frac{1}{p}\|u\|_p^p.
\end{align*}
The system
\begin{equation}\label{eq:systema}
1-\alpha^-s\varepsilon>0,\qquad m^2-\omega^2-\alpha^-(1-s)\varepsilon^{-\frac{s}{1-s}}>0
\end{equation}
is trivially satisfied if $\alpha^-=0$. Otherwise it holds for some $\varepsilon>0$ as soon as
\[
\frac{(1-s)^{\frac{1-s}{s}}(\alpha^-)^{\frac{1-s}{s}}}{(m^2-\omega^2)^{\frac{1-s}{s}}}<\frac{1}{\alpha^-s},
\]
namely $\alpha^-<\alpha_0(s,m^2-\omega^2)$,
which is our hypothesis. Hence there is $\varepsilon_0>0$ with
\begin{equation}\label{eq:c1c2}
c_1\coloneqq1-\alpha^-s\varepsilon_0>0,\qquad c_2\coloneqq m^2-\omega^2-\alpha^-(1-s)\varepsilon_0^{-\frac{s}{1-s}}>0.
\end{equation}
By~\eqref{eq:emb-H}, for $\|u\|_{H^1}=\varrho$,
\[
J(u)\ge\frac{1}{2}\min\{c_1,c_2\}\varrho^2-\frac{C_p^p}{p}\varrho^p=\varrho^2\left(\frac{1}{2}\min\{c_1,c_2\}-\frac{C_p^p}{p}\varrho^{p-2}\right)>0,
\]
provided
\[
\varrho<\left(\frac{p\min \{c_1,c_2\}}{2C_p^p}\right)^\frac{1}{p-2},
\]
which proves $(i)$. For $(ii)$, since $G_u(\Phi(u))\ge0$, by~\eqref{eq:Deltas} with $\varepsilon=1$,
\[
J(u)\le\frac{1}{2}(1+|\alpha|s)\|\nabla u\|_2^2+\frac{1}{2}(m^2+|\alpha|(1-s))\|u\|_2^2-\frac{1}{p}\|u\|_p^p\to-\infty
\]
as $\|u\|_{H^1}\to\infty$ on the finite dimensional space $Y$, since $p>2$ and all norms on $Y$ are equivalent.
\end{proof}

It remains to verify the Palais--Smale condition.

\begin{proposition}
\label{prop:PS}
Let $\beta>0$ (resp. $b>0$). Recall that
\[
\Lambda=\max\left\{\frac{2}{p-2},1\right\},
\]
and assume that
\[
m^2>\Lambda\omega^2\quad\text{and}\quad\alpha>-\alpha_0(s,m^2-\Lambda\omega^2).
\]
Then $J|_{H^1_r(\mathbb{R}^3)}$ satisfies the $(PS)$ condition.
\end{proposition}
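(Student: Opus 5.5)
The plan is to show first that a Palais--Smale sequence is bounded in $H^1(\mathbb{R}^3)$, and then to upgrade weak convergence to strong convergence. Compactness comes from Proposition~\ref{prop:compact} for the $L^p$ term, and from local uniform convergence of the potentials for the coupling term. Let $(u_n)_n\subset H^1_r(\mathbb{R}^3)$ with $J(u_n)$ bounded and $J'(u_n)\to0$ in $(H^1_r)'$, and write $\Phi_n\coloneqq\Phi(u_n)$ and $y_n\coloneqq\omega+\Phi_n\in[0,\omega]$. The last range holds by Lemma~\ref{lem:Phiu} or Lemma~\ref{lem:Phiu2}.

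\emph{Step 1: bound on the field energy.} Write $G_u(\Phi(u))=W(\Phi(u))+\frac12\int(\omega+\Phi(u))^2u^2$, where $W$ is the gradient part of $E_u$ or of $F_u$. I claim that in both models
\[
W(\Phi(u))\le-\tfrac12\int_{\mathbb{R}^3}(\omega+\Phi(u))\Phi(u)u^2\,{\rm d}x\quad\text{for radial }u.
\]
\begin{itemize}
\item In the approximate case, test~\eqref{eq:wsol-a2} with $\psi=\Phi(u)$. This gives $\|\nabla\Phi\|_2^2+\beta\|\nabla\Phi\|_4^4=-4\pi\int(\omega+\Phi)\Phi u^2$, and $W\le\frac1{8\pi}(\|\nabla\Phi\|_2^2+\beta\|\nabla\Phi\|_4^4)$.
\item In the exact case, use $1-\sqrt{1-t}=t/(1+\sqrt{1-t})\le t/(2\sqrt{1-t})$ together with~\eqref{eq:rad3}, which is valid for radial $u$.
\end{itemize}

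\emph{Step 2: boundedness.} Using~\eqref{eq:der-J} and Step~1, I compute
\[
J(u_n)-\tfrac1pJ'(u_n)[u_n]\ge\tfrac{p-2}{2p}\bigl(\mathcal{B}_\alpha(u_n,u_n)+m^2\|u_n\|_2^2\bigr)+\int_{\mathbb{R}^3}\Bigl(\tfrac{y_n^2}{p}-\tfrac{\omega y_n}{2}\Bigr)u_n^2\,{\rm d}x .
\]
Next I minimize $g(y)=y^2/p-\omega y/2$ over $y\in[0,\omega]$.
\begin{itemize}
\item If $p\ge4$, the minimum is at $y=\omega$ and equals $-\frac{p-2}{2p}\omega^2$.
\item If $p<4$, the minimum is at $y=p\omega/4$ and equals $-\frac{p}{16}\omega^2$. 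Since $p^2\le16$, this is at least $-\frac{p-2}{2p}\cdot\frac{2}{p-2}\omega^2$.
\end{itemize}
In both cases $g\ge-\frac{p-2}{2p}\Lambda\omega^2$, hence
\[
C+o(1)\|u_n\|_{H^1}\ge\tfrac{p-2}{2p}\bigl(\mathcal{B}_\alpha(u_n,u_n)+(m^2-\Lambda\omega^2)\|u_n\|_2^2\bigr).
\]
By~\eqref{eq:interp} and the hypothesis $\alpha>-\alpha_0(s,m^2-\Lambda\omega^2)$, choosing $\varepsilon$ as in~\eqref{eq:systema} with $m^2-\omega^2$ replaced by $m^2-\Lambda\omega^2$, the right-hand side is at least $c\|u_n\|_{H^1}^2$. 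Therefore $(u_n)_n$ is bounded.

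\emph{Step 3: strong convergence, the main obstacle.} Up to a subsequence, $u_n\rightharpoonup u$ in $H^1_r$ and $u_n\to u$ in $L^p$ by Proposition~\ref{prop:compact}. The difficulty is that $u_n$ need not converge in $L^2$, so the continuity of $\Phi$ from Lemmas~\ref{lem:Phiu} and~\ref{lem:Phiu2} is not directly available. Instead I use that $(\Phi_n)_n$ is bounded in $D(\mathbb{R}^3)$ in the approximate case, by the energy identity and $\|u_n\|_2$ bounded, and in $M_b(\mathbb{R}^3)$ in the exact case, by~\eqref{eq:L2-bound}. By Morrey's embedding ($W^{1,4}_{\rm loc}$, respectively Lemma~\ref{lem:conv-unif}) and Arzel\`a--Ascoli, $\Phi_n\to\phi$ locally uniformly for some $\phi$, with $-\omega\le\phi\le0$. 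Set $w_n=u_n-u$ and consider $(J'(u_n)-J'(u))[w_n]\to0$. The $p$-term vanishes by the $L^p$ convergence. The coupling term splits as
\[
\int y_n^2w_n^2\,{\rm d}x+\int\bigl(y_n^2-(\omega+\Phi(u))^2\bigr)u\,w_n\,{\rm d}x .
\]
The first piece is at most $\omega^2\|w_n\|_2^2$. For the second, I add and subtract $(\omega+\phi)^2$. The difference $y_n^2-(\omega+\phi)^2$ is handled by splitting into $B_R$, where uniform convergence applies, and $\mathbb{R}^3\setminus B_R$, where it is bounded by $2\omega^2\|u\|_{L^2(\mathbb{R}^3\setminus B_R)}\|w_n\|_2$, which is small for $R$ large. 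The term $\int\bigl((\omega+\phi)^2-(\omega+\Phi(u))^2\bigr)u\,w_n\to0$ by weak convergence of $w_n$ in $L^2$. This yields
\[
\mathcal{B}_\alpha(w_n,w_n)+(m^2-\omega^2)\|w_n\|_2^2\le o(1).
\]
Since $m^2-\omega^2\ge m^2-\Lambda\omega^2$ and $\alpha_0(s,\cdot)$ is increasing, \eqref{eq:interp} again gives $\|w_n\|_{H^1}\to0$.
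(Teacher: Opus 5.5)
Your proof is correct. It shares the paper's architecture: the combination $J(u_n)-\frac1pJ'(u_n)[u_n]$, the field equation tested with $\Phi(u_n)$ (your Step 1 is exactly the paper's inequality), radial compactness, and testing with $u_n-u$. It implements both halves differently, however.

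\textbf{Boundedness.} The paper discards $\int_{\mathbb{R}^3}\Phi(u_n)^2u_n^2\,{\rm d}x$ and then splits into $p\ge4$ and $p<4$, using $\Phi\le0$ or $\Phi\ge-\omega$. You keep that term and minimize the single quadratic $g(y)=y^2/p-\omega y/2$ over $[0,\omega]$. For $p\in(2,4)$ this is genuinely sharper than what you use. Your bound $g\ge-p\omega^2/16$ already gives boundedness under $m^2>\frac{p^2}{8(p-2)}\omega^2$ and $\alpha>-\alpha_0\bigl(s,m^2-\frac{p^2}{8(p-2)}\omega^2\bigr)$. Since $\frac{p^2}{8(p-2)}\in\bigl(1,\frac{2}{p-2}\bigr)$ for such $p$, and the geometry and strong-convergence steps only need $m^2-\omega^2$, your computation would slightly enlarge the admissible frequency range. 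You relax it to $\Lambda$ only to match the statement.

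\textbf{Strong convergence.} The paper first identifies the limit of the potentials, $\Phi(u_n)\to\Phi(u)$ in $L^\infty_{\rm loc}(\mathbb{R}^3)$. In the exact case this goes through lower semicontinuity of $F_u$ and uniqueness of the minimizer. The paper then uses symmetric criticality to show that $u$ is a critical point of $J$, and needs $\|\Phi(u_n)u_n-\Phi(u)u\|_2\to0$.

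You bypass all of this by testing $J'(u_n)-J'(u)$ with $w_n$, which is legitimate because $J'(u)$ is a fixed element of $H^{-1}(\mathbb{R}^3)$ and $w_n\rightharpoonup0$. You then absorb $\int_{\mathbb{R}^3} y_n^2w_n^2\,{\rm d}x\le\omega^2\|w_n\|_2^2$ into the coercive form with mass $m^2-\omega^2$.

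Your extraction of a locally uniform limit $\phi$ via Morrey and Arzel\`a--Ascoli is in fact superfluous. Since $0\le y_n,\ \omega+\Phi(u)\le\omega$, the cross term is bounded by $\omega^2\int_{\mathbb{R}^3}|u|\,|w_n|\,{\rm d}x$. This tends to zero because $w_n\to0$ in $L^3(\mathbb{R}^3)$, hence in $L^2(B_R)$, while $\|u\|_{L^2(\mathbb{R}^3\setminus B_R)}$ is small uniformly in $n$.

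So your route needs only the bounds $-\omega\le\Phi\le0$ and the field-energy inequality of Step 1. The paper's route yields the extra information $\Phi(u_n)\to\Phi(u)$ and that $u$ is critical, neither of which is needed for (PS).
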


\begin{proof}
Let $(u_n)_n\subset H^1_r(\mathbb{R}^3)$ be a $(PS)$ sequence. We first prove that, in both models, the following inequality holds:
\begin{align}
&pJ(u_n)-J'(u_n)[u_n]\nonumber\\
&\ge\left(\frac{p}{2}-1\right)\left[\mathcal{B}_\alpha(u_n,u_n)+(m^2-\omega^2)\|u_n\|_2^2\right]-\left(\frac{p}{2}-2\right)\int_{\mathbb{R}^3}\omega\Phi(u_n)u_n^2\,{\rm d}x.\label{eq:PSstar}
\end{align}

\paragraph{\textbf{Approximate case}}

By~\eqref{eq:J} and~\eqref{eq:der-J} we have
\[
\begin{aligned}
pJ(u_n)-J'(u_n)[u_n]=&\left(\frac{p}{2}-1\right)\left[\mathcal{B}_\alpha(u_n,u_n)+m^2\|u_n\|_2^2\right]-\frac{p}{8\pi}\|\nabla\Phi(u_n)\|_2^2\\&-\frac{p\beta}{16\pi}\|\nabla\Phi(u_n)\|_4^4-\left(\frac{p}{2}-1\right)\int_{\mathbb{R}^3}(\omega+\Phi(u_n))^2u_n^2\,{\rm d}x.
\end{aligned}
\]
Testing~\eqref{eq:wsol-a2} with $\psi=\Phi(u_n)$ gives
\[
-\frac{p}{8\pi}\|\nabla\Phi(u_n)\|_2^2-\frac{p\beta}{8\pi}\|\nabla\Phi(u_n)\|_4^4=\frac{p}{2}\int_{\mathbb{R}^3}(\omega+\Phi(u_n))\Phi(u_n)u_n^2\,{\rm d}x,
\]
and so
\begin{align*}
pJ(u_n)-J'(u_n)[u_n]&=\left(\frac{p}{2}-1\right)\left(\mathcal{B}_\alpha(u_n,u_n)+m^2\|u_n\|_2^2\right)+\frac{p\beta}{16\pi}\|\nabla\Phi(u_n)\|_4^4\\
&\quad+\frac{p}{2}\int_{\mathbb{R}^3}(\omega+\Phi(u_n))\Phi(u_n)u_n^2\,{\rm d}x-\left(\frac{p}{2}-1\right)\int_{\mathbb{R}^3}(\omega+\Phi(u_n))^2u_n^2\,{\rm d}x.
\end{align*}
By dropping the nonnegative terms
$\frac{p\beta}{16\pi}\|\nabla\Phi(u_n)\|_4^4$ and $\int_{\mathbb{R}^3}\Phi(u_n)^2u_n^2\,{\rm d}x$, the last one
arising from the algebraic identity
\[
\frac{p}{2}(\omega+\Phi)\Phi-\left(\frac{p}{2}-1\right)(\omega+\Phi)^2=\Phi^2-\left(\frac{p}{2}-2\right)\omega\Phi-\left(\frac{p}{2}-1\right)\omega^2,
\]
we obtain~\eqref{eq:PSstar}.

\paragraph{\textbf{Exact case}}

By~\eqref{eq:J} and~\eqref{eq:der-J}, we have
\[
\begin{aligned}
pJ(u_n)-J'(u_n)[u_n]&=\left(\frac{p}{2}-1\right)\left[\mathcal{B}_\alpha(u_n,u_n)+m^2\|u_n\|_2^2\right]-\frac{pb^2}{4\pi}\int_{\mathbb{R}^3}\left(1-\sqrt{1-\frac{|\nabla\Phi(u_n)|^2}{b^2}}\right){\rm d}x\\
&\quad-\left(\frac{p}{2}-1\right)\int_{\mathbb{R}^3}(\omega+\Phi(u_n))^2u_n^2\,{\rm d}x.
\end{aligned}
\]
Using 
\[
b^2\left(1-\sqrt{1-t}\right)=\frac{b^2 t}{1+\sqrt{1-t}},\quad\text{with }t=\frac{|\nabla\Phi(u_n)|^2}{b^2},
\]
we get
\[
-\frac{pb^2}{4\pi}\int_{\mathbb{R}^3}\left(1-\sqrt{1-\frac{|\nabla\Phi(u_n)|^2}{b^2}}\right){\rm d}x=-\frac{p}{4\pi}\displaystyle\int_{\mathbb{R}^3}\frac{|\nabla\Phi(u_n)|^2}{1+\sqrt{1-|\nabla\Phi(u_n)|^2/b^2}}\,{\rm d}x.
\]
Now, since
\[
-\frac{|\xi|^2}{1+\sqrt{1-|\xi|^2}}\ge-\frac{|\xi|^2}{2\sqrt{1-|\xi|^2}}\quad\text{for every $\xi$ with $|\xi|<1$},
\]
we obtain
\[
\begin{aligned}
pJ(u_n)-J'(u_n)[u_n]&\ge\left(\frac{p}{2}-1\right)\left[\mathcal{B}_\alpha(u_n,u_n)+m^2\|u_n\|_2^2\right]-\frac{p}{8\pi}\int_{\mathbb{R}^3}\frac{|\nabla\Phi(u_n)|^2}{\sqrt{1-|\nabla\Phi(u_n)|^2/b^2}}\,{\rm d}x\\
&\quad-\left(\frac{p}{2}-1\right)\int_{\mathbb{R}^3}(\omega+\Phi(u_n))^2u_n^2\,{\rm d}x.
\end{aligned}
\]
Testing~\eqref{eq:wsol-b2} with $\psi=\Phi(u_n)$ gives
\[
-\frac{p}{8\pi}\int_{\mathbb{R}^3}\frac{|\nabla\Phi(u_n)|^2}{
\sqrt{1-|\nabla\Phi(u_n)|^2/b^2}}\,{\rm d}x=\frac{p}{2}\int_{\mathbb{R}^3}(\omega+\Phi(u_n))\Phi(u_n)u_n^2\,{\rm d}x,
\]
whence
\begin{align*}
pJ(u_n)-J'(u_n)[u_n]&\ge\left(\frac{p}{2}-1\right)\left(\mathcal{B}_\alpha(u_n,u_n)+m^2\|u_n\|_2^2\right)\\ &\quad-\frac{p\omega}{2}\int_{\mathbb{R}^3}(\omega+\Phi(u_n))u_n^2\,{\rm d}x+\int_{\mathbb{R}^3}(\omega+\Phi(u_n))^2u_n^2\,{\rm d}x\\
&\ge\left(\frac{p}{2}-1\right)\left(\mathcal{B}_\alpha(u_n,u_n)+(m^2-\omega^2)\|u_n\|_2^2\right)-\left(\frac{p}{2}-2\right)\int_{\mathbb{R}^3}\omega\Phi(u_n)u_n^2\,{\rm d}x,
\end{align*}
which is nothing but~\eqref{eq:PSstar}.

We now exploit~\eqref{eq:PSstar}, distinguishing two cases.

\paragraph{\textbf{Case (a): $p\in[4,6)$}}
Here $\Lambda=1$, so $m^2>\omega^2$. Since $\frac{p}{2}-2\ge0$ and $\Phi(u_n)\le0$, the last term in~\eqref{eq:PSstar} is nonnegative, hence
\[
pJ(u_n)-J'(u_n)[u_n]\ge\left(\frac{p}{2}-1\right)\left[\mathcal{B}_\alpha(u_n,u_n)+(m^2-\omega^2)\|u_n\|_2^2\right].
\]
By~\eqref{eq:interp}, choosing $\varepsilon_0$ as in~\eqref{eq:c1c2} (admissible since $\alpha>-\alpha_0(s,m^2-\omega^2)$), we get
\begin{equation}\label{eq:casea}
pJ(u_n)-J'(u_n)[u_n]\ge\left(\frac{p}{2}-1\right)\min\{c_1,c_2\}\|u_n\|_{H^1}^2.
\end{equation}

\paragraph{\textbf{Case (b): $p\in(2,4)$}} Here $\Lambda=\frac{2}{p-2}$, so
\[
m^2>\frac{2\omega^2}{p-2}.
\]
Now $\frac{p}{2}-2<0$ and $\Phi(u_n)\ge-\omega$, so
\[
-\left(\frac{p}{2}-2\right)\int_{\mathbb{R}^3}\omega\Phi(u_n)u_n^2\,{\rm d}x\ge\omega^2\left(\frac{p}{2}-2\right)\|u_n\|_2^2,
\]
and by~\eqref{eq:interp},
\begin{align}
pJ(u_n)-J'(u_n)[u_n]&\ge\left(\frac{p}{2}-1\right)(1-\alpha^-s\varepsilon)\|\nabla u_n\|_2^2\nonumber\\
&\quad+\left(\frac{p}{2}-1\right)\left(m^2-\alpha^-(1-s)\varepsilon^{-\frac{s}{1-s}}\right)\|u_n\|_2^2-\omega^2\|u_n\|_2^2.
\label{eq:PSlb2}
\end{align}
The system
\begin{equation}\label{eq:systemb}
1-\alpha^-s\varepsilon>0,\qquad\left(\frac{p}{2}-1\right)\left(m^2-\alpha^-(1-s)\varepsilon^{-\frac{s}{1-s}}\right)>\omega^2
\end{equation}
has a solution for some $\varepsilon=\varepsilon_1>0$ exactly when
\[
\alpha^-<\alpha_0\left(s,m^2-\frac{2\omega^2}{p-2}\right),
\]
which holds by hypothesis. By setting
\begin{equation}\label{eq:d1d2}
d_1\coloneqq\left(\frac{p}{2}-1\right)(1-\alpha^-s\varepsilon_1)>0,\qquad d_2\coloneqq\left(\frac{p}{2}-1\right)\left(m^2-\alpha^-(1-s)\varepsilon_1^{-\frac{s}{1-s}}\right)-\omega^2>0,
\end{equation}
from~\eqref{eq:PSlb2} we obtain
\begin{equation}\label{eq:caseb}
pJ(u_n)-J'(u_n)[u_n]\ge\min\{d_1,d_2\}\|u_n\|_{H^1}^2.
\end{equation}

Since $(J(u_n))_n$ is bounded and $J'|_{H^1_r}(u_n)\to0$, there are $K_1,K_2>0$ with $J(u_n)\le K_1$ and $|J'(u_n)[u_n]|\le K_2\|u_n\|_{H^1}$. By~\eqref{eq:casea} and~\eqref{eq:caseb}, $pK_1+K_2\|u_n\|_{H^1}\ge c_3\|u_n\|_{H^1}^2$ for a suitable $c_3>0$, so $(u_n)_n$ is bounded in $H^1_r(\mathbb{R}^3)$. Up to a subsequence, not relabeled, there is a function $u \in H^1_r(\mathbb{R}^3)$ such that, as $n\to\infty$
\begin{align*}
&u_n\rightharpoonup u\quad\text{in $H^1(\mathbb{R}^3)$},\\
&u_n\to u\quad\text{in $L^p(\mathbb{R}^3)$ for all $p\in (2,6)$},\\
&u_n\to u\quad\text{a.e.\ in $\mathbb{R}^3$}.
\end{align*}
We are going to show that $u_n\to u$ in $H^1(\mathbb{R}^3)$ as $n\to\infty$. 
We first show that 
\begin{equation}\label{eq:st-Phi}
\Phi(u_n)\to \Phi(u)\quad\text{in $L^\infty_{\mathrm{loc}}(\mathbb{R}^3)$ as $n\to\infty$},
\end{equation}
by distinguishing between the approximate case and the exact case.

\paragraph{\textbf{Approximate case}} Testing~\eqref{eq:wsol-a2} with $\psi=\Phi(u_n)$ and using $-\omega\le\Phi(u_n)\le0$, we get
\[
\begin{aligned}
\|\nabla\Phi(u_n)\|_2^2+\beta\|\nabla\Phi(u_n)\|_4^4 &=-4\pi\omega\int_{\mathbb{R}^3}\Phi(u_n)u_n^2\,{\rm d}x-4\pi\int_{\mathbb{R}^3}\Phi(u_n)^2u_n^2\,{\rm d}x\\ &\le-4\pi\omega\int_{\mathbb{R}^3}\Phi(u_n)u_n^2\,{\rm d}x\le4\pi\omega^2\|u_n\|_2^2,
\end{aligned}
\]
so $(\Phi(u_n))_n$ is bounded in $D(\mathbb{R}^3)$. Arguing as in the continuity part of Lemma~\ref{lem:Phiu}, with $u_n,u$ in place of the approximating sequence, one finds
\begin{align*}
&\|\nabla \Phi(u_n)-\nabla\Phi(u)\|_2^2+\frac{\beta}{4}\|\nabla \Phi(u_n)-\nabla\Phi(u)\|_4^4\\
&\le 4\pi\int_{\mathbb{R}^3}(\omega+\Phi(u))(\Phi(u_n)- \Phi(u))(u^2-u_n^2)\,{\rm d}x\\
&\le 4\pi\omega\|\Phi(u_n)- \Phi(u)\|_6\|u+u_n\|_2\|u-u_n\|_3\le C\|u-u_n\|_3,
\end{align*}
for a constant $C>0$ independent of $n$. Since $u_n\to u$ in $L^3(\mathbb{R}^3)$ as $n\to\infty$, we deduce
\[
\Phi(u_n)\to \Phi(u)\quad\text{in $D(\mathbb{R}^3)$ as $n\to\infty$},
\]
which yields~\eqref{eq:st-Phi}, in view of Proposition~\ref{prop:embedding}.

\paragraph{\textbf{Exact case}} From~\eqref{eq:L2-bound} there exists a constant $C>0$ such that
\begin{align*}
\|\nabla\Phi(u_n)\|_2^2\le 4\pi\omega^2\|u_n\|_2^2,
\end{align*}
which implies that $(\Phi(u_n))_n$ is bounded in $D^{1,2}(\mathbb{R}^3)$. Hence, there exists $\phi\in D^{1,2}(\mathbb{R}^3)$ and a not relabeled subsequence such that
\[
\Phi(u_n)\rightharpoonup \phi \quad\text{in $D^{1,2}(\mathbb{R}^3)$ as $n\to\infty$}.
\]
By Lemma~\ref{lem:weak-closed} we get $\phi\in M_b(\mathbb{R}^3)$. By Lemma~\ref{lem:conv-unif} we derive
\begin{equation}\label{eq:Linfty-aa}
\Phi(u_n)\to \phi \quad\text{in $L^\infty_{\mathrm{loc}}(\mathbb{R}^3)$ as $n\to\infty$}.
\end{equation}
Let us show that $\phi=\Phi(u)$. For all $n\in \mathbb{N}$ and $\psi\in M_b(\mathbb{R}^3)$ we have that
\[
F_{u_n}(\Phi(u_n))\le F_{u_n}(\psi),
\]
which yields, simplifying the term $\frac{\omega^2}{2}\int_{\mathbb{R}^3}u_n^2\,{\rm d}x$,
\begin{align*}
&\frac{b^2}{4\pi}\int_{\mathbb{R}^3}\left(1-\sqrt{1-\frac{|\nabla\Phi(u_n)|^2}{b^2}}\right){\rm d}x+\int_{\mathbb{R}^3}\omega\Phi(u_n)u_n^2\,{\rm d}x+\frac{1}{2}\int_{\mathbb{R}^3}\Phi(u_n)^2u_n^2\,{\rm d}x\\
&\le \frac{b^2}{4\pi}\int_{\mathbb{R}^3}\left(1-\sqrt{1-\frac{|\nabla\psi|^2}{b^2}}\right){\rm d}x+\int_{\mathbb{R}^3}\omega\psi u_n^2\,{\rm d}x+\frac{1}{2}\int_{\mathbb{R}^3}\psi^2u_n^2\,{\rm d}x.
\end{align*}
Since $u_n^2\to u^2$ in $L^{6/5}(\mathbb{R}^3)$ as $n\to\infty$ and $\psi,\psi^2\in L^6(\mathbb{R}^3)$, H\"older's inequality gives
\begin{align*}
&\lim_{n\to\infty}\left[\frac{b^2}{4\pi}\int_{\mathbb{R}^3}\left(1-\sqrt{1-\frac{|\nabla\psi|^2}{b^2}}\right){\rm d}x+\int_{\mathbb{R}^3}\omega\psi u_n^2\,{\rm d}x+\frac{1}{2}\int_{\mathbb{R}^3}\psi^2u_n^2\,{\rm d}x\right]\\
&=\frac{b^2}{4\pi}\int_{\mathbb{R}^3}\left(1-\sqrt{1-\frac{|\nabla\psi|^2}{b^2}}\right){\rm d}x+\int_{\mathbb{R}^3}\omega\psi u^2\,{\rm d}x+\frac{1}{2}\int_{\mathbb{R}^3}\psi^2u^2\,{\rm d}x,
\end{align*}
while weak lower semicontinuity of the gradient term, as in Lemma~\ref{lem:weak-semi}, yields the following inequality
\begin{align*}
&\frac{b^2}{4\pi}\int_{\mathbb{R}^3}\left(1-\sqrt{1-\frac{|\nabla\phi|^2}{b^2}}\right){\rm d}x+\int_{\mathbb{R}^3}\omega\phi u^2\,{\rm d}x+\frac{1}{2}\int_{\mathbb{R}^3}\phi^2u^2\,{\rm d}x\\
&\le \liminf_{n\to\infty}\left[\frac{b^2}{4\pi}\int_{\mathbb{R}^3}\left(1-\sqrt{1-\frac{|\nabla\Phi(u_n)|^2}{b^2}}\right){\rm d}x+\int_{\mathbb{R}^3}\omega\Phi(u_n)u_n^2\,{\rm d}x+\frac{1}{2}\int_{\mathbb{R}^3}\Phi(u_n)^2u_n^2\,{\rm d}x\right].
\end{align*}
Here the potential terms converge by $u_n^2 \to u^2$ in $L^{6/5}(\mathbb{R}^3)$ as $n\to\infty$, the uniform $L^6$ bound on $\Phi(u_n)$, local uniform convergence, and $-\omega\le \Phi(u_n)\le 0$. Thus, by adding the term $\frac{\omega^2}{2}\int_{\mathbb{R}^3}u^2\,{\rm d}x$, we conclude
\[
F_u(\phi)\le F_u(\psi)\quad\text{for all $\psi\in M_b(\mathbb{R}^3)$}.
\]
By the uniqueness of the minimizer, we conclude that $\phi=\Phi(u)$, which gives~\eqref{eq:st-Phi}.

As a consequence of~\eqref{eq:st-Phi}, we derive that for all $v\in H^1(\mathbb{R}^3)$
\[
J'(u_n)[v]\to J'(u)[v]\quad\text{as $n\to\infty$}.
\]
Since $(u_n)_n$ is a Palais--Smale sequence for the radial restriction, $u$ is a critical point of $J|_{H^1_r(\mathbb{R}^3)}$. By Lemma~\ref{lem:sym-crit}, $u$ is a critical point of $J$. Thus
\begin{equation}\label{eq:der-J2-0}
J'(u)=\mathcal L_\alpha u+(m^2-\omega^2) u-2\omega\Phi(u)u-\Phi(u)^2u-|u|^{p-2}u=0\quad\text{in $H^{-1}(\mathbb{R}^3)$}.
\end{equation}
We now show that $u_n\to u$ in $H^1(\mathbb{R}^3)$ as $n\to\infty$. By~\eqref{eq:der-J}, for all $n\in\mathbb{N}$ we have
\begin{equation}\label{eq:der-J2un}
J'(u_n)=\mathcal{L}_\alpha u_n + (m^2-\omega^2)u_n-2\omega\Phi(u_n)u_n-\Phi(u_n)^2u_n-|u_n|^{p-2}u_n\quad\text{in $H^{-1}(\mathbb{R}^3)$}.
\end{equation}
Since
\[
\alpha> -\alpha_0\left(s,m^2-\max\left\{\frac{2}{p-2},1\right\}\omega^2\right)\ge -\alpha_0(s,m^2-\omega^2),
\]
the operator $\mathcal{M}_\alpha \colon H^1(\mathbb{R}^3)\to H^{-1}(\mathbb{R}^3)$, defined for $u\in H^1(\mathbb{R}^3)$ as
\[
\mathcal{M}_\alpha u\coloneqq \mathcal{L}_\alpha u + (m^2-\omega^2)u\quad\text{in $H^{-1}(\mathbb{R}^3)$},
\]
is coercive, since
\[
\langle\mathcal{M}_\alpha u,u\rangle_{H^{-1}\times H^1}\ge \min\{c_1,c_2\}\|u\|_{H^1}^2\quad\text{for all $u\in H^1(\mathbb{R}^3)$},
\]
where $c_1,c_2>0$ are the constants defined in~\eqref{eq:c1c2}. Moreover, as $n\to\infty$,
\[
|u_n|^{p-2}u_n\to |u|^{p-2}u\quad\text{in $L^{p'}(\mathbb{R}^3)$}.
\]

By Sobolev's inequality~\eqref{eq:Sob}, H\"{o}lder's inequality $\|uv\|_2\le \|u\|_6\|v\|_3,$ and the Dominated Convergence Theorem, using~\eqref{eq:st-Phi}, we obtain as $n\to\infty$
\begin{align*}
\|\Phi(u_n)u_n-\Phi(u)u\|_2&\le S\|\nabla\Phi(u_n)\|_2\|u_n-u\|_3+\|\Phi(u_n)u-\Phi(u)u\|_2\to 0,\\
\|\Phi(u_n)^2u_n-\Phi(u)^2u\|_2&\le S\omega\|\nabla \Phi(u_n)\|_2\|u_n-u\|_3+2\omega\|(\Phi(u_n)-\Phi(u))u\|_2\to 0.
\end{align*}
Subtracting~\eqref{eq:der-J2-0} from~\eqref{eq:der-J2un} and testing with $u_n-u$, we derive as $n\to\infty$
\begin{align*}
\min\{c_1,c_2\}\|u_n-u\|_{H^1}^2&\le |J'(u_n)[u_n-u]|+2\omega\|\Phi(u_n)u_n-\Phi(u)u\|_2\|u_n-u\|_2\\
&\quad+\|\Phi(u_n)^2u_n-\Phi(u)^2u\|_2\|u_n-u\|_2\\
&\quad+\||u_n|^{p-2}u_n-|u|^{p-2}u\|_{p'}\|u_n-u\|_p\to 0.
\end{align*}
Indeed, $(u_n-u)_n$ is bounded and radial, so the first term tends to zero by the Palais--Smale assumption on $H^1_r(\mathbb{R}^3)$. The remaining terms tend to zero by the convergences just proved. Hence $u_n\to u$ in $H^1(\mathbb{R}^3)$ as $n\to\infty$, which proves the (PS) condition.
\end{proof}

Having verified the geometric conditions (Proposition~\ref{prop:geometry}) and the Palais--Smale condition (Proposition~\ref{prop:PS}), we are now ready to apply Theorem~\ref{thm:Rabinowitz} and prove our main results.

\begin{proof}[Proof of Theorem~\ref{thm:main}]
Under the stated hypotheses, $J|_{H^1_r(\mathbb{R}^3)}$ is an even $C^1$ functional with $J(0)=0$, and by Propositions~\ref{prop:geometry} and~\ref{prop:PS} it satisfies conditions $(i)$--$(iii)$ of Theorem~\ref{thm:Rabinowitz} (with $X=H^1_r(\mathbb{R}^3)$, $X_1=\{0\}$ and $X_2=X$). Hence $J|_{H^1_r(\mathbb{R}^3)}$ possesses an unbounded sequence of critical values, and therefore infinitely many critical points $(u_n)_n\subset H^1_r(\mathbb{R}^3)$. By Lemma~\ref{lem:sym-crit} each $u_n$ is a critical point of $J$, and by Lemma~\ref{lem:J-C1} the pairs $(u_n,\Phi(u_n))$ are radially symmetric weak solutions of~\eqref{eq:KGBI} in $H^1(\mathbb{R}^3)\times D(\mathbb{R}^3)$ (approximate case, part $(i)$) and of~\eqref{eq:NKGBI} in $H^1(\mathbb{R}^3)\times K_b(\mathbb{R}^3)$ (exact case, part $(ii)$), respectively.
\end{proof}

\section*{Statements and Declarations}

\noindent {\bf Conflict of interest} The authors declare that they have no conflict of interest.

\bigskip

\noindent {\bf Acknowledgements.} The authors M.\ Caponi and E.\ Vitillaro are members of GNAMPA - Istituto Nazionale di Alta Matematica (INdAM). M.\ Caponi acknowledges the support of the INdAM - GNAMPA 2026 Project ``Analisi variazionale per operatori locali e nonlocali possibilmente singolari o degeneri'' (CUP: E53C25002010001). M.\ Caponi has been funded by the European Union - NextGenerationEU under the Italian Ministry of University and Research (MUR) National Centre for HPC, Big Data and Quantum Computing (CN\_00000013 - CUP: E13C22001000006).

\end{document}